\long\def\@savemarbox#1#2{\global\setbox#1\vtop{\hsize\marginparwidth 
  \@parboxrestore\tiny\raggedright #2}}
\newcommand\lref[1]{\ref{#1}%
\@ifundefined{r@DisplaY #1}{}{ (#1)}}
\newcommand\fakelabel[2]{\@bsphack\if@filesw {\let\thepage\relax
   \newcommand\protect{\noexpand\noexpand\noexpand}%
\xdef\@gtempa{\write\@auxout{\string
      \newlabel{#1}{{#2}{\thepage}}}}}\@gtempa
   \if@nobreak \ifvmode\nobreak\fi\fi\fi\@esphack}
\def\SL@margintext#1{{\showlabelsetlabel{\tiny\{\SL@prlabelname{#1}\}}}}
\def\Empty{}
\newcommand\oplabel[1]{
  \def\OpArg{#1} \ifx \OpArg\Empty {} \else
        \label{#1}
  \fi}
\newtheorem{theoremSt}{Theorem}[section]
\newtheorem{exampleSt}[theoremSt]{Example}
\newtheorem{exerciseSt}[theoremSt]{Exercise}
\newcommand\MakeStEnv[1]{
  \newenvironment{#1}[1]{
  \begin{#1St} \oplabel{##1}%
  \global\def\CrntSt{\thetheoremSt}%
}{ 
  \end{#1St} }
  \newenvironment{#1+}[1]{
  \begin{#1St} \label{##1}%
  \label{DisplaY ##1}%
  \global\def\CrntSt{\thetheoremSt}%
  \def\Labl{##1}\ifx\Labl\Empty{} \else {\em (\Labl)\,}\fi%
}{ 
  \end{#1St} }
}
\long\def\realfig#1#2{
\begin{figure}[htbp]
\includegraphics{#1}
\caption[#1]{#2}
\oplabel{#1}
\end{figure}}
\newlength{\saveu}
\newenvironment{pf*}[1]{%
 \begin{proof}[#1]%
}{ 
 \end{proof}
}
\newcommand{\finishproof}[1]{ 
  \def\FPArg{#1}
  \ifx\FPArg\Empty
        \newcommand\FPArg{\CrntSt}  \fi
  \smallbreak\noindent\makebox[\textwidth]{\hfill\fbox{\FPArg}}
  \medbreak\noindent
}
\newcommand\AAA{{\mathcal A}}
\newcommand\BB{{\mathcal B}}
\newcommand\CC{{\mathcal C}}
\newcommand\EE{{\mathcal E}}
\newcommand\FF{{\mathcal F}}
\newcommand\HH{{\mathcal H}}
\newcommand\JJ{{\mathcal J}}
\newcommand\LL{{\mathcal L}}
\newcommand\MM{{\mathcal M}}
\newcommand\NN{{\mathcal N}}
\newcommand\PP{{\mathcal P}}
\newcommand\RR{{\mathcal R}}
\newcommand\TT{{\mathcal T}}
\newcommand\CH{{\CC\HH}}
\newcommand\PMF{{\PP\kern-2pt\MM\FF}}
\newcommand\PML{{\PP\kern-2pt\MM\LL}}
\newcommand\ep{\epsilon}
\newcommand\hhat{\widehat}
\newcommand\union{\cup}
\newcommand\intersect{\cap}
\newcommand\bbR{{\mathord{\text{I\kern-2pt R}}}}        
\newcommand\bbH{{\mathord{\text{I\kern-2pt H}}}}        
\newcommand\C{{\mathbb C}}
\newcommand\R{{\mathbb R}}
\newcommand\Hyp{{\mathbb H}}
\newcommand\PSL[1]{\text{PSL}_{#1}}
\newcommand\bigrightarrow[1]{\hbox to #1{\rightarrowfill}}
\newcommand\bigleftarrow[1]{\hbox to #1{\leftarrowfill}}
\newcommand\homeo{\cong}
\newcommand\boundary{\partial}
\newcommand\semidir{\mathrel{\hbox{\vrule depth-.03ex height1.1ex\kern-0.15em$\times$}}}
\newcommand\del{\nabla}
\newcommand{\ssm}{\smallsetminus}
\numberwithin{equation}{section}
\def\subsection{\@startsection{subsection}{2}%
  \z@{.5\linespacing\@plus.7\linespacing}{.5em}%
  {\normalfont\bfseries\centering}}
\def\section{\@startsection{section}{1}%
  \z@{.7\linespacing\@plus\linespacing}{.5\linespacing}%
  {\normalfont\large\bfseries\centering}}
\def\subsubsection{\@startsection{subsubsection}{3}%
  \z@{.5\linespacing\@plus.7\linespacing}{-.5em}%
  {\normalfont\bfseries}}
\newcommand{\fsubd}{\mathrel{{\scriptstyle\searrow}\kern-1ex^d\kern0.5ex}}
\newcommand{\bsubd}{\mathrel{{\scriptstyle\swarrow}\kern-1.6ex^d\kern0.8ex}}
\newcommand{\fsubeq}{\mathrel{\raise-.7ex\hbox{$\overset{\searrow}{=}$}}}
\newcommand{\bsubeq}{\mathrel{\raise-.7ex\hbox{$\overset{\swarrow}{=}$}}}
\newcommand{\base}{\operatorname{base}}
\newcommand{\bbar}{\overline}
\newcommand{\tsh}[1]{\left\{\kern-.9ex\left\{#1\right\}\kern-.9ex\right\}}
\newcommand\MT{{\mathbb T}}
\begin{document}

\title{Windows, cores and skinning maps}

\author{Jeffrey F. Brock}
\address{Brown University}
\author{Kenneth W.  Bromberg}
\address{University of Utah}
\author{Richard D.  Canary}
\address{University of Michigan}
\author{Yair N. Minsky}
\address{Yale University}
\date{\today}
\thanks{Brock was partially supported by NSF grant DMS-1207572, Bromberg was partially supported by NSF grants 
DMS-1207873 and DMS-1509171,
Canary  was partially
supported by NSF grant \hbox{DMS -1306992}, and Minsky was partially supported by NSF grant
DMS-1311844}
\begin{abstract}
We give a generalization of Thurston's Bounded Image Theorem for
skinning maps, which applies to pared 3-manifolds with incompressible
boundary that are not necessarily acylindrical. Along the way we study
properties of divergent sequences in the deformation space of such a
manifold, establishing the existence of compact cores satisfying
a certain notion of uniform geometry.
\end{abstract}

\maketitle



\long\def\vfig#1#2#3{
\begin{figure}[htbp]
\includegraphics[height=#1]{#2}
\caption[#2]{#3}
\oplabel{#2}
\end{figure}}

\newcommand\epzero{\ep_0}
\newcommand\epone{\ep_1}
\newcommand\epotal{\ep_{\rm u}}
\newcommand\kotal{k_{\rm u}}
\newcommand\Kmodel{K_0}
\newcommand\Kone{K_1}
\newcommand\Ktwo{K_2}
\newcommand\bdry{\partial} 
\newcommand\stab{\operatorname{stab}}
\newcommand\nslices[2]{#2|_{#1}}
\newcommand\ME{M\kern-4pt E}
\newcommand\bME{\overline{M\kern-4pt E}}
\renewcommand\del{\partial}
\newcommand\s{{\mathbf s}}
\newcommand\pp{{\mathbf p}}
\newcommand\qq{{\mathbf q}}
\newcommand\uu{{\mathbf u}}
\newcommand\vv{{\mathbf v}}
\newcommand\zero{{\mathbf 0}}
\newcommand{\cB}{{\mathcal B}}
\newcommand\mm{\operatorname{\mathbf m}}
\newcommand\intM{\accentset{\circ}{M}}
\newcommand\intP{\accentset{\circ}{P}}
\newcommand\tc[1]{\accentset{\circ}{#1}}
\renewcommand{\CH}{{\mathcal C}\kern-2pt\mathit{H}}

\section{Introduction}

Critical to Thurston's Geometrization Theorem for Haken 3-manifolds
was a fixed-point problem, phrased for a self-mapping of the
deformation space of a hyperbolic 3-manifold with boundary. This {\em
  skinning map} implicitly describes how to enhance a topological gluing
of a 3-manifold along its boundary with geometric information; a fixed
point for the skinning map realizes a geometric solution to the gluing
problem, resulting in a hyperbolic structure on the gluing.

Beyond its utility in geometrization, the map itself reveals more
quantitatively the relationship between topological and geometric
features of a hyperbolic 3-manifold.  Indeed, Thurston's Bounded Image
Theorem (see Thurston \cite{thurston-bangor}, Morgan \cite{Morgan},
Kent \cite{kent-skin}) which provides the desired fixed-point,
guarantees that for an acylindrical 3-manifold, the image of the
skinning map is a bounded subset of Teichm\"uller space.  In this
paper, we investigate the skinning map in the more general case when
the 3-manifold is only assumed to have incompressible boundary.  Here,
the image of the skinning map need not be bounded, but the restriction
onto any essential subsurface of the boundary that is homotopic off of
the characteristic submanifold is bounded. One may think of this as a
strong form of Thurston's Relative Compactness Theorem.  Along the
way, we refine our understanding of the interior geometry of a
hyperbolic 3-manifold, establishing existence of a uniformly
controlled family of compact cores for each deformation space of such
manifolds.

\medskip

Let $M$ be a compact orientable hyperbolizable 3-manifold whose boundary is incompressible
and contains no tori and let $CC_0(M)$ denote the set of convex cocompact
hyperbolic structures on the interior of $M$. 
A convex cocompact hyperbolic structure $N_\rho$ on the interior of $M$ 
gives rise to a holonomy representation $\rho:\pi_1(M)\to {\rm PSL}(2,\mathbb C)$ and induces a well-defined
conformal structure on its boundary $\partial M$.
Bers \cite{bers-survey} shows that one obtains an
identification of $CC_0(M)$ with $\TT(\boundary M)$, the
Teichm\"uller space of conformal structures on $\boundary M$. The skinning map
$$
\sigma_M : CC_0(M) \to \TT(\bbar{\boundary M})
$$
records the asymptotic geometry of the ``inward-pointing'' end of the cover associated to each boundary component.
If $M$ has connected boundary $S$ and $N_\rho$ is in $CC_0(M)$, then the cover $N_S$ of $N_\rho$
associated to $\pi_1(S)$ is
quasifuchsian, i.e. a point in $CC_0(S\times [0,1])$, so  may be identified
to a point $(X,Y)\in\TT(S)\times \TT(\bbar S)$. Then, $\sigma_M(\rho)=Y$ and $X$ is the point
in $\TT(S)$ associated to $\rho$ by the Bers parametrization. (The skinning map will be defined
more carefully, and in greater generality, in Section \ref{deformation}.)

A compact, orientable, hyperbolizable 3-manifold $M$ is said to be {\em acylindrical} if it contains no essential annuli, or,
equivalently, if $\pi_1(M)$ does not admit a non-trivial splitting over a cyclic subgroup.
In this setting, Thurston's Bounded Image Theorem has
the following form.

\medskip\noindent
{\bf Thurston's Bounded Image Theorem:} {\em If $M$ is a compact,
  orientable, acylindrical, hyperbolizable 3-manifold with no torus
  boundary components, 
then the skinning map $\sigma_M : CC_0(M) \to \TT(\bbar{\boundary M})$ has bounded image.}

\medskip

The skinning map has been studied extensively when $M$ is acylindrical.  This study has focussed on
obtaining bounds on the diameter of the skinning map in terms of the topology of $M$ and the geometry
of its unique hyperbolic metric with totally geodesic boundary, see Kent \cite{kent-skin} and Kent-Minsky \cite{kent-minsky}.
In the case that $M$ is not required to be acylindrical, it is known that the skinning map is non-constant
(Dumas-Kent \cite{dumas-kent1}) and that its image is Zariski dense in the  appropriate 
character variety (Dumas-Kent \cite{dumas-kent2}).
Recently, Dumas  \cite{dumas-finite}  has shown that the skinning map is finite-to-one.

In order to state our generalization  of Thurston's Bounded Image Theorem to the setting where
$M$ is only assumed to have incompressible boundary, we recall that
the {\em characteristic submanifold}  $\Sigma(M)$ is a minimal collection of 
solid and thickened tori and  interval bundles in $M$ whose frontier  is a collection
of essential annuli such that every (embedded) essential annulus in  $M$ is isotopic into $\Sigma(M)$
(see Johannson \cite{johannson} or Jaco-Shalen \cite{JS}).
Thurston \cite{thurstonIII} defines the {\em window} of $M$ to be the union of the
interval bundles in $\Sigma(M)$, together with a regular neighborhood of each
component of the frontier of $\Sigma(M)$ which is not homotopic into an interval bundle.
Let $\boundary_{nw}M$ denote the
intersection of $\boundary M$ with the complement of the
window. Components of $\boundary_{nw}M$ are either components of the
intersection of $\boundary M$ with the (relatively) acylindrical pieces of the
decomposition, or annuli in the boundaries of the solid or thickened tori pieces.

Our main theorem asserts that any curve in $\boundary_{nw}M$, equivalently any curve  in $\partial M$ which
may be homotoped off the characteristic submanifold,
has uniformly bounded length in the hyperbolic structures which arise in the skinning image.

\begin{theorem}{main gen}
Let $M$ be a compact, orientable, hyperbolizable 3-manifold
whose boundary is incompressible.
For each curve $\alpha$ in $\boundary_{nw} M$,
its length $\ell_Y(\alpha)$ is bounded as $Y$ varies over the image of $\sigma_M$.
\end{theorem}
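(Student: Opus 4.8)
The plan is to argue by contradiction: suppose there is a curve $\alpha$ in $\boundary_{nw} M$ and a sequence $\rho_n \in CC_0(M)$ such that $\ell_{Y_n}(\alpha) \to \infty$, where $Y_n = \sigma_M(\rho_n)$. We want to extract geometric information from the covers $N_S$ (for $S$ the boundary component containing $\alpha$) and feed it back into the geometry of $N_{\rho_n}$ itself. The key point is that $\alpha$ is, by hypothesis, homotopic off the characteristic submanifold, so its geodesic representative in $N_{\rho_n}$ must ``go deep'' into the manifold — in particular it cannot be trapped in a window or in a Margulis tube. First I would recall from Section~\ref{deformation} the precise relation between the skinning image and the covers: if $\ell_{Y_n}(\alpha) \to \infty$, then in the quasifuchsian cover $N_S$ the curve $\alpha$ is being pinched on the $\bbar S$ side, so a short geodesic of length $\to 0$ representing $\alpha$ appears near that end of $N_S$; equivalently, $\alpha$ has arbitrarily short geodesic length in $N_{\rho_n}$ itself only if one projects to the wrong cover — rather, the correct statement is that the convex core of $N_S$ degenerates, and by covering-space and geometric-limit arguments this forces degeneration in $N_{\rho_n}$ along $\alpha$.

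The heart of the argument should use the uniform compact cores constructed earlier in the paper. Pass to a subsequence so that $(N_{\rho_n})$ converges geometrically (after suitable basepoints) to a limit manifold $N_\infty$, and so that the uniform compact cores $C_n \subset N_{\rho_n}$ converge to a compact core $C_\infty$ of $N_\infty$ with controlled geometry. The curve $\alpha$, living in $\boundary_{nw} M \subset \boundary C_n$, persists in the limit: because $\alpha$ can be homotoped off $\Sigma(M)$, its image in $C_\infty$ is an essential curve that is \emph{not} homotopic into the characteristic submanifold of $C_\infty$, hence not homotopic into any cusp or Margulis tube of $N_\infty$ that arises from an essential annulus. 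Therefore $\alpha$ has a geodesic representative in $N_\infty$ of definite positive length and the same is true, uniformly, in $N_{\rho_n}$ for large $n$; call this uniform lower bound $\ell_0 > 0$. (The annular/window pieces are precisely where curves \emph{can} degenerate, and the defining property of $\boundary_{nw} M$ excludes $\alpha$ from those pieces.)

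Finally I would convert the lower bound on $\ell_{N_{\rho_n}}(\alpha)$ into an upper bound on $\ell_{Y_n}(\alpha)$, contradicting the assumption. The mechanism is the standard comparison between the intrinsic geometry of a hyperbolic $3$-manifold and the geometry of a covering: the cover $N_S \to N_{\rho_n}$ restricted to a neighborhood of (the preimage of) the geodesic representative of $\alpha$ is a local isometry, so a bounded-length geodesic for $\alpha$ downstairs lifts to a bounded-length essential loop upstairs, which in turn bounds the distance in $N_S$ from the $X_n$-end to the $Y_n$-end along $\alpha$ and hence, via the Bers inequality (or the collar lemma on the $\bbar S$ conformal boundary), bounds $\ell_{Y_n}(\alpha)$ from above by a function of $\ell_0$ and the diameter of $C_\infty$.

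\medskip

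The main obstacle I expect is the second step: showing that $\alpha$ does not degenerate in the geometric limit — i.e., that a curve which is topologically off the characteristic submanifold cannot become short in a divergent sequence of convex cocompact structures. This is exactly the place where the ``uniform geometry'' of the compact cores (advertised in the abstract and presumably established earlier) must be invoked: one needs that the family $C_n$ sits inside $N_{\rho_n}$ with uniformly bounded geometry away from a controlled collection of tubes and windows, and that $\alpha$ is disjoint (up to homotopy) from all of these. Making this precise requires matching the abstract characteristic submanifold $\Sigma(M)$ with the geometric decomposition of $N_\infty$ into thick part, Margulis tubes, and (geometric) windows, and checking that an essential annulus in $N_\infty$ witnessing the degeneration of $\alpha$ would pull back to an essential annulus in $M$ not isotopic into $\Sigma(M)$, contradicting the defining property of the characteristic submanifold.
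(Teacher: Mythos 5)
Your proposal correctly identifies two of the paper's ingredients — the uniform compact cores, and the fact that $\alpha$ being homotopic off $\Sigma(M)$ prevents it from degenerating or being trapped in a window — but the final step, where you ``convert the lower bound on $\ell_{N_{\rho_n}}(\alpha)$ into an upper bound on $\ell_{Y_n}(\alpha)$ \ldots via the Bers inequality (or the collar lemma),'' rests on an inequality that does not exist. Bers' inequality runs in the opposite direction: it bounds the length of the geodesic representative \emph{inside} the $3$-manifold from above by (twice) its length in the conformal boundary. There is no converse. A fixed curve can have geodesic length bounded above and below in $N_S$ while its length in the bottom end invariant $\nu^-$ tends to infinity — this is exactly what happens for a sequence of quasifuchsian groups $QF(X,Y_n)$ with $Y_n$ leaving every compact set. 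So controlling the length of $\alpha$ itself in the interior, even together with a bound on the diameter of the core, gives no upper bound on $\ell_{Y_n}(\alpha)$. (Relatedly, your opening claim that $\ell_{Y_n}(\alpha)\to\infty$ forces $\alpha$ to be ``pinched'' near the $\bbar S$ end is backwards: pinching is length tending to $0$, and it is short length at infinity, not long length, that propagates into the manifold.)

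What is actually needed — and what the paper devotes Section~\ref{length bounds} to — is a criterion (Theorem~\ref{upper bound}) that bounds $\ell_{\nu^-}(\alpha)$ in terms of the collection $\mathcal C(\hhat\alpha,L)$ of \emph{all} bounded-length geodesics in the cover $N_S$ that cross $\alpha$ and do not lie above a level representative $\hhat\alpha$. Large length of $\alpha$ at infinity is caused by large subsurface projection distances $d_W(\alpha,\nu^-)$, which manifest as many bounded-length hierarchy curves sitting between $\hhat\alpha$ and the bottom end; bounding their number and giving them a uniform lower length bound is what yields the upper bound on $\ell_{\nu^-}(\alpha)$. The role of the uniform core and the characteristic-submanifold argument in the paper is precisely to show that every such curve must pass through the uniformly bounded piece $f(M_\AAA)$ of the core (any curve avoiding it would live in a complementary region and hence be homotopic into the window, so could not cross $\alpha$), which caps the cardinality of $\mathcal C(\hhat\alpha,L)$ and bounds its elements' lengths from below. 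Without a result of the type of Theorem~\ref{upper bound}, or some substitute controlling the geometry \emph{between} $\hhat\alpha$ and the bottom end rather than just at $\hhat\alpha$, the argument cannot close.
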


If $W$ is an essential subsurface of $\boundary M$, then 
$\sigma_M$ induces a map
$$
\sigma_M^W : CC_0(M) \to \mathcal{F}(W)
$$
where $\mathcal{F}(W)$ is the Fricke space of all hyperbolic
structures on the interior of $W$.
Notice that these
hyperbolic structures are allowed to have either finite or infinite
area. Our main
theorem immediately translates to the fact that $\sigma_M^W$ has bounded image:

\begin{corollary}{maincor gen}
Let $M$ be a compact, orientable, hyperbolizable 3-manifold
whose boundary is incompressible.
If $W$ is a component of  $\boundary_{nw} M$, then
the image of $\sigma_M^W$ is bounded in $\mathcal{F}(W)$.
\end{corollary}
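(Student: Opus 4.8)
The plan is to read off Corollary \ref{maincor gen} from Theorem \ref{main gen} together with the properness of the Fricke space with respect to curve-length functions; as the remarks preceding the corollary indicate, the statement is essentially a change of language, and the one thing to do is to keep track of that change.

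First I would recall the construction of $\sigma_M^W$ (carried out in Section \ref{deformation}): for $\rho\in CC_0(M)$ the point $\sigma_M^W(\rho)\in\mathcal F(W)$ is the hyperbolic structure induced on $\mathrm{int}(W)$ by passing to the cover, associated to the subgroup $\pi_1(W)$, of the surface carrying $\sigma_M(\rho)$. Since $\pi_1(W)$ is finitely generated this cover is of finite type, and the covering projection onto the surface carrying $\sigma_M(\rho)$ is a local isometry. Hence any essential closed curve $\alpha$ carried by $W$ lifts to a closed loop representing its own class in the cover, its geodesic representative lifts to the geodesic representative of that class upstairs, and therefore
\[
  \ell_{\sigma_M^W(\rho)}(\alpha)\;=\;\ell_{\sigma_M(\rho)}(\alpha)\;=\;\ell_{Y}(\alpha),\qquad Y=\sigma_M(\rho).
\]
Applying Theorem \ref{main gen} to the curves $\alpha$ lying in $W$ (hence in $\partial_{nw}M$), I obtain for each such $\alpha$ a constant $B(\alpha)$, independent of $\rho$, with $\ell_{\sigma_M^W(\rho)}(\alpha)\le B(\alpha)$.

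Second I would take the finite collection of essential simple closed curves in $W$ consisting of the curves of a pants decomposition of $W$, a curve dual to each pants curve, and the components of $\partial W$. By the first step every curve in this collection has $\sigma_M^W(\rho)$-length bounded above, uniformly in $\rho$. The standard properness of $\mathcal F(W)$ then finishes the argument: working in Fenchel-Nielsen-type coordinates on $\mathcal F(W)$, the upper bounds on the dual curves give, via the collar lemma, lower bounds on the lengths of the pants curves and upper bounds on their twist parameters, while the upper bounds on the components of $\partial W$ bound the core lengths of the funnels. (No lower bound on the funnel core lengths is needed, since a funnel degenerating to a cusp remains inside $\mathcal F(W)$; and when $W$ is an annulus the collection reduces to the core curve of $W$.) Hence $\sigma_M^W(\rho)$ stays in a bounded region of $\mathcal F(W)$, which is the assertion.

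The real difficulty lies entirely in Theorem \ref{main gen}; within the corollary there is essentially none, but two points are worth spelling out. One is the identification in the first step of $\sigma_M^W(\rho)$ as a subsurface cover of $\sigma_M(\rho)$, which is what forces the lengths of curves supported in $W$ to agree with the corresponding lengths downstairs exactly, rather than only up to a Schwarz lemma constant (and in the wrong direction, at that). The other is that the curves one needs in order to pin down a point of $\mathcal F(W)$ --- in particular the components of $\partial W$, which are the frontier curves of the characteristic submanifold --- do count as curves lying ``in $\partial_{nw}M$'' in the sense of Theorem \ref{main gen}. This holds because the window contains only a collar neighbourhood of each frontier annulus, so a curve isotopic to a component of $\partial W$ may be pushed slightly off the characteristic submanifold onto the acylindrical side, where it lies in $\partial_{nw}M$.
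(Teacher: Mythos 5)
Your proposal is correct and follows essentially the same route as the paper: the paper's proof (given in Section 5 for the pared version) also handles the annulus case directly and otherwise applies Theorem \ref{main gen} to a finite system of curves in $W$ whose bounded lengths confine a point of $\mathcal F(W)$ to a compact set, the only cosmetic difference being that the paper invokes a filling system of non-peripheral curves where you use a pants decomposition together with duals and the components of $\partial W$.
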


Note that these statements allow $\boundary M$ to have torus components, and
moreover the theorem applies more
generally, when we consider the space $AH_0(M)$ of all hyperbolic structures on the interior of  $M$.
Given an element of $AH_0(M)$ with holonomy
representation $\rho:\pi_1(M)\to PSL(2,\C)$ and quotient manifold $N_\rho$, 
we obtain an end invariant
$\sigma_M(\rho)$ on the non-toroidal part of the boundary,  which
records the asymptotic geometry of the inward-pointing end of the
covers of $N_\rho$ associated
to those boundary components of $M$. This ending invariant consists of a multicurve, known as the parabolic locus,
and either a finite area hyperbolic structure or a filling lamination on each component of the complement.  If a curve 
$\alpha$ on $ \partial M$ is homotopic into a component of the complement of the parabolic locus which has a hyperbolic
structure, then $\ell_{\sigma_M(\rho)}(\alpha)$ denotes the length of the geodesic representative of $\alpha$ in
this hyperbolic structure; if $\alpha$ is homotopic into the parabolic locus then $\ell_{\sigma_M(\rho)}(\alpha)=0$; and
otherwise $\ell_{\sigma_M(\rho)}(\alpha)=+\infty$. 

With these definitions, Theorem \ref{main gen} continues to hold. In
addition, $\sigma_M^W$ can still be defined on $AH_0(M)$ when $W$ is a component of
$\boundary_{nw}M$, and Corollary
\ref{maincor gen} holds as well. Notice that Corollary \ref{maincor gen} contains Thurston's original
Bounded Image Theorem as a special case.
Both results also have natural 
generalizations to the pared setting, which we will state and prove in Section 5.

\medskip

Along the way to proving Theorem \ref{main gen}, we will develop some
tools and structural results on the geometry of hyperbolic 3-manifolds
that may be of independent interest. One is a new tool for studying the
geometry of surface groups, and the other is a uniformity statement
for compact cores of manifolds in a deformation space $AH_0(M)$.

\subsubsection*{Length bounds in surface groups}
In \cite{ELC1,ELC2}, Brock-Canary-Minsky
give an explicit connection between the geometry at infinity of a
surface group $[\rho]\in AH(S)$ and its internal geometry, 
by means of a ``model manifold'' combinatorially built up from the
ending data $\nu(\rho)$. In Section \ref{length bounds} we use the
structure of this model, together with 
some refinements proved in \cite{BBCM}, to ``reverse engineer'' a usable criterion
for bounding the length at infinity of a curve in $S$, based on the
situation in the interior.

Specifically, let $\alpha$ be a simple curve in $S$, and $\hhat \alpha$
a representative of $\alpha$ in the the \hbox{3-manifold} $N_\rho$, which is
contained in a ``level surface'' in a product structure on $N_\rho$.
Let  $\mathcal C(\hhat\alpha,L)$ denote the set of
curves on $S$ which intersect 
$\alpha$ essentially and whose geodesic representatives in
$N_\rho$  have length at most $L$ and do not ``lie above''
$\hhat\alpha$ (in a sense to be made precise in Section
\ref{hierarchies etc}). We will show
that, given a length bound on $\hhat\alpha$ and constraints on
$\mathcal C(\hhat\alpha,L)$, we can obtain bounds on the length of $\alpha$ in
the ``bottom'' conformal structure $\nu^-(\rho)$.

The idea here is that, in order for $\alpha$ to be considerably longer
in $\nu^-(\rho)$ 
than $\hhat\alpha$ is, there must be some kind of geometric complexity
between $\hhat\alpha$ and the bottom end of $N_\rho$, and this is what
$\CC(\hhat\alpha,L)$ captures. We give a loose statement of the theorem
here, in the closed case; see section \ref{length bounds} for a more
carefully quantified general version.

\medskip\par\noindent
    {\bf Theorem \ref{upper bound}.}
{\em
    Let $S$ be a closed surface and $\alpha$ an essential simple
    closed curve in $S$. Let $\hhat\alpha$ be a representative of
    $\alpha$ in $N_\rho$ for $[\rho]\in AH(S)$ which is contained in a level surface that
    avoids the thin part of $N_\rho$. Given a length bound on
    $\hhat\alpha$, an upper bound on the  number of elements of $\mathcal C(\hhat\alpha,L)$
    (for suitable $L$) and a lower bound on the length of all elements of
    $\mathcal C(\hhat\alpha,L)$, we obtain an upper bound on
    $\ell_{\nu^-(\rho)}(\alpha)$. 
}

\subsubsection*{Uniform Core Models}
If $M$ is acylindrical, then it follows from Thurston's Compactness Theorem \cite{thurston1} and
work of Anderson-Canary \cite{AC-cores} 
that there is a fixed metric on $M$ such that
for each  $[\rho]\in AH_0(M)$ there
is a uniformly bilipschitz embedding of $M$ into
$N_\rho$ in the homotopy class of $\rho$.
The image of this embedding is a compact core with uniformly bounded geometry.
In our setting this is no longer possible, since a sequence of hyperbolic manifolds in $AH_0(M)$
may ``pull apart'' along Margulis tubes. Moreover, one must take into account the action of the
outer automorphism group, which will be infinite if $M$ is not acylindrical.

In Section \ref{uniform cores section} we define a notion of a
{\em model core} for an element  $[\rho]\in AH_0(M)$. Roughly, a model core is a metric $m$
on the complement $M_{\mathcal A}$ of a collection of solid and thickened tori
in $M$.
We say that a model core 
{\em  controls} $N_\rho$ if there is an embedding of $M$ into $N_\rho$
which is 2-bilipschitz with respect to $m$ on $M_\mathcal A$, 
takes the components of $M\ssm M_\AAA$ into the thin part of
$N_\rho$, and lies
in the homotopy class of $\rho\circ\phi_*$ where $\phi:M\to M$ is a homeomorphism which
is the identity on the complement of the characteristic submanifold.
The image of the embedding is a compact core for $N_\rho$ with uniformly
bounded geometry on $M_\mathcal A$ (and, in particular, on the complement of the characteristic submanifold).

\medskip\noindent
{\bf Theorem \ref{uniform cores}.}
{\em
If $M$ is a hyperbolizable 3-manifold with incompressible boundary
then there is a finite collection of model cores so that
each element of $AH_0(M)$ is controlled by one of them. 
    }

\medskip
    
The proof of Theorem \ref{uniform cores} utilizes a version of
Thurston's Relative Compactness Theorem \cite{thurstonIII} from Canary-Minsky-Taylor \cite{CMT}
and the analysis of the relationship between algebraic and geometric limits of sequences of isomorphic Kleinian groups
carried out by Anderson, Canary, and McCullough \cite{AC-cores,ACM}.

\medskip\noindent
{\bf Summary of Proof of Theorem \ref{main gen}:} It is instructive to first think about our
argument in the case where $M$ is 
acylindrical, which is the setting of Thurston's original Bounded Image Theorem. We will
also simplify the discussion by assuming that $\partial M=S$ is
connected. Let $\alpha$ be a fixed closed curve in $S$. 

Theorem \ref{uniform cores} gives us, in this case, 
a finite collection $\{C_1,\ldots, C_r\}$ of compact Riemannian manifolds with boundary
so that any $[\rho]\in AH(M)$ has a compact core which is 2-bilipschitz to some $C_i$.
(In the acylindrical case this can be directly obtained from
Thurston's Compactness Theorem \cite{thurston1} and the work of 
Anderson-Canary \cite{AC-cores} on cores and limits). 
For such a compact core $C$, its boundary $F$ lifts to a level surface
$\hhat F$ for the cover $N_S$ associated to $\rho(\pi_1(S))$,
which contains a representative $\hhat\alpha$ of $\alpha$ whose length
is uniformly bounded.
Let $\hhat C$ be the component of the pre-image of $C$ in $N_\rho^S$
which contains $\hhat F$ in its boundary. Since $M$ is acylindrical, each component of $N_S\ssm\hhat C$ which
lies below $\hhat\alpha$ is simply connected. 
Therefore, every bounded length closed geodesic which
is not above $\hhat\alpha$ must intersect $\hhat C$. Moreover, again since $M$ is acylindrical, distinct closed
geodesics in $N_S$ project to distinct geodesics in $N_\rho$. Since $C$ has uniformly bounded geometry
we obtain an upper bound on the number of such geodesics and a lower
bound on their length. Theorem \ref{upper bound} then implies
that $\ell_{\nu^-(\rho^S)}(\alpha),$ which is exactly $\alpha$'s
length in the skinning structure, is uniformly bounded.

In order to generalize this proof to the setting where $M$ is only assumed to have incompressible boundary, 
we need the more general statement of Theorem \ref{uniform cores} about model cores. 
This theorem gives us, for $[\rho]\in AH_0(M)$,  a compact core $C$
in $N_\rho$ whose geometry is uniformly controlled on the ``non-window'' part
(as well as on parts of the window). Picking a component
$F$ of $\boundary C$ and a curve $\alpha$ on the non-window part of
$F$, we can again lift to the corresponding cover $N_\rho^F$. Now we
learn that a bounded length geodesic lying below $\hhat F$ will either
intersect $\hhat C$ or lie in a complementary region of $\hhat C$ which
retracts to the window part. In the latter case this geodesic cannot
represent a curve that intersects $\alpha$ essentially, and hence does
not count as a member of $\mathcal C(\hhat\alpha,L)$. An application of
Theorem \ref{upper bound}, with a bit of additional care, 
again gives us the desired bound. 

\medskip\noindent
{\bf Relationship to proofs of Thurston's Bounded Image Theorem:}
Our proof is new even in the acylindrical setting. The only known proof
of Thurston's original Bounded Image Theorem, due to Kent \cite{kent-skin},
uses the work of Anderson-Canary \cite{AC-cores}  to extend the skinning map
continuously to all of $AH(M)$ and then applies Thurston's Compactness Theorem, which
gives that $AH(M)$ is compact,
to conclude that the image is bounded. In our more general setting, the skinning map
does not extend continuously to $AH(M)$, in any reasonable sense, and $AH(M)$ fails to
be compact, so it was necessary to develop a new approach to the proof.

\subsubsection*{Potential applications:}
The  model manifolds constructed in \cite{ELC1,ELC2} are bilipschitz
equivalent to $N_\rho$ for any $[\rho]\in AH(M)$, but the bilipschitz
constant is not in general uniform over all of $AH(M)$. For the case
of surface groups, we do have uniformity once the genus is fixed, but
a uniform model construction that would apply in general would be
helpful for a better global understanding of 
$AH(M)$.

When $M$ is acylindrical, 
Thurston's Bounded Image Theorem suggests a natural candidate for this
uniform model: 
In this case, there is a unique hyperbolic \hbox{3-manifold} $C_M$
with totally geodesic boundary homeomorphic to $M$. Let \hbox{$Y\in\TT(\boundary M)$}
denote the conformal structure of this boundary. 
A model for the
hyperbolic \hbox{3-manifold} in $CC_0(M)$ with 
conformal structure $X\in\mathcal T(\partial M)$ can then be assembled
from $C_M$ and the quasifuchsian hyperbolic 3-manifold with conformal
structure $(X,Y)$. One expects Thurston's Bounded Image Theorem 
and the Bilipschitz Model Manifold Theorem from \cite{ELC2} to be the main
tools for showing that this model is uniform over $CC_0(M)$.

Our relative bounded image theorem should play a similar role in constructing
uniform models for hyperbolic 3-manifolds with finitely generated,
freely indecomposable fundamental group. The complement of the
characteristic submanifold admits a cusped hyperbolic metric with totally
geodesic boundary, and this should play the role of the manifold
$C_M$.

\medskip

The relative bounded image theorem should also be a tool in the study
of the local topology of $AH(M)$.  We recall that Bromberg
\cite{bromberg-pt} and Magid \cite{magid} showed that 
$AH(S)$ fails to be locally connected.  In earlier work \cite{BBCM0}, 
we used Thurston's Bounded Image Theorem to show that
if $M$ is acylindrical and \hbox{$[\rho]\in AH(M)$} is quasiconformally
rigid (i.e. every component of the conformal boundary of $N_\rho$
is a thrice-punctured sphere), then $AH(M)$ is locally connected
at $\rho$. We hope our relative bounded image theorem will allow
us to attack the following conjecture.

\medskip\noindent
{\bf Conjecture:} {\em Let $M$ be a compact, orientable, hyperbolizable 3-manifold whose 
boundary is incompressible. If $[\rho]\in AH(M)$ is quasiconformally rigid, then
$AH(M)$ is locally connected at $\rho$.}

\medskip

The relative bounded image theorem may also be useful  in establishing a particularly mysterious
step in Thurston's original proof of his Geometrization Theorem (see Morgan \cite[Section 10]{Morgan}).
If $\tau:M\to M$ is an orientation-reversing homeomorphism so that the 3-manifold $M_\tau$ is
atoroidal, then $\tau$ induces a map $\tau_\sharp:\mathcal T(\partial M)\to \mathcal T(\overline{\partial M})$.
(Here we allow $M$ to have two components and make the natural adjustments to the notation.)
Thurston claimed the following result, which, along with  Maskit's combination theorems \cite{maskit}, completed the final
step of the proof of his Geometrization Theorem.

\medskip\noindent
{\bf Theorem:} (Thurston) {\em If $M_\tau$ is atoroidal, then there exists $n\in\mathbb N$ so that
$(\tau_\sharp\circ\sigma_M)^n$ has bounded image.}

\medskip

As far as we know, no one currently knows a proof of Thurston's result. All written versions of Thurston's
proof of hyperbolization only produce a fixed point of $\tau_\sharp\circ\sigma_M$ 
(which suffices to hyperbolize $M_\tau$), see for example Kapovich \cite{kapovich-book}, McMullen \cite{mcmullen-geom}, 
Morgan \cite{Morgan} and Otal \cite{otal-hyp}.

\section{Background}

In this section we recall some standard ideas and notation, with a few
new notions particular to this paper. 
Section \ref{deformation} is a brief review of deformation spaces, end
invariants and the definition of skinning maps.
Section \ref{JSJ section} reviews pared manifolds and the Jaco-Shalen Johannson theory of
characteristic submanifolds, and introduces the notion of {\em robust
  systems of annuli} which give useful ways to cut up 3-manifolds that
correspond to the way in which sequences of hyperbolic structures
diverge along cusps.
Section \ref{partial convergence} discusses algebraic and geometric
convergence for representations, and introduces language for
describing divergent sequences that converge on subgroups.
Section \ref{hierarchies etc} recalls the notion of markings and
hierarchies for curve complexes of surfaces, which are the
combinatorial ingredients for the model manifold construction of
\cite{ELC1} and are needed here in the proof of Theorem \ref{upper
  bound}. 
Section \ref{relative} shows how to use Thurston's Relative
Compactness Theorem to extract convergent behavior from a diverging
sequence of elements of $AH_0(M,P)$. 
Corollary \ref{ourrelcompactness} is the
main result we will need, adapting ideas from Canary-Minsky-Taylor
\cite{CMT} to
provide a robust system of annuli for such
a sequence so that a subsequence  converges in a suitable sense on
its complementary pieces.

\subsection{Deformation spaces and skinning maps}
\label{deformation}
We begin with a quick review of the terminology for deformation spaces
of hyperbolic 3-manifolds.

\subsubsection*{Hyperbolic 3-manifolds}
A hyperbolic 3-manifold is the quotient $N=\mathbb H^3/\Gamma$ of  hyperbolic 3-space by a group
$\Gamma$ of isometries acting freely and properly discontinuously. We will always assume that
$N$ is orientable, so 
$$\Gamma\subset{\rm Isom}_+(\mathbb H^3)\cong{\rm PSL}(2,\mathbb C).$$
The {\em convex core} $C(N)$ of $N$ is the smallest convex submanifold of $N$.

There exists a constant $\mu_3>0$, called the {\em Margulis constant}, such that if $\epsilon<\mu_3$ and 
$$N_{(0,\epsilon)}=\{x\in N\ |\ {\rm inj}_N(x)<\epsilon\}$$
is the set of points in $N$ with injectivity radius less than $\epsilon$, then
every component of $N_{(0,\epsilon)}$ is a {\em Margulis region}, i.e.  either a {\em Margulis tube}, 
(a metric neighborhood of a closed geodesic) or a {\em cusp region} (a quotient
of a horoball by a parabolic subgroup). If $\gamma$ is a nontrivial element of $\pi_1(N)$, we denote by 
$\MT_\ep(\gamma)$ the component of $N_{(0,\epsilon)}$, possibly empty, associated to the maximal abelian
subgroup containing $\gamma$, and similarly define $\MT_\ep(a)$ (or $\MT_\ep(A)$)  if $a$ is a
curve in $N$ (or $A$ is an incompressible annulus in $N$). 
We fix an explicit value 
$\ep_0\in (0,\mu_3)$ which we use as a default value for Margulis regions.
(For convenience, we choose the value $\ep_0$ used in the Bilipschitz Model Theorem from \cite{ELC2}).
In particular, we will use the shorthand $\MT(a)=\MT_{\ep_0}(a)$.

Let $N^0$ be obtained from $N$ by removing all cusp regions in $N_{(0,\ep_0)}$.
If $N$ has finitely generated fundamental group,
then $N^0$ contains a {\em relative compact core} $(M,P)\subset (N^0,\partial N^0)$,
where $M$ is a codimension zero submanifold such that the inclusion into $N$ is a homotopy equivalence,
each component $X$ of $\partial N^0$
contains exactly one component  $Q$ of $P$,  and the inclusion  of $Q$ into $X$ is a homotopy
equivalence (see Kulkarni-Shalen \cite{kulkarni-shalen} or McCullough \cite{mcculloughRCC}).
More generally, if $\ep\in(0,\mu_3)$, we define $N^\ep$ to be the complement of the
non-compact components of $N_{(0,\ep)}$ and note that one may similarly define relative compact cores for
$N^\ep$.

\subsubsection*{Pared manifolds}
A {\em pared manifold} is a pair $(M,P)$ where $M$ is a compact oriented
irreducible 3-manifold and $P$ is a closed subsurface of $\boundary M$ whose
components are incompressible annuli and tori, such that
\begin{enumerate}
\item 
every abelian, non-cyclic subgroup of $\pi_1(M)$ is conjugate into a subgroup of 
$\pi_1(Q)$ where $Q$ is a component of $P$, and
\item 
any $\pi_1$-injective map $f:(S^1\times [0,1],S^1\times \{0,1\})\to (M,P)$ of an annulus into $M$ is homotopic,
as a map of pairs, to a map with image in $P$. 
\end{enumerate}

Thurston's Geometrization Theorem, see Morgan \cite[Theorem ${\rm B}'$]{Morgan}, implies that $(M,P)$ is pared if and only if
it is the relative compact core of a hyperbolic \hbox{3-manifold} with finitely generated fundamental group.

The closure $\overline{\partial M-P}$ of the complement of $P$ in $\partial M$ is called the
{\em pared boundary} and denoted $\boundary_0(M,P)$. We say that $(M,P)$ has {\em pared incompressible
boundary} if each component of $\partial_0(M,P)$ is incompressible.

Bonahon's Tameness Theorem \cite{bonahon} implies that if $N^0$ has finitely
generated fundamental group and  its relative compact core $(M,P)$ has pared incompressible boundary,
then $N^0-{\rm int}(M)$ is homeomorphic to $\partial_0(M,P)\times [0,\infty)$. It follows that the relative compact core is 
well-defined up to isotopy in this case.

\subsubsection*{Spaces of representations}
If $G$ is a group let $D(G)$ denote the subset of ${\rm Hom}(G,\PSL 2(\mathbb C))$ consisting of discrete, faithful
representations.
For a 3-manifold $M$, let $D(M) = D(\pi_1(M))$, and for a pared manifold $(M,P)$,
let $D(M,P)$ denote the subset of $D(M)$ consisting of representations such that
$\pi_1(Q)$ maps to a parabolic subgroup for each component $Q$ of $P$.
Let $AH(M,P)$ be the set of conjugacy classes in $D(M,P)$. If $P$ is empty or consists only of tori, then
we  use $AH(M)$ as shorthand for $AH(M,P)$.
We give ${\rm Hom}(G,\PSL 2(\mathbb C))$ the compact-open topology, $D(M)$ and $D(M,P)$ inherit
the subspace topology and we give $AH(M,P)$ the quotient topology.

If $\rho\in D(M,P)$, let
$N_\rho = \Hyp^3/\rho(\pi_1(M))$ be the quotient manifold, and let $(M_\rho,P_\rho)$ be a
relative compact core for $N_\rho^0$.
Note that the conjugacy class $[\rho]\in AH(M)$ determines a homotopy class
of homotopy equivalences $h_\rho:M\to M_\rho$ and associated maps of pairs
$h_\rho:(M,P)\to (M_\rho,P_\rho)$. Let $AH_0(M,P)$ comprise those
elements of $AH(M,P)$ for which the homotopy class of $h_\rho$ contains a 
an orientation-preserving pared homeomorphism.

Let $GF(M,P) \subset AH(M,P)$ denote the geometrically finite representations
(i.e. those for which the intersection of the convex core $C(N_\rho)$ with $N_\rho^0$ is compact)
whose {\em only} parabolic subgroups correspond to components of $P$. 
(In this case, $(C(N_\rho),C(N_\rho)\cap \partial N^0_\rho)$ is a relative compact core unless
$\rho$ is Fuchsian, in which case $C(N_\rho)$ is two-dimensional.)
Let $GF_0(M,P) \subset GF(M,P)$ denote the geometrically finite
elements which also lie in $AH_0(M,P)$. If $P$ is empty, then $GF_0(M,P)$ agrees with the
deformation space $CC_0(M)$ introduced in the introduction.

\subsubsection*{End invariants}
If $(M,P)$ has pared incompressible boundary,
the Bers parameterization (see Bers \cite{bers-spaces,bers-survey} or \cite[Chapter 7]{canary-mccullough})
gives rise to a homeomorphism
$$
B: GF_0(M,P) \to \TT(\boundary_0(M,P)).
$$
Here $\TT(S)$, for a compact surface $S$, denotes the Teichm\"uller
space of marked, finite area hyperbolic structures on the interior of $S$.
A special case of this is the product case $M = S\times [0,1]$ for a
compact surface $S$, and $P = \boundary S \times [0,1]$. In this case
$GF_0(S\times [0,1],\partial S\times [0,1]) \homeo \TT(S)\times\TT(\bar S)$ denotes the space of
{\em quasifuchsian representations} of $S$, where $\bar S$ is $S$ with
the opposite orientation. We use the notation $QF(S)$ for
$GF_0(S\times [0,1],\partial S\times [0,1])$, $D(S)$ for $D(S\times
[0,1],\partial S\times [0,1])$, and
$AH(S)$ for $AH(S\times [0,1],\partial S\times [0,1])$.

Thurston's {\em  end invariants} generalize Bers' parameters. An end invariant for
a point $[\rho]$ in
$AH_0(M,P)$ is the following data: A multicurve
$p$ in $\boundary_0(M,P)$, called the parabolic locus, and for each
component $X$ of $\boundary_0(M,P) \setminus p$ that is not a 3-holed
sphere, either a point
of $\TT(X)$, in which case $X$ is called a geometrically finite component, or a  filling geodesic lamination on $X$,
in which case $X$ is called a geometrically infinite component. 
The parabolic locus $p$ is a collection of core curves of components
of $h_\rho^{-1}(P_\rho)$. Each geometrically finite component
corresponds to a component of the conformal boundary at infinity which
bounds a component of the complement of the relative compact core of
$(N_\rho)^0$, and each geometrically infinite component corresponds to
a component which is ``simply degenerate'' in the sense of Thurston,
in which case the lamination is the ending lamination of that
component. See \cite{ELC1,ELC2} for a detailed discussion.

We write the end invariant of $\rho\in AH_0(M,P)$ as $\nu(\rho)$. If $S$ is a component of
$\boundary_0(M,P)$ and $\alpha$ is a curve on $S$, then we define 
$\ell_{\nu(\rho)}(\gamma)\in [0,+\infty]$ to
be $0$ if $\gamma$ is homotopic into the parabolic locus, to be the length of $\gamma$ in
the associated hyperbolic metric if $\gamma$ is contained in a geometrically finite component
of the parabolic locus, and to be $+\infty$ otherwise.

In the quasifuchsian space $QF(S)$ we write $\nu(\rho)$  as a pair $(\nu^+(\rho),\nu^-(\rho))$
associated to the two copies of the surface.  The ordering of this
pair is determined by keeping track of the orientation of $S$ and of
the quotient 3-manifold. In particular given an orientation-preserving
embedding $S\times[0,1]$ as a relative compact core and identifying
$N_\rho\setminus \MT(\boundary S)$ with $S\times\R$, $\nu^+$
corresponds to the component $S\times (1,\infty)$, and $\nu^-$ to the
$S\times(-\infty,0)$, which we call the {\em top} and {\em bottom ends}, respectively.

Let $\EE(S)$ denote the set of all possible end invariants on a surface $S$. 
The Ending Lamination Theorem \cite{ELC2} tells us that the map
$$
\nu_{(M,P)}: AH_0(M,P) \to \EE(\boundary_0(M,P))
$$
is an injection.  In the special case of a compact surface $S$ we obtain
$$\nu_S: AH_0(S) \to \EE(S)\times \EE(\bar S)$$
which has the form $(\nu_S^+,\nu_S^-)$.
We note that, while the Bers parameterization is
a holomorphic isomorphism, there is no natural topology on
$\EE(\boundary_0(M,P))$ for which $\nu$ is even continuous
(see Brock \cite{brock-EIT}).

\subsubsection*{Skinning maps}

If $(M,P)$ has pared incompressible boundary, then 
for each component $S$ of $\boundary_0(M,P)$, we have a restriction
map $D(M,P) \to D(S)$ which induces a map
$$
r_{(M,P)}^S: AH_0(M,P) \to AH_0(S).
$$

Thurston proved that every cover of a geometrically finite Kleinian group
of  infinite volume which has finitely generated fundamental group is itself
geometrically finite (see Morgan \cite[Prop. 7.1]{Morgan}).
If $P$ is empty, or if no non-peripheral curve in $S$ is homotopic into $P$, then
$r_{(M,P)}^S(GF_0(M,P))\subset QF(S)$.
In these cases, using the Bers parametrization, we obtain a map
$$
\hat r_{(M,P)}^S : \TT(\boundary_0(M,P)) \to \TT(S) \times \TT(\bar S)
$$
and, putting this together over all components we get
$$
\hat r_{(M,P)}:
\TT(\boundary_0(M,P)) \to
\TT(\boundary_0(M,P)) \times
\TT(\overline{\boundary_0(M,P)})
$$
which has the form $\hat r_{(M,P)}=(Id,\sigma_{(M,P)})$,
where 
$$
\sigma_{(M,P)} : \TT(\boundary_0(M,P)) \to \TT(\overline{\boundary_0(M,P)}).
$$
is Thurston's skinning map.

To define a skinning map in general, for each component $S$ of $\partial_0(M,P)$
we consider the map
$$\sigma_{(M,P)}^S=\nu_S^-\circ r_{(M,P)}^S:AH_0(M,P) \to
\EE(\bar S).$$
The skinning map is the product
$$\sigma_{(M,P)}=\prod_{S\subset\partial_0(M,P)} \sigma_{(M,P)}^S$$
where the product is taken over all components of $\partial_0(M,P)$.
Notice that 
$$\prod_{S\subset\partial_0(M,P)}  \nu_S\circ r_{(M,P)}^S:
AH_0(M,P) \to\EE(\boundary_0(M,P))\times\EE(\overline{\boundary_0(M,P)}) $$
has the form $(\nu_{(M,P)},\sigma_{(M,P)})$.

\medskip

\subsubsection*{Fricke spaces}
If $S$ is a compact, orientable surface with negative Euler characteristic,
one may consider the {\em Fricke space} $\mathcal F(S)$ of all complete marked
hyperbolic structures on the interior of $S$. Notice that the Teichm\"uller space $\mathcal T(S)$
of all finite area complete hyperbolic structures on $S$ is naturally the boundary of the Fricke space.
If $S$ has genus $g$ and $n$ boundary components, then 
$$\mathcal F(S)\cong \mathbb R^{6g-6+2n}\times[0,\infty)^n$$
and one may define Fenchel-Nielsen coordinates just as for $\mathcal T(S)$, the only difference
being that one has an additional coordinate which records the lengths of the geodesics in the homotopy
class of $\partial S$, see Bers-Gardiner \cite{bers-gardiner} for details.

For an annulus $A$ we declare $\FF(A)$ to be $[0,\infty)$, where the
parameter denotes the translation length of the core of $A$ in a complete
hyperbolic structure on the interior of $A$.

\subsection{Characteristic submanifold theory}
\label{JSJ section}
Jaco-Shalen \cite{JS} and Johannson \cite{johannson} introduced the characteristic submanifold $\Sigma(M)$
of a compact, orientable, irreducible 3-manifold $M$ with incompressible boundary, which is 
a canonical collection of Seifert-fibered and interval bundle submanifolds which contain
all the incompressible annuli and tori in $M$. Johannson \cite{johannson} further proved that every
homotopy equivalence of $M$ can be homotoped to preserve $\Sigma(M)$ and its complement $M-\Sigma(M)$.
In this section, we recall the key properties of the characteristic submanifold in the setting of pared manifolds.
(For a more detailed discussion of the characteristic submanifold in the pared setting, see either 
Morgan  \cite[Sec. 11]{Morgan} or Canary-McCullough \cite[Chap. 5]{canary-mccullough}).) 

We must introduce some additional notation to handle the pared case.
If $B$ is a compact surface with boundary, we say that $f:B\to M$ is {\em admissible in $(M,P)$}
if whenever $C$ is a component of $\partial B$, then either $f(C)\subset {\rm int}(P)$ or
\hbox{$f(C)\subset \partial_0(M,P)$}. 
We say that two admissible maps (or admissible embeddings) $f:B\to M$ and $g:B\to M$ 
are {\em admissibly homotopic} (or {\em admissibly isotopic}) if there exists a homotopy (isotopy)
$H:B\times [0,1]\to M$ from $f$ to $g$ such that if $C$ is a component of $\partial B$, then
either $H(C\times [0,1])\subset P$ or $H(C\times [0,1])\subset \partial_0(M,P)$.
We recall that if $A$ is an annulus or M\"obius band, then an admissible map $f:A \to M$ is an 
{\em essential immersed annulus or M\"obius band in $(M,P)$},
if  $f_*:\pi_1(A)\to\pi_1(M)$ is injective, and $f$ is not admissibly homotopic to a map with image in $\partial M$.
If $f$ is an embedding, we will simply call $f$ an {\em essential annulus or M\"obius band in $(M,P)$.} In this case we will
often abuse notation and also refer to the image $f(A)$ as an essential annulus or M\"obius band in $(M,P)$.
Notice that with this definition annuli which are parallel to components of $P$ are inessential.

The following result encodes the basic properties of the characteristic submanifold
of a pared manifold with pared incompressible boundary.

\begin{theorem}{charprop}
{\rm (Jaco-Shalen \cite{JS}, Johannson \cite{johannson})}
Let $(M,P)$ be a pared $3$-manifold with incompressible pared boundary. 
There exists a codimension zero submanifold $\Sigma(M,P)$
satisfying the following properties: 
\begin{enumerate}
\item Each component $\Sigma_i$ of $\Sigma(M,P)$ is
either 
\subitem(i)
an interval bundle  over a compact surface $F_i$ with negative
Euler characteristic so that $\Sigma_i\cap \partial_0 (M,P)$
is its associated $\partial I$-bundle,
\subitem(ii) a solid torus $V$, or 
\subitem(iii)  a thickened torus $X$ such that $\partial X$  contains a toroidal component of $P$.
\item The frontier \hbox{${\rm Fr}(\Sigma(M,P))$}  of $\Sigma(M,P)$ is a collection of essential annuli
with boundary in $\partial M-P$.
\item If an annular component of $P$ is homotopic into a solid torus component $V$, of $\Sigma(M,P)$, then it
is contained in the interior of a component of \hbox{$\partial V\cap\partial M$}.
\item Any immersed essential annulus or M\"obius band in $(M,P)$ is admissibly homotopic into  $\Sigma(M,P)$.
If the annulus or M\"obius  band is embedded, then it is is admissibly isotopic into $\Sigma(M,P)$.
\item
If $Y$ is a component of \hbox{$M-\Sigma(M,P)$},
then either $\pi_1(Y)$ is non-abelian
or \hbox{$(\overline{Y}, Fr(Y))\cong (S^1\times [0,1]\times [0,1], S^1\times [0,1]\times \{0,1\}) $}
and $Y$ lies between an interval bundle component of
\hbox{$\Sigma(M,P)$} and a thickened or solid torus component of \hbox{$\Sigma(M,P)$}.

Moreover, 
the component of $\Sigma(M,P)\cup Y$ which contains $Y$  is not a submanifold of type (1).
\end{enumerate}
\noindent A submanifold with these properties is unique up to isotopy, and is called the 
{\em characteristic submanifold} of $M$.
\end{theorem}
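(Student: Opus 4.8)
The plan is to deduce Theorem~\ref{charprop} from the classical Jaco--Shalen--Johannson theory in the non-pared case. The cleanest route is to invoke Johannson's characteristic submanifold associated to the boundary pattern $\{P\}$ on $M$ directly (this is the approach developed in detail in Morgan \cite[Sec.~11]{Morgan} and Canary--McCullough \cite[Chap.~5]{canary-mccullough}); I will instead describe a concrete doubling argument that reduces to the absolute case. Let $DM$ be the double of $M$ along the pared locus $P$, i.e.\ two copies of $M$ glued by the identity along $P$, and let $\tau\colon DM\to DM$ be the resulting orientation-reversing involution, whose fixed-point set is the copy of $P$ in $DM$. Since $M$ is irreducible and each component of $P$ is incompressible, an innermost-disk argument shows $DM$ is a compact orientable irreducible $3$-manifold; and since $(M,P)$ has incompressible pared boundary, $\partial DM$ (two copies of $\partial_0(M,P)$) is incompressible. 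Hence the classical characteristic submanifold $\Sigma(DM)$ of Jaco--Shalen \cite{JS} and Johannson \cite{johannson} exists and is unique up to isotopy.

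Next I would make $\Sigma(DM)$ equivariant. As $\tau(\Sigma(DM))$ is again characteristic, Johannson's uniqueness theorem supplies an ambient isotopy between the two; a standard equivariant refinement (or a direct appeal to Johannson's equivariant statements for the finite group $\langle\tau\rangle$) lets us take $\Sigma(DM)$ to be $\tau$-invariant, transverse to $\mathrm{Fix}(\tau)=P$, with $\tau$ preserving the $I$-fibrations and Seifert fibrations of its pieces. Set $\Sigma(M,P):=\Sigma(DM)\cap M$. Cutting the invariant pieces and frontier annuli of $\Sigma(DM)$ along $P$ then yields properties (1), (2), (4) and (5) from their counterparts for $\Sigma(DM)$: an $I$-bundle piece over a closed surface restricts to an $I$-bundle over a subsurface with its $\partial I$-bundle in $\partial_0(M,P)$; a frontier annulus of $\Sigma(DM)$ restricts to an essential annulus with boundary in $\partial M-P$ (the part of $\mathrm{Fr}(\Sigma(DM))$ running along $P$ is absorbed into $P$, which is precisely why annuli parallel to $P$ are declared inessential); an essential annulus or M\"obius band in $(M,P)$ doubles to one in $DM$, hence is admissibly isotopic into $\Sigma(M,P)$; and the complementary pieces of $\Sigma(M,P)$ in $M$ are the $M$-halves of complementary pieces of $\Sigma(DM)$, giving the dichotomy in (5) and its ``moreover'' clause. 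Uniqueness of $\Sigma(M,P)$ up to isotopy descends by the same device.

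Two points carry the real content. First, one must rule out Seifert pieces of $\Sigma(M,P)$ other than solid tori and thickened tori: a Seifert piece with non-torus base and incompressible boundary has torus frontier components, which is incompatible with property (2) unless the piece exhausts $M$, forcing $\partial_0(M,P)=\varnothing$; carrying out this finite case analysis is exactly where the hypothesis of incompressible pared boundary is genuinely used, and it also forces each thickened-torus piece to meet a toroidal component of $P$. Second, property (3) --- that an annular component of $P$ homotopic into a solid torus piece $V$ lies in the interior of a single component of $\partial V\cap\partial M$ --- is a direct structural statement about Seifert solid tori: the annulus is isotopic to a fibered annulus of $\partial V$, and one examines how the fibered annuli of $\partial V$ are distributed among $\mathrm{Fr}(V)$, the pared annuli, and $\partial_0(M,P)$. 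I expect the equivariant-isotopy step and the minimality (``moreover'') clause of (5) to be the parts most deserving of care; in the write-up I would record the statement as extracted from \cite{JS,johannson} and refer the reader to the detailed pared treatments in Morgan \cite[Sec.~11]{Morgan} and Canary--McCullough \cite[Chap.~5]{canary-mccullough}.
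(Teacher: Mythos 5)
The paper does not prove Theorem \ref{charprop}: it records the statement as a citation to Jaco--Shalen \cite{JS} and Johannson \cite{johannson}, and the remark that follows it points to Canary--McCullough \cite[Thm. 2.9.1 and 5.3.4]{canary-mccullough} for statements in essentially this form, flagging the one genuine delicacy, namely the convention for toroidal components of $P$. Your doubling sketch is therefore a different route from the one the paper implicitly takes (Johannson's boundary-pattern machinery applied directly to $(M,P)$), and as a proof it has two concrete gaps. First, the equivariance step is not a routine refinement of uniqueness up to isotopy: knowing that $\Sigma(DM)$ and $\tau(\Sigma(DM))$ are ambiently isotopic does not let you choose a $\tau$-invariant representative transverse to ${\rm Fix}(\tau)$ with $\tau$-compatible fibrations; that is an equivariant characteristic submanifold theorem (Meeks--Scott, or Johannson's boundary-pattern theory --- which is exactly what the doubling trick is trying to avoid). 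Second, your construction produces the wrong object relative to the convention the paper explicitly adopts: if $T$ is a toroidal component of $P$ meeting no essential annulus of $(M,P)$, then $T$ is still an essential torus in $DM$, so $\Sigma(DM)$ contains a piece around it and $\Sigma(DM)\cap M$ acquires a thickened-torus component at $T$. That is Johannson's convention; the paper follows Jaco--Shalen and excludes such components, so you would have to delete them and re-verify properties (4) and (5) afterwards.

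Smaller points: $\partial DM$ is the double of $\partial_0(M,P)$ along $\partial P$, not two disjoint copies of it (unless $P$ is purely toroidal); the correspondence between essential annuli in $(M,P)$ and essential annuli or tori in $DM$ must be checked separately according to how many boundary components lie on $P$ (an annulus with one boundary component in $P$ doubles to an annulus, one with both in $P$ doubles to a torus, and the pared condition makes the latter all inessential); and the exclusion of large Seifert pieces is not where incompressibility of the pared boundary enters --- it follows from the pared hypotheses (every $\mathbb{Z}^2$ is peripheral in $P$, and every essential annulus into $P$ is parallel into $P$), whereas your appeal to ``incompatibility with property (2)'' uses the conclusion to constrain the construction. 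Given that the paper treats this result as quoted background, the right move is the one you end with: state the theorem as extracted from \cite{JS,johannson} and refer to \cite[Sec. 11]{Morgan} and \cite[Chap. 5]{canary-mccullough}, taking care to reconcile the toroidal-component convention as the paper's remark does.
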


\realfig{JSJ-example}{Schematic of a typical JSJ
 decomposition. $A$ and $C$ represent relatively
  acylindrical pieces. $\Sigma(M,P)$ consists of an $I$-bundle $B$ and a
  solid torus $U$. The window
  surface has six components: $w_1,w_2$ 
  in   $\boundary_0 B$, and four annuli $w_3$--$w_6$.}

\noindent
{\bf Remark:}
For statements of Jaco, Shalen and Johannson's results in a form quite similar to the above statement,
see Canary-McCullough \cite[Thm. 2.9.1 and 5.3.4]{canary-mccullough}. In Johannson \cite{johannson}
(and in Canary-McCullough \cite{canary-mccullough}), every toroidal component of $P$ is contained
in a component of the characteristic submanifold. In this paper, we use the convention of Jaco-Shalen \cite{JS},
that a toroidal component  $T$ of $P$ is only contained in a component of $\Sigma(M,P)$ if there is an essential
annulus with one boundary component in $T$.

\medskip

Thurston defines the {\em window} of $(M,P)$, denoted ${\rm window}(M,P)$, to be the union of the interval bundle components
of $\Sigma(M,P)$ with a regular neighborhood of every component of ${\rm Fr}(\Sigma(M,P))$ which is not
homotopic into an interval bundle component.
The window is an interval bundle over a surface ${\rm wb}(M,P)$, called the {\em window base}, which contains
a copy of the base surface of each interval bundle component of $\Sigma(M,P)$ and an annulus
for each component of ${\rm Fr}(\Sigma(M,P))$ which is not
homotopic into an interval bundle component. 
The {\em window surface} ${\rm ws}(M,P)={\rm window}(M,P)\cap\partial_0(M,P)$
is a two-fold cover of the window base.
We then define 
$$\partial_{nw}(M,P)=\partial_0(M,P)-{\rm ws}(M,P)$$
to be the non-window portion of the boundary. Each component of $\partial_{nw}(M,P)$ is either an annulus
homotopic to a component of ${\rm Fr}(\Sigma(M,P))\cap\partial M$ or a subsurface with negative Euler
characteristic contained in a component of $\partial M-\Sigma(M,P)$.

\subsubsection*{Robust systems of annuli} 
We will see, in Section \ref{relative}, that one may remark a sequence in $AH_0(M,P)$ so that it converges
on the complement of a collection of essential annuli, in a sense to be defined in Section \ref{partial convergence}.
It will be convenient to always choose this collection of essential
annuli so that each solid or thickened torus of $(M,P)$ is either cut
out cleanly in one piece or does not participate at all in the collection.
This is part of a condition we call {\em robustness}, which we
define and study below. 

We say that a collection $\mathcal A$ of essential annuli in $\Sigma(M,P)$ is {\em robust} if
\begin{enumerate}
\item  
the components of $\mathcal A$ are disjoint and mutually non-parallel,
\item
$\mathcal A$ contains the frontier of any thickened torus component $X$ of $\Sigma(M,P)$,
and contains no other annuli homotopic into $X$, and
\item 
if $\mathcal A$ has a component homotopic into a solid  torus component  $V$ of $\Sigma(M,P)$, then
$\mathcal A$ contains the frontier of $V$ and no other annuli homotopic into $V$. Moreover, if $V$ contains
a component of $P$, then the frontier of $V$ is contained in $\mathcal A$.
\end{enumerate}

Any component of $\mathcal A$ which is not homotopic into a solid or thickened torus
component of $\Sigma(M,P)$, must be isotopic into an interval bundle component $\Sigma_i$  of $\Sigma(M,P)$ and hence to
the interval bundle over a two-sided curve in $F_i$ (see \cite[Thm. 2.7.2]{canary-mccullough} for example).
Hence, one may always isotope $\mathcal A$ so that it is the $I$-bundle associated to a 
two-sided multi-curve $a$ in the window base.

A robust collection of essential annuli $\mathcal A$ in $(M,P)$ naturally decomposes $(M,P)$ into 
a collection of solid and thickened tori
and submanifolds which inherit the structure of a pared manifold.
Let $\mathcal N(\mathcal A)$ be a (pared) regular neighborhood of the union of the components of
$\mathcal A$. A solid or thickened torus component of \hbox{$M-\mathcal N(\mathcal A)$} is either a regular
neighborhood of a solid or thickened torus component of $\Sigma (M,P)$ or a regular neighborhood of
an essential M\"obius band in an interval bundle component of $\Sigma(M,P)$.
Let $M_{\mathcal A}$ be obtained from $M-\mathcal N(\mathcal A)$ by deleting all components
which are solid or thickened tori and let
$$P_\mathcal{A}=(M_\mathcal A\cap P)\cup {\rm Fr}(M_\mathcal A).$$

\begin{lemma}{pieces are pared} 
If $(M,P)$ is a pared manifold with pared incompressible boundary and
$\mathcal A$ is robust collection of essential annuli in $(M,P)$,
then each component of $(M_\mathcal A,P_\mathcal A)$ is a pared manifold with
pared incompressible boundary.
\end{lemma}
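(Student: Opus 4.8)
The plan is to fix a component $(M',P')$ of $(M_{\AAA},P_{\AAA})$ and verify the axioms of a pared manifold with pared incompressible boundary one at a time, using two standing inputs. First, $\AAA$ is a disjoint union of essential — hence two-sided, $\pi_1$-injective, non-boundary-parallel — annuli, so cutting $M$ along $\AAA$ keeps the complementary pieces irreducible and realizes their fundamental groups as vertex groups of a graph-of-groups splitting of $\pi_1(M)$ over infinite cyclic edge groups; passing from $M-\nbd(\AAA)$ to $M_{\AAA}$ by discarding the solid- and thickened-torus components changes none of this, so $M'$ is compact, orientable and irreducible and $\pi_1(M')\hookrightarrow\pi_1(M)$. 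Second, $(M,P)$ is itself pared with pared incompressible boundary. The topological bookkeeping is then routine: $P'=(M'\cap P)\cup{\rm Fr}(M')$ is a disjoint union of whole annulus/torus components of $P$ contained in $M'$ together with copies of components of $\AAA$, each incompressible in $M$ and so, by the injectivity above, incompressible in $M'$; and $\boundary_0(M',P')=M'\cap\boundary_0(M,P)$ is $\boundary_0(M,P)$ cut along the essential curves $\boundary\AAA$, so its components are essential — hence incompressible — subsurfaces of incompressible components of $\boundary_0(M,P)$, still incompressible in $M'$, which gives pared incompressible boundary.

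Next I would check pared axiom (1). A noncyclic abelian subgroup $H\le\pi_1(M')$ has noncyclic abelian image in $\pi_1(M)$, which, since $(M,P)$ is pared, is conjugate in $\pi_1(M)$ into $\pi_1(Q)$ for some toroidal component $Q$ of $P$. Toroidal components of $P$ occur only in thickened-torus components of $\Sigma(M,P)$, and robustness (2) guarantees each such piece is cut out cleanly by $\AAA$ and discarded in forming $M_{\AAA}$ — robustness (3) playing the same role for solid-torus pieces. In the Bass--Serre tree of the splitting, $H$ fixes the vertex $v_{M'}$; a noncyclic abelian group lies in no edge stabilizer (these are infinite cyclic), hence fixes no two distinct vertices. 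This both rules out $Q$ lying in a discarded piece and forces the conjugating element into $\stab(v_{M'})=\pi_1(M')$, so $H$ is conjugate in $\pi_1(M')$ into $\pi_1(Q)$, which is now a torus component of $P'$. (If $\pi_1(M')$ is trivial, $M'$ is a ball with $P'=\emptyset$ and there is nothing to verify.)

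The main obstacle is pared axiom (2): a $\pi_1$-injective admissible annulus $f\colon(S^1\times[0,1],\ S^1\times\{0,1\})\to(M',P')$ must be admissibly homotopic into $P'$. I would first homotope $f$, keeping the two boundary circles in $P'$ throughout, to a map whose boundary circles are ``standard'' loops in components $Q_0,Q_1$ of $P'$, with $f$ itself a free homotopy between them. Since $M'$ is aspherical and $\pi_1(Q_i)\hookrightarrow\pi_1(M')$, the obstruction to pushing such an $f$ into $P'$ vanishes once $Q_0=Q_1$, and the residual ambiguity is controlled using that $\pi_1(M)$ is isomorphic to a torsion-free Kleinian group (no element conjugate to its inverse, cyclic loxodromic centralizers, malnormal $\Z^2$ cusp subgroups) — so that axiom (2) reduces to the statement that distinct components of $P'$ have non-commensurable core curves in $\pi_1(M')$. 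For two components coming from $P$ this is immediate (distinct cusps of $(M,P)$). The delicate case is a frontier annulus, a copy of a component of $\AAA$: here robustness (1) (no two components of $\AAA$ are parallel) handles pairs of frontier annuli, while robustness (2) and (3) ensure that any component of $\AAA$ homotopic into a solid- or thickened-torus piece $V$ is a frontier annulus of $V$ and that $V$ is then discarded whole — so a commensurability visible inside $\pi_1(V)$ cannot persist in the vertex group $\pi_1(M')$. This is exactly the configuration the robustness conditions were designed to control, and it is the step I expect to absorb most of the work.

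An alternative, possibly cleaner, packaging of axiom (2) is to first prove that $(M,P\cup\nbd(\AAA))$ is itself a pared manifold with pared incompressible boundary — which again reduces to robustness (1)--(3) — hyperbolize it via Thurston's Geometrization Theorem (Morgan \cite[Theorem ${\rm B}'$]{Morgan}), and then observe that excising the rank-one cusp regions associated to $\AAA$ cuts the resulting hyperbolic 3-manifold into pieces whose relative compact cores are precisely the components of $(M_{\AAA},P_{\AAA})$; their paredness then follows from the same characterization. Whichever route is taken, the indispensable use of the robustness hypotheses occurs at the single point identified above: preventing a piece from acquiring a non-peripheral essential annulus with boundary in the new paring locus.
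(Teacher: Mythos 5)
Your overall strategy --- cut along $\AAA$, view the pieces as vertex groups of a splitting over cyclic edge groups, and verify the pared axioms using robustness together with the fact that $\pi_1(M)$ is (isomorphic to) a torsion-free Kleinian group --- is essentially the paper's, and your treatment of irreducibility, of incompressibility of the pared boundary, and of the non-cyclic abelian subgroup axiom is sound: the Bass--Serre fixed-vertex argument is a clean substitute for the paper's direct appeal to the frontier annuli being essential. (One small correction: under the paper's convention a toroidal component of $P$ met by no essential annulus lies \emph{outside} $\Sigma(M,P)$ and so survives into $M_\AAA$; this does not affect your conclusion.) The genuine gap is in axiom (2). You assert that once both boundary circles of $f$ land in the same component $B$ of $P'$ "the obstruction to pushing such an $f$ into $P'$ vanishes," and you reduce the axiom to non-commensurability of the cores of \emph{distinct} components of $P'$. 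That reduction is wrong: the hard case is a self-annulus of a single annular component $B$, which is admissibly homotopic into $B$ only if the connecting element $g\in\pi_1(M')$ (conjugating one power of the core $b$ to another) actually lies in $\pi_1(B)$. The Kleinian-group facts you invoke only place $g$ in the maximal abelian subgroup $Z$ of $\pi_1(M)$ containing $b$; when $B$ is a frontier annulus of a solid torus $V$ whose core is a proper power of the core of $V$ (or the boundary of a neighborhood of a M\"obius band), one has $\pi_1(B)\subsetneq Z$, so $\pi_1(B)$ is \emph{not} maximal abelian in $\pi_1(M)$ and nothing you have said excludes $g\in(Z\cap\pi_1(M'))\setminus\pi_1(B)$. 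The missing step --- the heart of the paper's proof --- is to show that each $\pi_1(P_0')$ \emph{is} maximal abelian in the vertex group $\pi_1(M')$: if some $\alpha\in\pi_1(M')\setminus\pi_1(P_0')$ centralized $\pi_1(P_0')$, then $\alpha$ and any $\beta\in\pi_1(V)\setminus\pi_1(P_0')$ would both centralize $\pi_1(P_0')$ without commuting with each other (by the normal form in the amalgam, i.e.\ in your Bass--Serre tree), contradicting the fact that centralizers in a torsion-free discrete subgroup of ${\rm PSL}(2,\C)$ are abelian. This is exactly where robustness (3) does its work, and it is absent from your sketch.

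Your alternative packaging fails at the same point: $(M,P\cup\NN(\AAA))$ need not be a pared manifold, because a frontier annulus $A$ of a solid torus $V$ whose core is a proper power of the core of $V$ admits an immersed essential annulus, carrying all of $\pi_1(V)$, with both boundary components on $A$ but not admissibly homotopic into $P\cup\NN(\AAA)$; so Thurston's Geometrization Theorem cannot be applied to that pair, and the reduction to relative compact cores of a hyperbolic manifold does not get off the ground.
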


\begin{proof}
Let $(M',P')$ be a component of $(M_\mathcal A,P_\mathcal A)$.
Every component of $P'$ is either a component of $P$ or isotopic to a component of $\mathcal A$,
hence  is either an incompressible annulus or torus. 

Any non-cyclic abelian subgroup $H$ of $\pi_1(M')$ is also a non-cyclic abelian
subgroup of $\pi_1(M)$, since $\pi_1(M')$ injects into $\pi_1(M)$. Since $(M,P)$ is pared,
$H$ is conjugate in $\pi_1(M)$ to a subgroup of $\pi_1(T)$ where $T$ is a toroidal component of $P$.
Since the frontier of $M'$ is a collection of essential annuli, $T$ must be a toroidal component of
$\partial M'$ and   $H$ 
is conjugate in $\pi_1(M')$ to a subgroup of $\pi_1(T')$.

We next claim that if  $P_0'$ is a component of $P'$  then $\pi_1(P_0')$ is a maximal abelian subgroup of $\pi_1(M')$.
If $P_0'$ is a component of $P$, then it is a maximal abelian subgroup of  $\pi_1(M)$, 
see \cite[Lemma 5.1.1]{canary-mccullough}, hence a maximal abelian subgroup of $\pi_1(M')\subset\pi_1(M)$.
If $\pi_1(P_0')$ is not a maximal abelian subgroup of $\pi_1(M)$, then it 
is a component of $\partial \mathcal N(A_0)$ where $A_0$ is either a component of 
the frontier of a solid or thickened torus or the boundary of a regular neighborhood of an essential M\"obius band,
see Johannson \cite[Lemma 32.1]{johannson}.
In all these cases, $P_0'$ is a component of the frontier of a 
component $X_0$ of $M-M_\mathcal A$ so that 
$\pi_1(P_0')$ is a proper subgroup of the abelian group $\pi_1(X_0)$.  If $\pi_1(P_0')$ is not a maximal abelian
subgroup of $\pi_1(M')$, then there exists an  element  $\alpha$ of
$\pi_1(M')\ssm \pi_1(P_0')$ which centralizes $\pi_1(P_0')$.
If $\beta$ is an element of $\pi_1(X_0)\ssm\pi_1(P_0')$,
then, by the Seifert-van Kampen Theorem, $\alpha$ and $\beta$ both centralize
$\pi_1(P_0')$ but do not themselves commute. However, this is
impossible in a  torsion-free discrete subgroup of ${\rm PSL}(2,\mathbb C)$.

Now consider a $\pi_1$-injective map $f:(A,\partial A)\to (M',P')$  of an annulus $A$
into $(M',P')$.
Notice that, by construction, no two distinct components of $P'$
contain homotopic curves, so $f(\partial A)$ is contained in a single component  $B$ of $P'$.
If $B$ is a torus then, since $(M,P)$ is pared,  
$f$ can be admissibly homotoped in $(M,P)$ to a map
with image in $B$. Since the frontier of $M'$ consists of essential annuli, one may alter the homotopy
to lie entirely within $(M',P')$. (One may see this by lifting the homotopy to the cover of $M$ associated
to $\pi_1(M')$ and noticing that the cover deformation-retracts onto $M'$.)
If $B$ is an annulus,
note that both components of $\partial A$ map to homotopic curves in
$B$: if not then the core of $B$ would have two distinct but conjugate
powers, but this is impossible in a torsion-free discrete subgroup of $PSL(2,\C)$
since the stabilizer of the fixed points of an element must commute
with it.
A homotopy between these two curves, adjoined to $f$, gives a map $\bar f$ of a
torus into $M'$. Since, $\pi_1(B)$ is a maximal abelian subgroup of $\pi_1(M')$,
the map $\bar f$, and hence $f$ itself, lifts to the cover of $M'$ associated to $\pi_1(B)$.
Since this cover deformation retracts to the lift of $B$, the lift of $f$ admissibly homotopes into  the lift of $B$,
which implies that $f$ itself is admissibly homotopic into $B$.
This completes the proof that $(M',P')$ is a pared manifold.

Since every component of $\partial_0M'-P'$ is an incompressible subsurface of a component of $\partial_0M$
and $\partial_0M$ is incompressible in $M$, $(M',P')$ has pared incompressible boundary.
\end{proof}

\subsection{Partial Convergence and Geometric Limits}
\label{partial convergence}

We now recall standard notation on algebraic and geometric convergence,
as well as introducing terminology for 
sequences of representations which
may not converge, but converge (in various senses) on 
subgroups. We take care, in Lemmas \ref{converge and basepoint} and
\ref{basepoints and limits}, to
keep track of basepoints, keeping in mind
the situation where there may be more than
one geometric limit of interest.

\subsubsection*{Partial convergence and basepoints}
Fix a group $H$. If $\rho\in D(H)$  and $g\in \PSL
2(\mathbb C)$, we let $\rho^g$ be the conjugate representation 
$h \mapsto g\rho(h) g^{-1}$. 
For $h\in H$ we let \hbox{$I_h:H\to H$} be conjugation by $h$ and note
that $\rho\circ I_h = \rho^{\rho(h)}$.

We call a group {\em nonelementary} if it is not virtually abelian.
For a sequence $\{\rho_n\}$ in $D(H)$ and a nonelementary subgroup $J<H$, we need two
distinct notions of partial convergence of the sequence on $J$. 

\begin{itemize}
\item We say $\{\rho_n\}$ {\em converges up to conjugacy on $J$} if there exists a
sequence $\{g_n\}$ in $\PSL 2 (\mathbb C)$ such that $\{\rho_n^{g_n}|_J\}$ converges in $D(J)$. 
\item
We say $\{\rho_n\}$ {\em converges up to inner automorphism on $J$} if there
exists a sequence $\{h_n\}$ in $H$ such that $\{\rho_n^{\rho_n(h_n)}|_J\}$
(equivalently $\{\rho_n\circ I_{h_n}|_J\}$)
converges in $D(J)$. 
\end{itemize}

Note that both of these definitions depend only on $\rho_n$ and the conjugacy class of
$J$ in $H$. The first definition also depends only on the conjugacy class of $\rho_n$,
i.e. on $[\rho_n]\in AH(H)$.  

Note that, if both  $\{\rho_n|_J\}$ and
$\{\rho_n^{g_n}|_J\}$ converge, then $g_n$ must remain in a compact subset
of $\PSL 2(\C)$. 
This is a consequence of the assumption that $J$ is
nonelementary, and the fact that the action of $\PSL 2(\C)$ by
conjugation on its nonelementary discrete subgroups is proper.
It follows from this that, if $\{\rho_n\}$ converges up to conjugacy on
$J$, then the limit representation is determined
up to conjugacy.

A representation $\rho\in D(H)$ determines a
basepoint $b_\rho$ in $N_\rho = \Hyp^3/\rho(H)$, namely the image of
the orbit $\rho(H)\zero$, where $\zero$ is a fixed basepoint for
$\Hyp^3$. We note that this basepoint is invariant under inner
automorphisms since $\rho$ and $\rho\circ I_h$ have the same image
for any $h\in H$. We can now relate behavior of basepoints to these
notions of partial convergence. 

\begin{lemma}{converge and basepoint}
Suppose that a sequence $\{\rho_n\}$ in $D(H)$ converges up to inner automorphism on a
nonelementary subgroup $J$ of $H$. For a sequence $\{g_n\}$ in $\PSL 2(\C)$,
\begin{enumerate}
\item if the sequence   $\{d(b_{\rho_n},b_{\rho_n^{g_n}})\}$ is bounded,
  then a subsequence of $\{\rho_n^{g_n}\}$ also converges up to inner automorphism on $J$.
\item if  $\{\rho_n^{g_n}\}$ converges up to inner automorphism on
    $J$, then   $\{d(b_{\rho_n},b_{\rho_n^{g_n}})\}$ is bounded.
\end{enumerate}
\end{lemma}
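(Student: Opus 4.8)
The plan is to exploit the fact that the basepoint $b_\rho$ is exactly the projection of the orbit $\rho(H)\zero$, so that for any $g$ the distance $d(b_\rho, b_{\rho^g})$ measures how far $g$ moves $\zero$ relative to the orbit of $\rho(H)$. More precisely, since $N_{\rho^g} = \Hyp^3/\rho^g(H) = \Hyp^3/g\rho(H)g^{-1}$ and $g$ conjugates $\rho(H)$ to $\rho^g(H)$, the isometry $g$ descends to an isometry $N_\rho \to N_{\rho^g}$ carrying $b_\rho$ to the image of $g\zero$ in $N_{\rho^g}$. Hence
$$
d(b_{\rho_n}, b_{\rho_n^{g_n}}) = d_{N_{\rho_n^{g_n}}}\!\big(g_n\zero,\ \zero\big) = \min_{h\in H} d_{\Hyp^3}\!\big(g_n\zero,\ \rho_n^{g_n}(h)\zero\big) = \min_{h\in H} d_{\Hyp^3}\!\big(\zero,\ \rho_n(h^{-1})g_n^{-1}\zero\big).
$$
So bounding this distance by some $R$ is equivalent to saying there exist $h_n\in H$ with $d_{\Hyp^3}(\zero, \rho_n(h_n) g_n^{-1}\zero)\le R$, i.e. the sequence $\rho_n(h_n)g_n^{-1}$ lies in a bounded subset of $\PSL 2(\C)$.

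For part (1): assume $\{d(b_{\rho_n}, b_{\rho_n^{g_n}})\}$ is bounded by $R$, and let $h_n\in H$ be as above, so $k_n := \rho_n(h_n)g_n^{-1}$ stays in a compact set; pass to a subsequence so $k_n\to k$. By hypothesis $\{\rho_n\}$ converges up to inner automorphism on $J$, so there is a sequence $\{j_n\}$ in $H$ with $\rho_n^{\rho_n(j_n)}|_J \to \psi$ in $D(J)$. I would then write $\rho_n^{g_n} = \rho_n^{k_n^{-1}\rho_n(h_n)} = (\rho_n^{\rho_n(h_n)})^{k_n^{-1}} = (\rho_n\circ I_{h_n})^{k_n^{-1}}$, and precompose with the inner automorphism $I_{j_n h_n^{-1}}$ (acting by elements of $H$) to get $\rho_n^{g_n}\circ I_{h_n j_n^{-1}}$ — unwinding, this equals $\big(\rho_n^{\rho_n(j_n)}\big)^{k_n^{-1}}$. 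Wait: I need to be careful about which side the conjugation lands on; the clean way is to observe $\rho_n^{g_n}\circ I_{h_n} = (\rho_n \circ I_{h_n})^{k_n^{-1}}$ is already a conjugate-on-$H$ rearrangement, then further apply $I_{j_n}\circ I_{h_n}^{-1}$ to land on $(\rho_n^{\rho_n(j_n)})^{k_n^{-1}}$, whose restriction to $J$ converges to $\psi^{k^{-1}}\in D(J)$ because $k_n\to k$ and conjugation is continuous. Since the composite inner automorphism is of the form $I_{(\,\cdot\,)}$ for an element of $H$, this exhibits $\{\rho_n^{g_n}\}$ as converging up to inner automorphism on $J$ (after the subsequence), as required.

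For part (2): assume $\{\rho_n^{g_n}\}$ converges up to inner automorphism on $J$, say $\rho_n^{g_n}\circ I_{m_n}|_J \to \phi$ for some $m_n\in H$, while $\rho_n\circ I_{j_n}|_J\to\psi$ for some $j_n\in H$ by the standing hypothesis. Then both $\rho_n^{g_n \rho_n(m_n)}|_J$ and $\rho_n^{\rho_n(j_n)}|_J$ converge in $D(J)$; since $J$ is nonelementary, the remark following the definition of partial convergence (properness of the conjugation action on nonelementary discrete subgroups) forces the conjugating elements to differ by a bounded amount — i.e. $\rho_n(j_n)^{-1} g_n \rho_n(m_n)$ lies in a compact subset of $\PSL 2(\C)$. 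Therefore $g_n \zero$ lies within bounded distance of $\rho_n(j_n)\rho_n(m_n)^{-1}\zero \in \rho_n(H)\zero$, which by the displayed formula above gives the desired bound on $d(b_{\rho_n}, b_{\rho_n^{g_n}})$. The main obstacle, and the step to write most carefully, is the bookkeeping in part (1) that converts a bounded displacement of $\zero$ into a legitimate inner automorphism (an element of $H$, not just of $\PSL 2(\C)$) witnessing convergence on $J$; once the identification of $d(b_{\rho_n}, b_{\rho_n^{g_n}})$ with $\min_{h} d(\zero, \rho_n(h)g_n^{-1}\zero)$ is in hand, both directions are essentially the properness statement already quoted in the text.
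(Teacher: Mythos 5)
Your approach is the same as the paper's: identify $d(b_{\rho_n},b_{\rho_n^{g_n}})$ with the distance from $\zero$ to the orbit $\rho_n(H)g_n^{-1}\zero$ (equivalently, from $g_n^{-1}\zero$ to $\rho_n(H)\zero$), then run properness of the conjugation action on nonelementary discrete subgroups in both directions. The outline is right, but several of your explicit group-element manipulations are in the wrong order, and taken literally those steps fail. In part (1) the inner automorphism you need is $I_{h_n^{-1}j_n}$: using $(\rho^a)^b=\rho^{ba}$ and $\rho\circ I_h=\rho^{\rho(h)}$ one checks $\rho_n^{g_n}\circ I_{h_n^{-1}j_n}=\rho_n^{g_n\rho_n(h_n^{-1}j_n)}=\rho_n^{k_n^{-1}\rho_n(j_n)}=(\rho_n^{\rho_n(j_n)})^{k_n^{-1}}$, which converges to $\psi^{k^{-1}}$ as you want. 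Neither $I_{j_nh_n^{-1}}$ nor $I_{h_nj_n^{-1}}$ does this, and your ``clean way'' identity $\rho_n^{g_n}\circ I_{h_n}=(\rho_n\circ I_{h_n})^{k_n^{-1}}$ contradicts the correct identity $\rho_n^{g_n}=(\rho_n\circ I_{h_n})^{k_n^{-1}}$ that you wrote two lines earlier.

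In part (2), set $a_n=g_n\rho_n(m_n)$ and $b_n=\rho_n(j_n)$. Properness, applied to the convergent sequences $\rho_n^{b_n}|_J$ and $\rho_n^{a_n}|_J=(\rho_n^{b_n})^{a_nb_n^{-1}}|_J$, bounds $a_nb_n^{-1}=g_n\rho_n(m_nj_n^{-1})$, \emph{not} $b_n^{-1}a_n=\rho_n(j_n)^{-1}g_n\rho_n(m_n)$ as you assert; the two differ by conjugation by the possibly unbounded $\rho_n(j_n)$, so your claim does not follow and your subsequent deduction about $g_n\zero$ (which should in any case be $g_n^{-1}\zero$ relative to the orbit $\rho_n(H)\zero$) also fails as stated. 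The fix is immediate: the inverse $\rho_n(j_nm_n^{-1})g_n^{-1}$ of the correctly bounded quantity is bounded, so taking $h=j_nm_n^{-1}$ in your displayed formula bounds $\min_h d(\zero,\rho_n(h)g_n^{-1}\zero)$. All of this bookkeeping evaporates if you first do what the paper does: since basepoints are invariant under inner automorphisms, replace $\rho_n$ by $\rho_n\circ I_{j_n}$ at the outset so that $\rho_n|_J$ itself converges, which removes $j_n$ from every formula.
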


\begin{proof}
To compare the basepoints of two conjugate
representations $\rho$ and $\rho^g$, consider the cover $\Hyp^3 \to
N_{\rho}=N_{\rho^g}$ induced by the action 
of $\rho(H)$. The lift of $b_\rho$ to this cover is the orbit
$\rho(H)\zero$, whereas the lift of $b_{\rho^g}$ to this cover 
is the orbit $\rho(H) g^{-1}\zero$ (since it is the preimage by the
conjugating map $g$ of the orbit $\rho^g(H)\zero$).
Thus, $d_{N_\rho}(b_\rho,b_{\rho^g})$ is equal to the distance from
$\zero$ to $\rho(H)g^{-1}\zero$.

Now, after suitable conjugation we may assume that $\{\rho_n\}$ already
converges on $J$. 
If the distances
$d(b_{\rho_n},b_{\rho_n^{g_n}})$ are bounded, then 
$d_{\Hyp^3}(\zero,\rho_n(H)g_n^{-1}\zero)$ is bounded, and so there is a
sequence $\{h_n\}$ in $H$ such that $\{\rho_n(h_n)g_n^{-1}\}$ remains in a compact
subset of $\PSL 2(\C)$. Hence a subsequence of $\{\rho_n^{g_n\rho_n(h_n^{-1})}|_J\}$
converges.

For the second part, if  $\{\rho_n^{g_n\rho_n(k_n)}|_J\}$ converges
for some sequence $k_n\in H$, then $\{g_n\rho_n(k_n)\}$ lies in a
compact set as we argued above, and hence the distances
$\{d(b_{\rho_n},b_{\rho_n^{g_n}})\}$ are bounded. 
\end{proof}

These observations allow us to analyze the situation where
one sequence $\{\rho_n\}$ converges up to inner automorphism on several
subgroups, in terms of distances between their basepoints. 
Given  $\{\rho_n\in D(H)\}$ and a collection $\JJ$ of nonelementary
subgroups of $H$, if $\{\rho_n\}$ converges up to conjugacy
on each $J\in\JJ$, fix a conjugating sequence $\{g_n^J\}$
for which $\{\rho_n^{g_n^J}|_J\}$ converges, 
and let $b_n^J$ denote the sequence of associated basepoints.
Lemma \ref{converge and basepoint}
implies that the property that $d(b_n^J,b_n^L)$ is bounded for
$J,L\in\JJ$ is independent of the choice of conjugating sequences. We
now record the following consequence:

\begin{lemma}{basepoints and limits}
Let $\JJ$ be a collection of nonelementary subgroups of
$H$ and suppose that  a sequence 
$\{\rho_n\}$ in $D(H)$ converges up to conjugacy on each $J\in\JJ$.
With choice of basepoints as above, if $\{d(b^J_n,b^L_n)\}$ is bounded for
each $J,L\in\JJ$, then there is a single conjugating sequence $\{g_n\}$
such that, after possibly passing to a subsequence,  
$\{\rho_n^{g_n}\}$ converges up to inner automorphism on each
$J\in\JJ$.

If $b_n$ is the basepoint associated to $\rho_n^{g_n}$, then the sequence
$\{d(b_n,b^J_n)\}$ is bounded for each $J\in\JJ$.
\end{lemma}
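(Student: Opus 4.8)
The plan is to reduce everything to the behavior of conjugating sequences in $\PSL 2(\C)$, exactly as in the proof of Lemma \ref{converge and basepoint}. Fix an enumeration $J_1, J_2, \dots$ of $\JJ$ (the collection may be assumed countable for our purposes, or we can work with it directly since the statement is about pairwise bounded distances). For each $J_i$ we are given a conjugating sequence $\{g_n^{J_i}\}$ with $\{\rho_n^{g_n^{J_i}}|_{J_i}\}$ convergent, and associated basepoints $b_n^{J_i}$, which by the discussion preceding the lemma satisfy $d_{N_{\rho_n}}(b_{\rho_n}, b_n^{J_i}) = d_{\Hyp^3}(\zero, \rho_n(H)(g_n^{J_i})^{-1}\zero)$. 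The first step is to use the hypothesis that all the $\{d(b_n^{J}, b_n^{L})\}$ are bounded: fixing $J_1$ as a reference, for every $i$ the distance $d(b_n^{J_1}, b_n^{J_i})$ is bounded, which by the formula above means that $\rho_n(H)(g_n^{J_1})^{-1}\zero$ and $\rho_n(H)(g_n^{J_i})^{-1}\zero$ stay within bounded distance in $\Hyp^3$; hence there is a sequence $h_n^{(i)} \in H$ with $\{ g_n^{J_1}\rho_n(h_n^{(i)}) (g_n^{J_i})^{-1}\}$ bounded in $\PSL 2(\C)$.

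The second step is to set $g_n := g_n^{J_1}$ as the single conjugating sequence and check convergence up to inner automorphism on each $J_i$. We have $\rho_n^{g_n}|_{J_i} = \rho_n^{g_n^{J_1}\rho_n(h_n^{(i)})\cdot \rho_n(h_n^{(i)})^{-1}}|_{J_i}$; writing $k_n := g_n^{J_1}\rho_n(h_n^{(i)}) (g_n^{J_i})^{-1}$, which lies in a compact set, we get $\rho_n^{g_n}\circ I_{h_n^{(i)}}|_{J_i} = \rho_n^{g_n^{J_1}\rho_n(h_n^{(i)})}|_{J_i} = (\rho_n^{g_n^{J_i}})^{k_n}|_{J_i}$, a bounded conjugate of the convergent sequence $\rho_n^{g_n^{J_i}}|_{J_i}$. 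Passing to a subsequence on which $k_n$ converges (and then, by a diagonal argument over $i$, passing to a single subsequence handling all $J_i$ simultaneously), we see $\{\rho_n^{g_n}\circ I_{h_n^{(i)}}|_{J_i}\}$ converges, which is precisely convergence up to inner automorphism on $J_i$. The diagonalization is the one slightly delicate bookkeeping point — if $\JJ$ is uncountable one should note that the statement only involves pairwise distances and that it suffices to prove the conclusion for each finite subcollection, or simply observe that in all applications in the paper $\JJ$ is finite.

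For the last sentence, let $b_n$ be the basepoint of $\rho_n^{g_n} = \rho_n^{g_n^{J_1}}$. But that is by definition $b_n^{J_1}$, so $d(b_n, b_n^{J_1}) = 0$, and for general $J\in\JJ$ we have $d(b_n, b_n^{J}) = d(b_n^{J_1}, b_n^{J})$, which is bounded by hypothesis. (Here one uses that the basepoint of a representation is invariant under inner automorphisms, so replacing $\rho_n^{g_n}$ by $\rho_n^{g_n}\circ I_{h_n^{(i)}}$ does not move $b_n$.) The main obstacle is really just organizing the passage to a common subsequence and making sure the inner automorphisms $h_n^{(i)}$, which depend on $i$, are correctly absorbed — there is no analytic difficulty beyond the properness of the conjugation action already invoked in Lemma \ref{converge and basepoint}.
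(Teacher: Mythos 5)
Your proposal is correct and follows essentially the same route as the paper: fix one reference subgroup, take its conjugating sequence as the common one, and use the bounded basepoint distances to deduce convergence up to inner automorphism on the others (you unfold the argument of Lemma \ref{converge and basepoint} rather than citing it, but the content is identical, including the observation that $b_n=b_n^{J_1}$ for the final claim). Your extra care about extracting a single subsequence working for all $J\in\JJ$ simultaneously is a minor bookkeeping point that the paper leaves implicit.
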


\begin{proof}
Fixing one member $J_0 \in \JJ$ let $g_n=g_n^{J_0}$ so that
$\{\rho_n^{g_n}|_{J_0}\}$  converges. 
For any other $J\in\JJ$,
we have a bound on $d(b_n^{J_0},b_n^J)$,  so by Lemma \ref{converge
  and basepoint} we find that (a subsequence of) $\{\rho_n^{g_n}\}$ also converges up to
inner automorphism on $J$. The final statement follows since $b_n =
b_n^{J_0}$.
\end{proof}

\subsubsection*{Geometric convergence}
We say that a sequence $\{ \Gamma_n\}$ of torsion-free discrete subgroups of  $\PSL 2(\C)$
{\em converges geometrically} to a discrete group $\Gamma$, if it converges as a
sequence of closed subsets in the sense of Gromov-Hausdorff convergence.
It follows that the sequence $\{\mathbb H^3/\Gamma_n\}$ of quotient manifolds
converges geometrically to $N=\mathbb H^3/\Gamma$
(see Canary-Epstein-Green \cite{CEG} for an extensive discussion).
The following lemma recalls standard properties  of geometric convergence which will be used throughout the paper.

\begin{lemma}{basic geometric limit facts}
Suppose that a sequence  $\{\rho_n\}$  in $D(H)$ converges on a nonelementary
subgroup $J$ of $H$ to some $\rho\in D(J)$. Then after possibly restricting to a subsequence, $\{\rho_n(H)\}$
converges geometrically to a discrete nonelementary group $\hhat\Gamma$.
Let $\hhat N = \Hyp^3/\hhat \Gamma$ and let $\pi:N_\rho \to \hhat N$ 
be the natural covering map. Given $\ep\le \ep_0$ there exists a
nested sequence $\{Z_n\}$ of compact submanifolds exhausting $\hhat  N$ and 
$K_n$-bilipschitz smooth embeddings  $\psi_n:Z_n\to N_{\rho_n}$ such that:
\begin{enumerate}
\item 
$K_n\to 1$.
\item
$\psi_n\circ\pi$ carries the basepoint $b_{\rho}$ of $N_\rho$ to the
basepoint $b_{\rho_n}$ of $N_{\rho_n}$.
\item 
If $Q$ is a compact subset of a component of $\partial\hhat N_{(0,\ep)}$, then, for all
large enough $n$, $\psi_n(Q)$ is contained in $\partial(N_{\rho_n})_{(0,\ep)}$
and \hbox{$\psi_n(Z_n\cap (\hhat N- \hhat N_{(0,\ep)}))$} does not intersect $(N_{\rho_n})_{(0,\ep)}$ .
\item
If $X$ is a finite complex and $h:X\to N_\rho$ is continuous, then, for all
large enough $n$, $(\psi_n\circ \pi\circ h)_*$ is conjugate to $\rho_n\circ \rho^{-1}\circ h_*$.
    
\end{enumerate}
\end{lemma}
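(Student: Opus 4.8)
The plan is to derive the whole statement from the standard theory of geometric convergence developed in Canary--Epstein--Green \cite{CEG}, extracting the extra bookkeeping about basepoints, thin parts, and fundamental groups by keeping careful track of the natural identifications. First I would produce the geometric limit: since $\{\rho_n|_J\}$ converges to the discrete nonelementary $\rho$, it is standard that a subsequence of $\{\rho_n(H)\}$ converges geometrically to a discrete group $\hhat\Gamma$ containing the algebraic limit $\rho(J)$ --- the point being that the presence of the convergent nonelementary subgroup $\rho_n(J)<\rho_n(H)$ prevents $\rho_n(H)$ from degenerating near $\zero$, via the Margulis lemma and J{\o}rgensen's inequality. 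Since $\hhat\Gamma\supseteq\rho(J)$ is nonelementary, $\hhat N=\Hyp^3/\hhat\Gamma$ is a hyperbolic $3$-manifold and $\pi\colon N_\rho\to\hhat N$ is a covering. The machinery of \cite{CEG} then gives a nested compact exhaustion $\{Z_n\}$ of $\hhat N$ and smooth $K_n$-bilipschitz embeddings $\psi_n\colon Z_n\to N_{\rho_n}$ with $K_n\to1$, which is (1); moreover these may be arranged to carry the basepoint $\hhat b$ of $\hhat N$ (the image of $\hhat\Gamma\zero$) to $b_{\rho_n}$, and since the orbits $\rho_n(H)\zero$ converge to $\hhat\Gamma\zero$ we have $\pi(b_\rho)=\hhat b$, which gives (2).

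For (3) I would use that geometric convergence is compatible with the thick-thin decomposition. Each component of $\hhat N_{(0,\ep)}$ is a Margulis tube about a closed geodesic $\gamma$, or a cusp region; in the first case $\gamma$ is approximated under the convergence by closed geodesics $\gamma_n$ in $N_{\rho_n}$ with $\length(\gamma_n)\to\length(\gamma)$, and in the second the cusp persists, so the corresponding $\ep$-Margulis region of $N_{\rho_n}$ converges geometrically to the one in $\hhat N$. As $\ep\le\ep_0<\mu_3$, these regions and their smooth boundary level sets vary continuously, so after an isotopy of $\psi_n$ supported in a neighborhood of $\hhat N_{(0,\ep)}$ one may assume that $\psi_n$ carries $Z_n\cap\partial\hhat N_{(0,\ep)}$ into $\partial(N_{\rho_n})_{(0,\ep)}$ and $Z_n\cap(\hhat N-\hhat N_{(0,\ep)})$ into the complement of $(N_{\rho_n})_{(0,\ep)}$; restricting to a compact $Q\subset\partial\hhat N_{(0,\ep)}$ yields (3). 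This thin-part adjustment is the one step that needs genuine care: since the bilipschitz constants are only close to $1$, a crude distortion estimate would only place $\psi_n(Q)$ in the $K_n\ep$-thin part, and the remedy is to exploit the geometric convergence of the Margulis regions themselves rather than distortion bounds.

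Finally, for (4) I would invoke the standard fact that the embeddings $\psi_n$ can be taken compatible with the correspondence of group elements furnished by geometric convergence: for each $\hhat\gamma\in\hhat\Gamma$ there are $\gamma_n\in\rho_n(H)$ with $\gamma_n\to\hhat\gamma$, and $\psi_n$ carries a loop in $Z_n$ representing $\hhat\gamma$ to one representing a conjugate of $\gamma_n$. When $\hhat\gamma=\rho(j)$ with $j\in J$, algebraic convergence $\rho_n(j)\to\rho(j)$ forces the choice $\gamma_n=\rho_n(j)$. Now, given $h\colon X\to N_\rho$ with $X$ a finite complex, the compact set $\pi(h(X))$ lies in $Z_n$ for large $n$, so $\psi_n\circ\pi\circ h$ is defined; a loop $\delta$ in $X$ with $h_*[\delta]=\rho(j)$ under the identification $\pi_1(N_\rho)\cong\rho(J)$ is carried by $\pi$ to a loop representing $\rho(j)\in\hhat\Gamma$ and then by $\psi_n$ to one representing $\rho_n(j)$ up to conjugacy. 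Since $\rho_n\circ\rho^{-1}$ is exactly the homomorphism $\rho(j)\mapsto\rho_n(j)$, and by (2) the conjugation can be taken trivial once $h$ sends the basepoint of $X$ to $b_\rho$, we conclude that $(\psi_n\circ\pi\circ h)_*$ is conjugate to $\rho_n\circ\rho^{-1}\circ h_*$, as claimed.
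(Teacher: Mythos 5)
Your proposal is correct and follows exactly the route the paper takes: the paper gives no proof of this lemma, only a remark citing \cite[Cor.~I.3.2.11]{CEG} for the geometric limit, the exhaustion and properties (1)--(2), \cite[Lemma 2.8]{ELC2} for property (3), and the proof of \cite[Prop.~3.3]{canary-minsky} for property (4), and your sketch faithfully reconstructs the standard arguments behind those citations. In particular you correctly isolate the two points that need care --- that (3) requires adjusting the comparison maps using geometric convergence of the Margulis regions rather than bilipschitz distortion, and that (4) rests on the approximating elements of $\rho(j)$ being eventually equal to $\rho_n(j)$.
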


\noindent
{\bf Remark:} The existence of the exhausting sequence of submanifolds with properties (1) and (2) is 
discussed in \cite[Cor. I.3.2.11]{CEG}. Property (3) is established in \cite[Lemma 2.8]{ELC2}. Property (4)
is an immediate consequence of the proof of \cite[Prop. 3.3]{canary-minsky} which it generalizes.

\subsection{Hierarchies, model manifolds, and topological order}
\label{hierarchies etc}

We briefly review here some notation and definitions that arise in
\cite{masur-minsky2,ELC1,ELC2}. See also \cite{BBCM} for a
summary. This machinery will play a central role in Section \ref{length bounds}. 

\subsubsection*{Markings, subsurface projections and hierarchies}
A {\em marking} $\mu$ on a surface $S$, as in \cite[Section 2.5]{masur-minsky2}, is a multicurve, denoted
$\base(\mu)$,  together with at most one  transversal curve 
for each component $\beta$ of $\base(\mu)$ (i.e. a curve intersecting $\beta$ minimally which is disjoint from
$\mu-\beta$). A {\em generalized marking} $\mu$, as in \cite[Section 5.1]{ELC1},  is a geodesic
lamination $\base(\mu)$ which supports a measure, together with at most one transversal curve for
each closed curve component of $\base(\mu)$.

To an end invariant $\nu$ on a surface $S$, as
described in Section \ref{deformation}, we associate in \cite[Section 7.1]{ELC1} a 
generalized marking $\mu$ as follows: $\base(\mu)$ consists of the
parabolic locus of $\nu$, the ending laminations of geometrically infinite components
of the complement of the parabolic locus and a minimal length pants decomposition of the hyperbolic
structure on each geometrically finite component of $\nu$. To each curve in $\base(\mu)$ which is
non-peripheral in a geometrically finite component we choose a minimal length transversal curve.

Recall the {\em curve complex} $\CC(Y)$ of a surface, whose
vertices are homotopy classes of essential simple curves (except when
$Y$ is an annulus and a special definition is needed), see \cite{masur-minsky}. 
If $Y$ is an essential subsurface of $S$ and $\alpha\in\CC(S)$ essentially intersects $Y$, one defines
$\pi_Y(\alpha)\in \CC(Y)$ by taking a component of $\alpha\cap Y$ and combining with arcs in $\partial Y$ 
to obtain a (coarsely well-defined) element of $\CC(Y)$ (again a special definition is needed when $Y$ is an
annulus). The curve complex and {\em subsurface projections} $\pi_Y:\CC(S)\to\CC(Y)$ are studied
in \cite{masur-minsky,masur-minsky2} and elsewhere. We also note that
$\pi_Y(\mu)$ is well defined when $\mu$ is a generalized marking, and
by extension so is $\pi_Y(\nu)$ for an end invariant $\nu$. 

A {\em hierarchy}, as in \cite{masur-minsky2} and \cite{ELC1}, is a collection of 
{\em  tight geodesics} in curve complexes of subsurfaces of $S$, with
certain interlocking properties.  A tight geodesic in $\CC(W)$ is a
sequence of simplices $(w_n)$ such that whenever $v_j\in w_j$ are
vertices we have $d(v_i,v_j) = |i-j|$ for $i\ne j$, and such that
$w_i$ is the boundary of the subsurface filled by $w_{i-1}\union
w_{i+1}$. 

To a pair $(\mu^+,\mu^-)$ of generalized markings (which share no infinite leaves), one can associate a
hierarchy $H(\mu^+,\mu^-)$, see \cite[Section 5.5]{ELC1}.
Given a subsurface $W\subseteq S$, we define
 $$d_W(\mu^+,\mu^-) \equiv  d_{\CC(W)}(\pi_W(\mu^+),\pi_W(\mu^-)).$$
If  $d_W(\mu^+,\mu^-)$ is sufficiently large (where the threshold depends only on $S$)
then $\CC(W)$ supports a (unique) geodesic $h_W\in H$, whose initial vertex
 $i_W$ (respectively terminal vertex $t_W$) is uniformly close in
$\CC(W)$ to $\pi_W(\mu^-)$ (resp.  $\pi_W(\mu^+)$), see \cite[Lemma 6.2]{masur-minsky2} and
\cite[Lemma 5.9]{ELC1}.
Let $\CH\subset\CC(S)$ denote the set of all curves appearing in geodesics in $H$.

\subsubsection*{Model manifolds}
In \cite[Section 8]{ELC1} we associate to the hierarchy $H = H(\mu^+,\mu^-)$ a
{\em model manifold} $M(\mu^+,\mu^-)$ which is a copy of ${\rm int}(S)\times\R$
endowed with a certain metric and a subdivision into blocks and tubes.
We will mostly be concerned with the tubes. 
There is a tube $U(\gamma)$ for each curve $\gamma$ in $\CH$ and each component of $\partial S$.
Each tube $U(\gamma)$ has the form
$A\times J$ where $A$ is an open regular neighborhood of $\gamma$ in $S$ and $J$
is an interval, where $J=\mathbb R$ if $\gamma$ is a component of $\partial S$.
In particular, the complement $\hhat M(\mu^+,\mu^-)$ in $M(\mu^+,\mu^-)$ of the tubes associated to $\partial S$
is homeomorphic to $S\times\mathbb R$.

The following result is  the
main outcome of \cite{ELC1} and \cite{ELC2}.

\medskip\noindent
{\bf Bilipschitz Model Manifold Theorem:} (\cite{ELC1,ELC2}) {\em If $S$ is a compact surface, there exists
$L_h(S)>1$ and $\epsilon_h(S)>0$ such that if $\rho\in D(S)$ has end invariants $(\nu^+,\nu^-)$ and
associated generalized markings $(\mu^+,\mu^-)$, then there exists a $L_h(S)$-bilipschitz map
$$f_\rho:N_\rho\to M(\mu^+,\mu^-)$$ 
such that if $\alpha\in\CC(S)$ and $\ell_\rho(\alpha)<\epsilon_h(S)$, then $\alpha\in \CH$ and
$f_\rho(U(\alpha))=\MT(\alpha)$.}

\medskip

By construction the core curves
of the tubes of $M$ have length at most 1, so 
$L_h$ also bounds the
length of their images in $N_\rho$, i.e. if $\alpha\in\CH$, then
$\ell_\rho(\alpha)\le L_h(S)$.

\subsubsection*{Ordering}
If $A$ and $B$ are curves and/or subsurfaces of $S$ and $f:A\to S\times \mathbb R$ and $g: B\to S\times\mathbb R$ 
are maps which are homotopic to the inclusions of $A$ and $B$ into $S\times\{0\}$, then 
we say that $f$ {\em lies above} $g$ if $f$ is homotopic to $+\infty$ in the complement of $g(B)$
(i.e. there is a proper map $F:A\times [0,\infty)\to S\times [r,\infty)$, for some $r$, such that $f=F(\cdot,0)$, whose image is
disjoint from $g(B)$). We similarly 
say that  $f$ {\em lies below}  $g$ if $f$ is homotopic to $-\infty$ in the complement of $g(B)$. 
(This topological ordering is discussed extensively in \cite[Section 3]{ELC2}.)

We will often
identify a curve $\alpha$ in $S\times \mathbb R$ with a function whose image is $\alpha$.
Notice that a curve $\alpha$ may lie above a curve $\beta$ without $\beta$ lying below $\alpha$.
We say that a curve $\alpha$ is {\em unknotted} if it is isotopic to $\hhat\alpha\times \{0\}$ for some
curve $\hhat\alpha$ on $S$.

A {\em level surface} in $S\times\mathbb R$ is an embedding $f:(S,\partial S) \to (S\times \mathbb R,\partial S\times\mathbb R)$
which is properly isotopic to  the identification of $S$ with $S\times \{0\}$. 
Again, we often identify a level surface with its image
in $S\times\mathbb R$.

The following result records a special property of homotopy equivalences which will be used in Section \ref{length bounds}.

\begin{lemma}{order surface}{}
{\rm (\cite[Lemma 2.7]{BBCM})}
Let $\alpha$ and $\beta$ be simple closed curves on $S$ that intersect essentially.
Let $f: (S,\partial S) \to (S\times\mathbb R,\partial S\times\mathbb R)$ be a homotopy equivalence with image 
disjoint from $\beta \times \{0\}$. Then $f$ lies above (below) $\beta \times \{0\}$ if and only if $f|_\alpha$ lies above (below) $\beta \times \{0\}$.
\end{lemma}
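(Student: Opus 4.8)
The plan is to reduce the two-sided statement about the surface $f$ to the one-dimensional statement about the curve $f|_\alpha$ by exploiting the fact that $S\times\R$ retracts onto any level surface, and that $f$ is a homotopy equivalence. First I would fix the easy direction: if $f$ lies above $\beta\times\{0\}$, then since $f|_\alpha$ is just the restriction of $f$ to the subsurface $\alpha$ (a curve), the proper homotopy $F\colon S\times[0,\infty)\to S\times[r,\infty)$ witnessing ``$f$ above'' restricts to $F|_{\alpha\times[0,\infty)}$, which witnesses ``$f|_\alpha$ above $\beta\times\{0\}$''. The same remark handles ``below''. So the content is entirely in the converse.

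For the converse, suppose $f|_\alpha$ lies above $\beta\times\{0\}$; I want to conclude $f$ lies above $\beta\times\{0\}$. Here is the mechanism I would use. Since $f$ has image disjoint from $\beta\times\{0\}$ and $f$ is a homotopy equivalence $S\to S\times\R$, it is in particular $\pi_1$-injective and surjective on $\pi_1$; so the image $f(S)$ is a ``level surface up to homotopy'' and separates $S\times\R$ into two pieces, an ``upper'' piece containing $S\times\{+\infty\}$ and a ``lower'' piece containing $S\times\{-\infty\}$ — more precisely, $(S\times\R)\ssm(\beta\times\{0\})$ has two relevant complementary behaviors at $\pm\infty$, and because $f$ is a $\pi_1$-surjective map missing $\beta\times\{0\}$ it must be homotopic (rel nothing) to $\pm\infty$ in the complement of $\beta\times\{0\}$; the dichotomy ``above or below'' is forced. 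The only thing to rule out is the mixed/indeterminate case, and that is exactly where $\alpha$ enters: $\alpha$ intersects $\beta$ essentially, so $\pi_Y(\cdot)$-type obstructions show that a homotopy equivalence of $S$ whose restriction to $\alpha$ is pushed to $+\infty$ off $\beta\times\{0\}$ cannot simultaneously be pushable to $-\infty$, and cannot fail to be pushable to either. Concretely, I would argue: form the cover $\tilde{N}$ of $S\times\R$ corresponding to $\pi_1$ of a regular neighborhood of $\beta\times\{0\}$ (an annular cover), lift $f$, and observe that in this annular cover the complement of (the lift of) $\beta\times\{0\}$ has exactly two ends, the lift of $f$ is a proper map, and — since $\alpha$ crosses $\beta$ — the lift of $f|_\alpha$ already escapes to the $+$ end; properness plus $\pi_1$-injectivity of $f$ then forces the whole lifted $f$ to the $+$ end, which is the statement that $f$ lies above $\beta\times\{0\}$.

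The step I expect to be the main obstacle is making precise the claim that a $\pi_1$-surjective map $f\colon S\to S\times\R$ missing $\beta\times\{0\}$ must lie strictly above or strictly below $\beta\times\{0\}$, i.e. that the ``indeterminate'' case does not occur — and then pinning down the sign using only the behavior of $f|_\alpha$. The cleanest way I know to do this is the annular-cover argument sketched above combined with the topological-order formalism of \cite[Section 3]{ELC2}: in that language, $f$ and $\beta\times\{0\}$ are both carried by the product structure, $f$ is a homotopy equivalence so it has a well-defined ``order'' relative to the curve $\beta\times\{0\}$ provided their images are disjoint, and this order is detected on any essential subsurface meeting $\beta$ — in particular on the curve $\alpha$, since $\alpha$ intersects $\beta$. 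I would therefore structure the proof as: (i) recall from \cite{ELC2} that for a homotopy equivalence missing $\beta\times\{0\}$ the relation to $\beta\times\{0\}$ is a genuine dichotomy (above/below) rather than a mere partial relation; (ii) observe the easy direction for restriction; (iii) run the annular cover to see that the dichotomy for $f$ is read off from the dichotomy for $f|_\alpha$, using that $\alpha\cap\beta\ne\emptyset$ and the properness of lifts. The bookkeeping about which end is ``$+$'' — i.e. compatibility of orientations on $S$ and on $S\times\R$ — is routine once the covering-space picture is set up.
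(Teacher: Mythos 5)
A preliminary remark on the comparison itself: the paper gives no proof of this lemma — it is quoted verbatim from \cite[Lemma 2.7]{BBCM} — so your argument has to stand entirely on its own.

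Your easy direction is fine: a proper homotopy $F:S\times[0,\infty)\to S\times[r,\infty)$ witnessing that $f$ lies above $\beta\times\{0\}$ restricts to the closed subset $\alpha\times[0,\infty)$, and the restriction is still proper with image missing $\beta\times\{0\}$; likewise for ``below.'' The problem is the converse, and the mechanism you propose for it fails at its central step. You pass to the cover of $S\times\R$ associated to $\pi_1$ of a regular neighborhood of $\beta\times\{0\}$, i.e.\ to the cyclic subgroup $\langle\beta\rangle$, and then ``lift $f$'' and use ``the lift of $f|_\alpha$.'' Neither lift exists: $f$ is a homotopy equivalence, so $f_*\pi_1(S)$ is all of $\pi_1(S\times\R)$ and is not contained in any conjugate of $\langle\beta\rangle$; and $f|_\alpha$ carries $\pi_1(\alpha)$ to a conjugate of $\langle\alpha\rangle$, which lies in no conjugate of $\langle\beta\rangle$ precisely because $\alpha$ and $\beta$ intersect essentially. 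So the covering-space argument that was supposed to carry the whole converse collapses. The remaining ingredients you invoke — that a $\pi_1$-surjective map missing $\beta\times\{0\}$ must lie above or below, and that this dichotomy is ``detected on'' any curve crossing $\beta$ — are assertions of essentially the entire content of the lemma, not reductions of it; similarly, $f(S)$ is merely a compact set, not an embedded surface, so it does not ``separate'' $S\times\R$ in any usable sense.

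A route that does work: first show that $f$ lies above $\beta\times\{0\}$ if and only if $f$ can be homotoped, within the complement of $\beta\times\{0\}$, to a map whose image misses the properly embedded half-open annulus $\beta\times(-\infty,0]$ (one implication is the vertical push-up, the other uses properness of $F$ to move $f$ into $S\times(R,\infty)$ for $R>0$). Then make $f$ transverse to $\beta\times(-\infty,0]$ and remove the preimage circles: inessential ones are removed by a homotopy supported near the interior of the half-annulus, hence in the complement of $\beta\times\{0\}$, while an essential one is an embedded circle in $S$ freely homotopic to $\beta$, hence meets $\alpha$ — and this is exactly where the hypothesis on $f|_\alpha$ must be brought to bear. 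That step requires genuine work (or a covering-space argument based on a subgroup into which the relevant object actually lifts); it is not the routine bookkeeping your sketch suggests.
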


We may apply this ordering to tubes in  $\hhat M(\mu^+,\mu^-)$.
If $v,w\in\CH$, we say that $U(v)$ lies above (below) $U(w)$ if a core curve of $U(v)$
lies above (below) a core curve of $U(w)$ in $\hhat M(\mu^+,\mu^-)$. By construction, if $v$ and $w$ both lie
in simplices of a geodesic $h_W$ in $H(\mu^+,\mu^-)$, then $U(v)$ lies above (below) $U(w)$ if and only if
the simplex containing $v$ is not adjacent to and occurs after (before) the simplex containing $w$ on $h_W$. 
(If $W$ is a once-punctured torus or thrice-punctured sphere, then tubes associated to adjacent vertices are
also ordered consistently.)

\subsection{Thurston's Relative Compactness Theorem and its consequences}
\label{relative}

The goal of this section is Corollary \ref{ourrelcompactness}, which 
shows that any sequence in $AH(M,P)$ may be remarked by
homeomorphisms supported on ${\rm window}(M,P)$, so that it converges,
up to subsequence, on $M_{\mathcal A}$ for some  robust collection $\mathcal A$ of essential 
annuli in $(M,P)$. The core curves of the components of $\AAA$ have
length 0 in the limit representations, and
$\AAA$ can be chosen to be maximal in the sense that
the core curve of any essential annulus in a component of
$(M_\mathcal A,P_\mathcal A)$ has positive length in the
associated limit representation.

Our major tool is 
Thurston's Relative Compactness Theorem \cite{thurstonIII}\footnote{Thurston's Relative Compactness Theorem
is a generalization of his earlier result that $AH(M)$ is compact if $M$ is acylindrical, see Thurston \cite{thurston1}.
The proof combines a uniform bound on lengths of curves in the boundary  of the window base, see Thurston 
\cite[Thm. 1.3]{thurstonIII} or Morgan \cite[Thm. A.1]{morgan-dimn}, and a compactness theorem for representations
of acylindrical pared 3-manifold groups, see Thurston \cite[Thm. 3.1]{thurstonIII}
or Morgan-Shalen \cite[Thm. 2.2]{morgan-shalenIII}.}
(see also Morgan \cite[Cor. 11.5]{Morgan}):

\begin{theorem}{relcompactness} {\rm (Thurston \cite{thurstonIII})}
Suppose that $(M,P)$ is a pared manifold with pared incompressible boundary and
$R$ is  a component of $M\ssm{\rm window}(M,P)$. The restriction mapping from 
$AH(M,P)$ to $AH(R)$ has bounded image.
\end{theorem}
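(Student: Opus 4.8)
The plan is to deduce Theorem~\ref{relcompactness} from two results of Thurston: his uniform length estimate for curves in the frontier of the window base (Thurston \cite[Thm.~1.3]{thurstonIII}, see also Morgan \cite[Thm.~A.1]{morgan-dimn}), and his compactness theorem for representations of acylindrical pared $3$-manifold groups with uniformly bounded peripheral lengths (Thurston \cite[Thm.~3.1]{thurstonIII}, see also Morgan--Shalen \cite[Thm.~2.2]{morgan-shalenIII}). Fix a component $R$ of $M\ssm{\rm window}(M,P)$ and equip it with the pared structure $(R,P_R)$, where $P_R=(P\cap R)\cup{\rm Fr}(R)$. By the definition of the window together with Theorem~\ref{charprop}(2), ${\rm Fr}(R)$ is a union of essential annuli with boundary in $\partial_0(M,P)$, and an argument parallel to the proof of Lemma~\ref{pieces are pared} shows $(R,P_R)$ is a pared manifold with pared incompressible boundary; in particular $\pi_1(R)$ injects into $\pi_1(M)$, so $[\rho]\mapsto[\rho|_{\pi_1(R)}]$ is a well-defined map $AH(M,P)\to AH(R)$, and the goal is to show its image is bounded.

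First I would record a uniform bound on lengths: through the $I$-bundle structure of the window, the core of each component of ${\rm Fr}(R)$ is isotopic to a curve in the frontier of the window base, so Thurston's length estimate yields a constant $L_0=L_0(M,P)$ with $\ell_\rho(c)\le L_0$ for every such core curve $c$ and every $[\rho]\in AH(M,P)$; moreover the core curves of the annular components of $P\cap R$, and every curve on a toroidal component of $P$, map to parabolics. Hence for every $[\rho]\in AH(M,P)$ the restriction $\rho|_{\pi_1(R)}$ sends each core of a component of ${\rm Fr}(R)$ to an element of translation length at most $L_0$ and sends the toroidal components of $P_R$ to parabolics.

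Next I would split into cases according to $\pi_1(R)$. If $\pi_1(R)$ is abelian, the bound is elementary: when $\pi_1(R)\cong\Z$ a core of a component of ${\rm Fr}(R)$ generates a finite-index subgroup, so the length bound $L_0$ bounds $[\rho|_{\pi_1(R)}]$ in $AH(R)$, while if $\pi_1(R)\cong\Z^2$ then $\rho|_{\pi_1(R)}$ is parabolic (a faithful $\Z^2$ in a torsion-free discrete subgroup of $\PSL 2(\C)$ must be). If $\pi_1(R)$ is non-abelian then $R$ is one of the acylindrical pieces of the decomposition: the interval-bundle components of $\Sigma(M,P)$ lie in the window, and its solid and thickened torus pieces get absorbed only into abelian components of $M\ssm{\rm window}(M,P)$, so $R$ is a component of $M\ssm\Sigma(M,P)$, and then Theorem~\ref{charprop}(4) forces $(R,P_R)$ to be acylindrical. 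For such $(R,P_R)$, Thurston's compactness theorem says that the set of $[\psi]\in AH(R)$ sending the toroidal components of $P_R$ to parabolics and having translation length at most a fixed constant on the core of each annular component of $P_R$ is precompact; applying it to $\rho|_{\pi_1(R)}$ with the bound $L_0$ finishes the argument.

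The step I expect to be the main obstacle is not any single estimate but the bookkeeping linking the Jaco--Shalen--Johannson decomposition to the hypotheses of Thurston's two inputs: one must verify that the components of ${\rm Fr}(R)$ really are among the curves controlled by the window-base length estimate and, more delicately, invoke the acylindrical compactness theorem in its \emph{relative} form, bounding representations whose peripheral curves merely have uniformly bounded length, rather than the form in which the peripheral subgroups are required to be parabolic. Once these are pinned down, the theorem follows by assembling the two cited results.
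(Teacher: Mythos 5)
Your proposal follows exactly the route the paper indicates: the paper does not prove Theorem \ref{relcompactness} itself but attributes it to Thurston, and its footnote describes precisely your two ingredients --- the uniform length bound for curves in the boundary of the window base (Thurston \cite[Thm.~1.3]{thurstonIII}, Morgan \cite[Thm.~A.1]{morgan-dimn}) combined with the relative compactness theorem for acylindrical pared $3$-manifold groups with bounded peripheral lengths (Thurston \cite[Thm.~3.1]{thurstonIII}, Morgan--Shalen \cite[Thm.~2.2]{morgan-shalenIII}). Your case analysis (abelian versus relatively acylindrical pieces) and your identification of the delicate points are consistent with that outline, so this is essentially the same argument.
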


If $X$ is a submanifold of $M$ whose frontier is a collection of essential annuli in $(M,P)$,
then each component of $X$ determines a conjugacy class of subgroups of $\pi_1(M)$.
We say that a sequence $\{\rho_n\}\subset D(M,P)$ {\em converges up to conjugacy on $X$}
if $\{\rho_n\}$ converges up to conjugacy on $\pi_1(R)$ (in the sense
of Section \ref{partial convergence}) for each component $R$ of $X$.
In this language, Theorem \ref{relcompactness} implies that any sequence in $D(M,P)$
has  a subsequence which converges up to conjugacy on \hbox{$M\ssm{\rm window}(M,P)$}.

Canary, Minsky and Taylor \cite[Thm. 5.5]{CMT} observed that, if one allows oneself to remark 
by pared homeomorphisms supported on $\Sigma(M,P)$, then one can find a subsequence which
converges up to conjugacy on the complement of a robust collection of essential annuli.

We will adopt the following convenient notational convention throughout the paper.
If \hbox{$\rho\in AH(M,P)$} and $\mathcal B$ is a collection of essential annuli in $(M,P)$, then
$\ell_\rho(\mathcal B)$ denotes the sum of the lengths of the geodesic representatives in $N_\rho$
of the core curves of annuli in $\mathcal B$.

\begin{theorem}{CMTrelcompactness}{\rm (Canary-Minsky-Taylor \cite{CMT})}
Let $(M,P)$ be a pared 3-manifold with pared incompressible boundary. If $\{\rho_n\}$ is a sequence
in $D(M,P)$, then, after passing to a subsequence,
there is a robust collection $\mathcal B$ of
essential annuli in $(M,P)$ and
a sequence of pared homeomorphisms \hbox{$\{\phi_n:(M,P)\to (M,P)\}$}, each supported on ${\rm window}(M,P)$, such that
\begin{enumerate}
\item
$\lim\ell_{\rho_n\circ(\phi_n)_*}(\mathcal B)=0$, and 
\item 
$\{\rho_n\circ (\phi_n)_*\}$ converges up to conjugacy on $M_\mathcal B$.
\end{enumerate}
\end{theorem}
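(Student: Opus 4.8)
The plan is to treat the window and the non-window part of $(M,P)$ separately. For the non-window part I would invoke Theorem \ref{relcompactness}: for each component $R$ of $M\ssm{\rm window}(M,P)$ the image of $\{[\rho_n]\}$ in $AH(R)$ is bounded, hence contained in a compact set, so after passing to a subsequence $\{\rho_n\}$ converges up to conjugacy on $\pi_1(R)$ for every nonelementary such $R$, and a further subsequence makes the translation lengths on the elementary pieces converge. In particular the cores of the frontier annuli of $\Sigma(M,P)$ and of its solid- and thickened-torus components have $\rho_n$-length bounded independently of $n$. This is exactly convergence up to conjugacy on $M\ssm{\rm window}(M,P)$.

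Next I would analyze the window. It is an interval bundle over the window base $B={\rm wb}(M,P)$, and by Johannson's theory \cite{johannson} every mapping class of $B$ fixing $\partial B$ is realized by a pared homeomorphism of $(M,P)$ supported on ${\rm window}(M,P)$; these furnish the remarkings we are permitted to use. Restricting $\rho_n$ to the interval-bundle components of $\Sigma(M,P)$ gives a sequence of discrete faithful representations of the associated (surface-with-boundary) groups in which $\partial B$ has bounded length, by the previous step. The heart of the proof is then an appeal to the deformation theory of surface groups --- Thurston's Double Limit Theorem together with the structure of the Thurston/Deligne--Mumford boundary of Teichm\"uller space, or equivalently the Ending Lamination Theorem and the Bilipschitz Model Manifold Theorem of \cite{ELC1,ELC2}: after remarking by a sequence $\{\phi_n\}$ of window-supported pared homeomorphisms and passing to a subsequence, one produces a multicurve $a$ in $B$ such that the core curves of the interval bundle over $a$ have $\rho_n\circ(\phi_n)_*$-length tending to $0$, while $\{\rho_n\circ(\phi_n)_*\}$ converges up to conjugacy on the interval bundle over each component of $B\ssm a$. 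Concretely, one takes $a$ to be the curves in $B$ whose $\rho_n\circ(\phi_n)_*$-length tends to $0$ along the subsequence (these are pairwise disjoint by the Margulis lemma); every remaining curve of $B$ is then bounded below by the choice of $a$, and bounded above because, once the diverging twisting has been absorbed by the $\phi_n$, the only remaining obstruction to convergence on the window base is such a pinch --- so a bounded-length marking of each piece of $B\ssm a$ yields the stated convergence.

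To assemble the robust system, let $\mathcal B$ be the interval bundle over $a$ together with the frontiers of whichever solid- and thickened-torus components of $\Sigma(M,P)$ must be added to satisfy conditions (2) and (3) in the definition of robustness (these are supplied by Theorem \ref{charprop}; the cores of the extra annuli are parabolic or short, so conclusion (1) is unaffected). Then $\mathcal B$ is robust and $\ell_{\rho_n\circ(\phi_n)_*}(\mathcal B)\to 0$, which is conclusion (1). By Lemma \ref{pieces are pared} each component of $(M_{\mathcal B},P_{\mathcal B})$ is a pared manifold with pared incompressible boundary, so conclusion (2) is meaningful; combining the convergence on $M\ssm{\rm window}(M,P)$ with the convergence on the pieces of the window, and using Lemmas \ref{converge and basepoint} and \ref{basepoints and limits} to reconcile the conjugating sequences on pieces that share a boundary annulus, yields conclusion (2).

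I expect the main obstacle to be the window step: showing that, once the twisting in the window has been normalized away by remarking, the only obstruction to convergence on the window base is the pinching of a multicurve whose length tends genuinely to $0$, and that cutting along it restores convergence on the complementary pieces. This is a relative and parametrized form of Thurston's compactness and double-limit theorems --- one must handle interval bundles over surfaces with boundary as well as twisted bundles, carry along the boundary-length bounds from the first step, arrange that the normalizing mapping classes are realized by \emph{pared} homeomorphisms supported on the window, and make the degeneration interact cleanly with the solid and thickened tori so that the resulting annulus system is robust. When $M$ is itself an interval bundle the non-window part is empty and the statement reduces precisely to such a relative double-limit theorem.
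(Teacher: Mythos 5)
Your overall architecture is the same as the paper's: Theorem \ref{relcompactness} handles $M\ssm{\rm window}(M,P)$, the window is treated by surface theory after remarking, and the annuli of the solid and thickened tori are adjoined to make the system robust. But the two steps you yourself flag as the heart of the matter are where the proposal has genuine gaps. First, your ``concrete'' description of the window step is circular: you define $a$ as the curves whose $\rho_n\circ(\phi_n)_*$-length tends to $0$ and then assert that everything else is bounded above ``once the diverging twisting has been absorbed by the $\phi_n$'' --- but the $\phi_n$ have not been constructed, and the claim that pinching is the only obstruction to convergence is precisely what must be proved. The paper (following \cite{CMT}) gets this from a pleated surface $f_n:(F_n,\tau_n)\to N_{\rho_n}$ realizing the window base: the induced hyperbolic metrics $\tau_n$ live in moduli space, where Mumford compactness (after remarking by $\hat\psi_n$ and passing to a subsequence) produces the pinching multicurve $b_I$ and uniform length bounds on curves disjoint from it; the $1$-Lipschitz property of $f_n$ then transfers these bounds to $N_{\rho_n}$, and a bounded-length marking of each complementary piece yields convergence up to conjugacy there. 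The Double Limit Theorem, which you invoke, is not the relevant tool here.

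Second, your assembly step does not work as stated. A component $R$ of $M_{\mathcal B}$ typically contains both window and non-window pieces glued along frontier annuli of the window that are not in $\mathcal B$. Lemmas \ref{converge and basepoint} and \ref{basepoints and limits} only produce a single conjugating sequence making $\{\rho_n^{g_n}\}$ converge \emph{up to inner automorphism} on each piece separately; this does not imply convergence on $\pi_1(R)$, because the inner automorphisms on the two sides of a shared annulus $A$ may differ by a divergent power of the core of $A$ --- i.e.\ the relative twisting about $A$ can go to infinity even when both sides converge and the basepoints stay close. This is exactly why the paper composes the (inverted) window remarkings with a sequence $D_n$ of Dehn multitwists supported on ${\rm Fr}({\rm window}(M,P))$, chosen (via \cite[Lem.~5.7]{CMT}) to kill the divergent relative twisting; note that such twists are still pared homeomorphisms supported on the window, since the window contains a regular neighborhood of its frontier annuli. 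Without this correction, conclusion (2) fails for any component of $M_{\mathcal B}$ that straddles the window frontier.
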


We provide a sketch of the proof of Theorem \ref{CMTrelcompactness}, since our
statement is slightly different and more general than the one given in \cite{CMT}, although their
proof goes through directly to yield our statement.

\medskip\noindent
{\em Sketch of proof of Theorem \ref{CMTrelcompactness}:} 
We first apply Theorem \ref{relcompactness} to pass  to a subsequence so that
$\{\rho_n\}$ converges up to conjugacy on $M\ssm{\rm window}(M,P)$.
We  then construct $\mathcal B$ and $\{\phi_n\}$
piece by piece. We first  include the frontier of any thickened torus component of $\Sigma(M,P)$ in $\mathcal B$.
If $V$ is a solid torus component of $\Sigma(M,P)$ with core curve $v$,
then we include ${\rm Fr}(V)$ in $\mathcal B$ if and only if $\lim \ell_{\rho_n}(v)=0$.

Let $F$ be the collection of components of ${\rm wb}(M,P)$ which are base surfaces of interval
bundle components of $\Sigma(M,P)$.
For each $n$, let $F_n$ be obtained by removing any boundary component $f$ so that $\ell_{\rho_n}(f)=0$.
Consider a pleated surface \hbox{$f_n:(F_n,\tau_n)\to N_{\rho_n}$} in the homotopy class of
the inclusion map, where $\tau_n$ is
a finite area hyperbolic metric on $F_n$.
(A pleated surface is a $1$-Lipschitz map which is totally geodesic on a geodesic lamination which includes
the boundary, called the pleating locus,
and totally geodesic on the complement of the pleating locus.)
One may pass to a subsequence so that there exists a two-sided multicurve 
$b_I$ on $F$ and a sequence $\{\hat\psi_n\}$ of
homeomorphisms of ${\rm int}(F)$, which extend to the identity on $\partial F$, 
so that  $\lim\ell_{\hat\psi_n^*(\tau_n)}(b_I)=0$ and if $y$
is a curve on $F$ which is disjoint from $b_I$, then $\{\ell_{\hat\psi_n^*(\tau_n)}(y)\}$ is bounded.
(See \cite[Prop. 5.6]{CMT} and its use in the proof of \cite[Thm. 5.5]{CMT} for more details.)
We complete the construction of $\mathcal B$ by adding the interval bundle over $b_I$ to $\mathcal B$.

We may extend each $\hat\psi_n$ to a homeomorphism of ${\rm window}(M,P)$ which is the
identity on ${\rm Fr}({\rm window}(M,P))$ and hence to a pared homeomorphism $\psi_n$ of $(M,P)$ which
is supported on ${\rm window}(M,P)$. One then checks that, up to subsequence, there
exists a sequence $D_n$ of Dehn multitwists in ${\rm Fr}({\rm window}(M,P))$ so that  if $\phi_n=D_n\circ\psi_n^{-1}$,
then $\{\rho_n\circ(\phi_n)_*\}$ converges up to conjugacy on $M_\mathcal B$, which verifies property (2)
(see \cite[Lem. 5.7]{CMT} and its use in the proof of \cite[Thm. 5.5]{CMT} for more details).
Properties (1) and (3)  hold by construction.
\qed

\medskip

It will be useful to be able to choose the robust collection of annuli $\mathcal B$ in 
Theorem \ref{CMTrelcompactness} to be maximal in
the sense that the length of any essential annulus in $(M_{\mathcal B},P_{\mathcal B})$ has positive
length in its associated limit representation.

\begin{corollary}{ourrelcompactness}
Let $(M,P)$ be a pared 3-manifold with pared incompressible boundary.
If  $\{\rho_n\}$ is a sequence in $D(M,P)$, then, after passing to a subsequence,
there is a robust collection $\mathcal A$ of
essential annuli  in $(M,P)$ and
a sequence of pared homeomorphisms $\{\phi_n:(M,P)\to (M,P)\}$, each supported on ${\rm window}(M,P)$, such that
\begin{enumerate}
\item
$\lim\ell_{\rho_n\circ(\phi_n)_*}(\mathcal A)=0$
\item 
$\{\rho_n\circ(\phi_n)_*\}$ converges  up to conjugacy on $M_\mathcal A$, and
\item 
If $B$ is an essential annulus in a component of $(M_\mathcal A,P_\mathcal A)$,
then

$\lim \ell_{\rho_n\circ(\phi_n)_*}(B)>0$.
\end{enumerate}
\end{corollary}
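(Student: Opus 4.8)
\emph{The plan.} Conditions (1) and (2) of the corollary are exactly what Theorem~\ref{CMTrelcompactness} provides; the new content is the maximality assertion (3), which I would extract by an extremal choice of the annulus system. Call a robust collection $\mathcal{A}$ of essential annuli in $(M,P)$ \emph{admissible} if, after passing to a subsequence of $\{\rho_n\}$, there are pared homeomorphisms $\{\phi_n:(M,P)\to(M,P)\}$ supported on ${\rm window}(M,P)$ for which (1) and (2) hold with this $\mathcal{A}$. Theorem~\ref{CMTrelcompactness} shows admissible collections exist. On the other hand, by Theorem~\ref{charprop} a robust collection consists of disjoint, mutually non-parallel essential annuli isotopic into $\Sigma(M,P)$, whose components not homotopic into a solid or thickened torus are $I$-bundles over a two-sided multicurve in the fixed surface ${\rm wb}(M,P)$; together with the finitely many frontier annuli of the torus pieces, this bounds the number of components of a robust collection in terms of $(M,P)$ alone. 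I would therefore fix an admissible $\mathcal{A}$ with the \emph{maximal} number of components, together with a subsequence and homeomorphisms $\{\phi_n\}$ realizing (1) and (2), and show that (3) then holds automatically.

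\emph{The contradiction step.} Suppose (3) fails. After relabeling there is a component $(M',P')$ of $(M_\mathcal{A},P_\mathcal{A})$ and an essential annulus $B$ in $(M',P')$ whose core $c$ satisfies $\lim_n\ell_{\rho_n\circ(\phi_n)_*}(c)=0$ along our subsequence. By Lemma~\ref{pieces are pared}, $(M',P')$ is pared with pared incompressible boundary; and, crucially, condition (2) for $\mathcal{A}$ says that $\{\rho_n\circ(\phi_n)_*\}$ converges up to conjugacy on $\pi_1(R)$ for \emph{each} component $R$ of $M_\mathcal{A}$, in particular on $\pi_1(M')$ itself, hence — using the same conjugators and passing to subgroups — on $\pi_1$ of any piece of an essential-annulus decomposition of $(M',P')$. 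I would now enlarge $\mathcal{A}$ inside $M'$ using $B$, keeping $\phi'_n=\phi_n$. If $c$ is not homotopic into a solid or thickened torus component of $\Sigma(M',P')$, then by Theorem~\ref{charprop}(4) and the discussion of robust annuli in Section~\ref{JSJ section}, $B$ is isotopic in $M'$ to the $I$-bundle over a two-sided simple closed curve, and I set $\mathcal{A}'=\mathcal{A}\cup\{B\}$; if $c$ is homotopic into a solid torus component $V$ of $\Sigma(M',P')$, I set $\mathcal{A}'=\mathcal{A}\cup{\rm Fr}(V)$. (The thickened-torus case cannot occur: a thickened torus of $\Sigma(M',P')$ meets a toroidal component of $P'$, hence of $P$, which via an essential annulus in $M'$, pushed into $M$, would force that thickened torus to already have been cut out by $\mathcal{A}$.)

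\emph{Verifying $\mathcal{A}'$ is admissible with more components.} The new annuli lie in ${\rm int}(M_\mathcal{A})$, so they are disjoint from $\mathcal{A}$; since $B$ is essential in $(M',P')$ and frontier annuli of the minimal submanifold $\Sigma(M',P')$ are essential, no new annulus is admissibly parallel in $M'$ to a component of $P'$, hence none is parallel in $M$ to a component of $\mathcal{A}$; and, since the solid and thickened tori of $\Sigma(M,P)$ whose frontiers lie in $\mathcal{A}$ have been removed, no new annulus has core conjugate into the $\Z^2$ of such a piece. Using the corresponding robustness properties of $\mathcal{A}$ and of ${\rm Fr}(V)\subset(M',P')$, one checks that $\mathcal{A}'$ satisfies the robustness axioms. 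For (1): the cores of the new annuli are fixed powers of $c$, or of the core $v$ of $V$ (which is short because $c=v^k$ with $k$ fixed and $\ell_{\rho_n\circ(\phi_n)_*}(v)=\ell_{\rho_n\circ(\phi_n)_*}(c)/k\to0$), so $\ell_{\rho_n\circ(\phi_n)_*}(\mathcal{A}')\to0$. For (2): every component of $M_{\mathcal{A}'}$ is either a non-torus component of $M_\mathcal{A}$ other than $M'$, on which $\{\rho_n\circ(\phi_n)_*\}$ already converges up to conjugacy, or a decomposition piece of $(M',P')$, on which it converges up to conjugacy because it does so on all of $\pi_1(M')$. Thus $\mathcal{A}'$ is admissible with strictly more components than $\mathcal{A}$, contradicting maximality; hence (3) holds.

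\emph{Main obstacle.} The delicate point is the robustness bookkeeping for $\mathcal{A}'={\mathcal A}\cup{\rm Fr}(V)$ (or $\mathcal{A}\cup\{B\}$): one must verify the solid- and thickened-torus axioms in $(M,P)$, which amounts to controlling how $\Sigma(M',P')$ sits inside $\Sigma(M,P)$ --- in particular ruling out any clash between ${\rm Fr}(V)$ and annuli of $\Sigma(M,P)$ not already in $\mathcal{A}$. This is a matter of standard but careful Jaco--Shalen--Johannson analysis of characteristic submanifolds under cutting along essential annuli. Everything else is a finite, complexity-decreasing iteration, here packaged as a single extremal choice feeding on Theorem~\ref{CMTrelcompactness}.
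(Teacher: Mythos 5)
Your proposal is correct and follows essentially the same route as the paper: both obtain (1) and (2) from Theorem \ref{CMTrelcompactness} and then secure (3) by an extremal choice of the annulus system (the paper adjoins to the collection $\mathcal B$ of Theorem \ref{CMTrelcompactness} a maximal collection of additional short, disjoint, non-parallel essential annuli in $(M_{\mathcal B},P_{\mathcal B})$, keeping the same $\phi_n$ --- i.e.\ your maximality argument run constructively rather than by contradiction). The robustness bookkeeping you flag as the main obstacle is handled in the paper by noting that the extra annuli may be taken to be the $I$-bundle over a maximal two-sided multicurve in ${\rm wb}(M,P)\setminus b$ whose length tends to $0$.
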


\begin{proof}{} 
Theorem \ref{CMTrelcompactness} guarantees that there exists a subsequence,
still called $\{\rho_n\}$, a robust collection $\mathcal B$  of
essential annuli  with base multicurve $b$ and sequence of pared homeomorphisms $\{\phi_n:(M,P)\to (M,P)\}$
such that
$\lim\ell_{\rho_n\circ(\phi_n)_*}(b)=0$ and
$\{\rho_n\circ (\phi_n)_*\}$ converges up to conjugacy on $M_\mathcal B$.
We may further assume that if $V$ is a solid torus component of $\Sigma(M,P)$ with core curve $v$, 
then ${\rm Fr}(V)\subset \mathcal B$ if and only if $\lim \ell_{\rho_n}(v)=0$

Let $\mathcal C$ be a maximal collection of disjoint, non-parallel essential annuli in $(M_\mathcal B,P_\mathcal B)$
such that \hbox{$\lim \ell_{\rho_n\circ(\phi_n)_*}(\mathcal C)=0$}.  
(Alternatively, we may choose $c$ to be a maximal two-sided multicurve in ${\rm wb}(M,P)-b$
such that $\lim\ell_{\rho_n\circ(\phi_n)_*}(c)=0$ and let $\mathcal C$ be the interval bundle over $c$.)
Let $\mathcal A$ be the union of $\mathcal B$ and $\mathcal C$.
Properties (1) and (2)  hold for $\mathcal A$, since they held for the subcollection $\mathcal B$ of $\mathcal A$.
Property (3) holds by construction.
\end{proof}

\section{Length bounds}
\label{length bounds}

Let $S$ be a compact surface and $[\rho]\in AH(S)$ a Kleinian surface group. 
The goal of this section is a 
criterion for bounding the length at infinity of a curve in $N_\rho$,
in terms of the situation of a representative
of the curve in the interior. That is, given a simple curve $\alpha\subset S$
with a representative $\hhat \alpha$ in $N_\rho$ of given length, we wish to know
when we can bound $\ell_{\nu^-(\rho)}(\alpha)$. The key is to study
the collection of bounded length geodesics in $N_\rho$ that lie
``between'' $\hhat\alpha$ and the bottom end of $N_\rho$, in the
following precise sense.

Let  $\mathcal C(\hhat\alpha,L)$ denote the set of curves on $S$ which intersect
$\alpha$ essentially and whose geodesic representatives in
$N_\rho$  have length at most $L$ and do not lie above $\hhat\alpha$
(in the sense of Section \ref{hierarchies etc}).
Parabolic curves have no geodesic representatives, but we adopt the
convention that  $\beta$ lies above every curve in $N_\rho$ if it is in the
parabolic locus of $\nu^+$, and below if it is in the parabolic locus
of $\nu^-$.

We will see that given an upper bound on the length of $\hhat\alpha$,
an upper bound on the size of $\mathcal C(\hhat\alpha,L)$ and a lower bound on
the length of any curve in $\mathcal C(\hhat\alpha,L)$, one obtains an upper bound on the length of $\alpha$
in the bottom ending invariant. In the statement, $L_h(S)$ is the constant from 
the Bilipschitz Model Manifold Theorem in Section \ref{hierarchies etc}.

\begin{theorem}{upper bound}{}{}
Let $S$ be a compact surface.  Given $R$, $L_0$ ,$\epsilon\in(0,\mu_3)$ and \hbox{$L\ge L_h(S)$}, 
there exists $L_1>0$ such that, if $\rho\in AH(S)$,
$\alpha$ is a simple closed curve on $S$, and $\hhat \alpha$ is a representative of
$\alpha$ in $N_\rho$ such that

\begin{enumerate}
\item $\hhat\alpha$ has length at most $L_0$,
\item $\mathcal C(\hhat\alpha,L)$  contains at most $R$ elements, 
\item $\mathcal C(\hhat\alpha,L)$ contains no curves of length less than $\ep$, and
\item $\hhat\alpha$ lies on a level surface $F$ which does not intersect  $(N_\rho)_{(0,\ep)}$,
\end{enumerate}
then 
$$l_{\nu^{-}(\rho)}(\alpha) < L_1.$$
\end{theorem}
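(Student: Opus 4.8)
The plan is to transport the hypotheses into the model manifold of $[\rho]$, read off the combinatorial quantities governing $\ell_{\nu^-(\rho)}(\alpha)$, and bound each of them in terms of $R$, $L_0$, $\epsilon$ and $L$. First I would pass to the model: let $(\mu^+,\mu^-)$ be the generalized markings of the end invariants of $\rho$ and let $f_\rho\colon N_\rho\to M(\mu^+,\mu^-)$ be the $L_h(S)$--bilipschitz map of the Bilipschitz Model Manifold Theorem. Under $f_\rho$, the curve $\hhat\alpha$ becomes a curve of length at most $L_h(S)L_0$ lying on a level surface $\hhat F$ of $\hhat M(\mu^+,\mu^-)\cong S\times\R$ in the thick part of the model, which (since $\hhat F$ is thick) I may complete to a marking $\mu_F$ with $\alpha\in\base(\mu_F)$ and uniformly bounded total length. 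Since every curve of $\CH$ has $\ell_\rho$--length at most $L_h(S)\le L$, the key elementary observation is: if $\gamma\in\CH$ intersects $\alpha$ essentially and the tube $U(\gamma)$ lies below $\hhat F$ in the product structure of the model, then by Lemma \ref{order surface} the geodesic representative of $\gamma$ in $N_\rho$ lies below $\hhat\alpha$, and hence $\gamma\in\mathcal C(\hhat\alpha,L)$.

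Next I would use hypothesis (2) to rule out degeneration of $\nu^-(\rho)$ across $\alpha$. If $\nu^-(\rho)$ had a geometrically infinite component $Y$ met essentially by $\alpha$, then $H(\mu^+,\mu^-)$ would contain infinitely many curves appearing on geodesics supported in $Y$ and accumulating on the ending lamination; all of these lie in $\CH$, all but finitely many intersect $\alpha$ essentially (the ending lamination fills $Y$), and their tubes lie arbitrarily far down in $\hhat M(\mu^+,\mu^-)$, hence below $\hhat F$ --- so by the observation above they would be distinct members of $\mathcal C(\hhat\alpha,L)$, contradicting $|\mathcal C(\hhat\alpha,L)|\le R$. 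Therefore $\alpha$ is homotopic either into the parabolic locus of $\nu^-(\rho)$, in which case $\ell_{\nu^-(\rho)}(\alpha)=0$, or into a geometrically finite component $Z$ of the complement of the parabolic locus of $\nu^-(\rho)$, carrying a finite-area hyperbolic structure $X_Z$; it remains to bound $\ell_{X_Z}(\alpha)$.

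Then I would invoke the length estimates of \cite{ELC1,ELC2}, together with the refinements of \cite{BBCM}, to bound $\ell_{X_Z}(\alpha)$ from above by an $S$--dependent function of the following data, all of it concentrated in the part of the hierarchy between the level of $\hhat F$ and the bottom: the curve complex distance $d_Z(\mu_F,\nu^-)$; the subsurface coefficients $d_W(\mu_F,\nu^-)$ for essential subsurfaces $W\subseteq Z$ met essentially by $\alpha$; the twisting coefficients $d_\beta(\mu_F,\nu^-)$ for curves $\beta$ with $i(\alpha,\beta)>0$; and $\ell_{\hhat F}(\hhat\alpha)\le L_h(S)L_0$. Each of the first three is controlled using the observation above. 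For the distance: the intermediate vertices of the $\CC(Z)$--geodesic from $\pi_Z(\mu_F)$ to $\pi_Z(\nu^-)$ lie in $\CH$; all but a bounded initial segment of them lie at distance $\ge 2$ from $\alpha$ (because, $\hhat F$ being thick, $\alpha$ is uniformly close to this geodesic at the level of $\hhat F$) and hence meet $\alpha$ essentially, and their tubes lie below $\hhat F$ by the ordering of Section \ref{hierarchies etc} --- so there are at most $R+O(1)$ of them, bounding $d_Z(\mu_F,\nu^-)$. For the other two: if some $d_W(\mu_F,\nu^-)$ or $d_\beta(\mu_F,\nu^-)$ is large, then by the short-curve estimates of \cite{ELC1,ELC2} the curve $\boundary W$, respectively $\beta$, is short in $N_\rho$ with length bounded above by a function of the coefficient that tends to $0$, and since $\alpha$ meets $W$, respectively $\beta$, essentially and the supporting geodesic lies between the level of $\hhat F$ and the bottom, the relevant curve belongs to $\mathcal C(\hhat\alpha,L)$; thus hypotheses (2) and (3) bound both the number of such $W$ (respectively $\beta$) by $R$ and the size of each coefficient in terms of $\epsilon$. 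Feeding these bounds, together with $\ell_{\hhat F}(\hhat\alpha)\le L_h(S)L_0$, into the length estimate yields $\ell_{X_Z}(\alpha)<L_1$ for a constant $L_1=L_1(S,R,L_0,\epsilon,L)$.

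The main obstacle is exactly this ``reverse-engineering'': producing the explicit upper bound for $\ell_{\nu^-(\rho)}(\alpha)$ in terms of precisely those hierarchy coefficients that are witnessed by bounded-length curves crossing $\alpha$ and lying below $\hhat\alpha$, and in particular the bookkeeping identifying the ``vertical'' combinatorial distance between $\hhat F$ and the bottom with an explicit family of members of $\mathcal C(\hhat\alpha,L)$. This forces careful use of the topological ordering of Section \ref{hierarchies etc}, of Lemma \ref{order surface}, and of the compatibility between the level surface $\hhat F$ and the product structure of $\hhat M(\mu^+,\mu^-)$; one must also keep track of the transversal curves of $\mu_F$, not merely of $\alpha$ itself, and treat uniformly the quasifuchsian, geometrically finite, and finite-area cases for $\nu^-(\rho)$.
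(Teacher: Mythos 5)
Your overall strategy---reduce $\ell_{\nu^-(\rho)}(\alpha)$ to subsurface projection coefficients, and bound each coefficient by exhibiting, whenever it is large, members of $\CC(\hhat\alpha,L)$---is the same as the paper's, and you correctly locate the main difficulty. But two of your steps are genuine gaps. First, the marking $\mu_F$: hypothesis (4) only says that the level surface $F$ avoids the thin part of $N_\rho$; it gives no bound on the diameter of $F$, so in general there is no marking of uniformly bounded length supported on $F$ containing $\alpha$. Since your bounds on $d_Z(\mu_F,\nu^-)$ and $d_W(\mu_F,\nu^-)$ all pass through $\mu_F$, the argument collapses at this point. The paper works directly with $\alpha$ instead: it first shows (Lemma \ref{no ending lamination}) that $\alpha$ lies in the $\ep'$-thick part of a geometrically finite component $Z$ of $\nu^-(\rho)$---using the Epstein--Marden--Markovic convex core estimate to show that any curve short in $Z$ and crossing $\alpha$ would lie below $\hhat\alpha$, violating (3)---and then bounds $d_W(\alpha,\nu^-(\rho))$ using pleated surfaces realizing $\alpha$ and $\partial W$, whose geometry is controlled by $L_0$ rather than by $F$, together with the projection estimates of \cite{BBCM}. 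Note that without the thick-part step your final length estimate is incomplete in any case: even with all coefficients $d_W(\alpha,\nu^-)$ bounded, $\ell_{X_Z}(\alpha)$ blows up if $\alpha$ crosses a very short curve of $X_Z$, and ruling that out requires exactly this argument.

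Second, your ``key elementary observation'' is not elementary and is not a consequence of Lemma \ref{order surface}. The model map carries $U(\gamma)$ onto $\MT(\gamma)$ only when $\ell_\rho(\gamma)<\ep_h(S)$; for hierarchy curves of moderate length the geodesic representative $\gamma^*$ is connected to the image $\gamma^M$ of the model core curve only by a homotopy through curves of length at most $L_h(S)$, and that homotopy may cross $\hhat F$ or $\hhat\alpha$, so the topological position of $\gamma^*$ relative to $\hhat\alpha$ need not match that of $U(\gamma)$ relative to the level of $\hhat F$. The correct statement is that the implication holds for all but a uniformly bounded number of hierarchy curves, and proving even that is the content of the ruled-homotopy and counting argument in the paper's Lemma \ref{thick surface} (steps (1)--(4) there, plus the bound $m(L_h)$ on the number of homotopy classes of bounded-length curves meeting $g(X)\cup Y\cup Y'$). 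You flag this conversion from the model to the true geodesic representatives as the ``main obstacle'' at the end, but the body of your argument uses it as though it were already established.
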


The idea of the proof is the following: If $\alpha$ has large, or infinite,
length in $\nu^-(\rho)$, then there
must be a subsurface $W$ in $S$ so that the projection distance
$d_W(\alpha,\nu^-(\rho))$ is large or infinite. This forces the hierarchy $H(\mu^+(\rho),\mu^-(\rho))$
to have a long geodesic $h_W$ associated to $W$, which corresponds to a large
region in the model manifold of $N_\rho$ containing a large number of
bounded-length curves (corresponding to hierarchy curves) isotopic
into $W$. In the model manifold, the topological placement of these
curves corresponds to their location along the hierarchy geodesic.
Thus, using what we know about the projection of $\alpha$ to $h_W$,
we conclude that a large number of the geodesic representatives of these curves must not
lie above  $\hhat\alpha$ in $N_\rho$ -- hence  must belong
to $\mathcal C(\alpha,L)$. The hypotheses bound the number of
such curves and therefore the size of $d_W(\alpha,\nu^-(\rho))$. The
main technical difficulties in the proof involve converting the
relatively nice picture in the model manifold to the slightly messier
arrangement of true geodesic representatives in $N_\rho$.

\begin{proof}
We  first observe that $\alpha$ lies in the thick part of a Riemann surface component of $\nu^-(\rho)$.

\begin{lemma}{no ending lamination}{}{}
With the same assumptions as Theorem \ref{upper bound},
there exists $\ep' < \ep$ such that the curve $\alpha$ is homotopic into the $\epsilon'$-thick part
$Z'$ of a Riemann surface component $Z$ of $\nu^-(\rho)$.
\end{lemma}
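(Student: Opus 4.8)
The plan is a proof by contradiction: I would show that if $\alpha$ fails to be homotopic into (the thick part of) a geometrically finite component of the bottom end invariant $\nu^-(\rho)$, then one can produce infinitely many bounded-length curves in $N_\rho$ that meet $\alpha$ essentially and do not lie above $\hhat\alpha$, contradicting hypothesis~(2) of Theorem~\ref{upper bound}.

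First I would record the consequences of the Bilipschitz Model Manifold Theorem in this situation. Fixing generalized markings $(\mu^+,\mu^-)$ associated to $\nu^\pm(\rho)$, there is an $L_h(S)$-bilipschitz homeomorphism $f_\rho\colon N_\rho\to M(\mu^+,\mu^-)$; the model is a copy of ${\rm int}(S)\times\R$ carrying a tube $U(\gamma)$ for each $\gamma\in\CH$, with $\ell_\rho(\gamma)\le L_h(S)\le L$ for every such $\gamma$; and short curves of $\nu^\pm(\rho)$ lie in $\CH$. I would also make the harmless reduction that $\alpha$ is \emph{not} homotopic into the parabolic locus $p$ of $\nu^-(\rho)$, since then $\ell_{\nu^-(\rho)}(\alpha)=0$ and the conclusion of Theorem~\ref{upper bound} holds for any $L_1$. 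Under this reduction $\alpha$ has a geodesic representative in $N_\rho$, and $\ell_{\nu^-(\rho)}(\alpha)$ equals $\ell_Z(\alpha)\in(0,\infty)$ when $\alpha$ is homotopic into a geometrically finite component $Z$ of $S\setminus p$ (in which case the lemma follows with $\ep'=\tfrac12\min\{\ell_Z(\alpha),\ep\}$) and equals $+\infty$ otherwise. So it suffices to rule out $\ell_{\nu^-(\rho)}(\alpha)=+\infty$.

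Assume $\ell_{\nu^-(\rho)}(\alpha)=+\infty$. If $\alpha$ crosses the parabolic locus, pick $c\subset p$ with $i(c,\alpha)>0$; then $\alpha$ essentially enters a component $Y$ of $S\setminus p$ incident to $c$, and either $Y$ is geometrically infinite (the main case below) or every component of $S\setminus p$ incident to $c$ is geometrically finite --- in which case $c$ is a two-sided parabolic whose cusp region any representative of $\alpha$ on a level surface must meet, so $\hhat\alpha$ meets the $\ep$-thin part of $N_\rho$, contradicting hypothesis~(4). In the main case $\alpha$ essentially meets a geometrically infinite component $Y$ of $S\setminus p$, with ending lamination $\lambda_Y$; since $\lambda_Y$ fills $Y$ and $\alpha$ carries an essential arc or curve of $Y$, every simple closed curve in $Y$ sufficiently close to $\lambda_Y$ in the Hausdorff topology meets $\alpha$ essentially. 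As $d_Y(\mu^+,\mu^-)=\infty$, the hierarchy $H(\mu^+,\mu^-)$ supports a geodesic $h_Y$ in $\CC(Y)$, infinite toward $\lambda_Y$; let $\gamma_1,\gamma_2,\dots$ be its vertices far along that ray. Then $\gamma_i\in\CH$, so $\ell_\rho(\gamma_i)\le L_h(S)\le L$, and $i(\gamma_i,\alpha)>0$ for all large $i$. In the model the tubes $U(\gamma_i)$ are ordered along $h_Y$ and escape monotonically out the $\nu^-$ end, so for all but finitely many $i$ the geodesic representative $\gamma_i^*$ (which, having length at most $L_h(S)$, is confined near its model tube) lies strictly below the level surface carrying $f_\rho(\hhat\alpha)$; hence, by a standard ordering argument in $S\times\R$ (cf.\ Lemma~\ref{order surface}) using $i(\gamma_i,\alpha)>0$, the curve $\gamma_i$ does not lie above $\hhat\alpha$ in $N_\rho$. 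Thus all but finitely many $\gamma_i$ belong to $\CC(\hhat\alpha,L)$, so $\CC(\hhat\alpha,L)$ is infinite, contradicting hypothesis~(2). This contradiction gives $\ell_{\nu^-(\rho)}(\alpha)<\infty$ and hence the lemma.

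The step I expect to be the main obstacle is the ordering bookkeeping at the end of the previous paragraph: transporting ``the model tube $U(\gamma_i)$ lies below $f_\rho(\hhat\alpha)$'s level surface'' through $f_\rho$ (which respects the product structure only up to proper isotopy), checking that the genuine geodesic $\gamma_i^*$ really is confined near $U(\gamma_i)$ and so ends up strictly below that level surface, and then deducing from ``$\gamma_i^*$ strictly below the level surface of $\hhat\alpha$'' and $i(\gamma_i,\alpha)>0$ --- via Lemma~\ref{order surface} --- that $\gamma_i$ does not lie above $\hhat\alpha$ (the point being that a homotopy pushing $\gamma_i^*$ up to $+\infty$ would have to cross that level surface in a loop homotopic to $\gamma_i$, which must meet $\hhat\alpha$). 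The other delicate point is the sub-case where $\alpha$ crosses the parabolic locus with only geometrically finite incident pieces, which must be handled through the cusp geometry of the pinched curve (showing its two-sided cusp region is unavoidable for a level-surface representative of $\alpha$) rather than through an ending lamination. Finally, I note that a choice of $\ep'$ that is \emph{uniform in $\rho$}, if needed downstream, is not produced by this lemma; here $\ep'$ may depend on $\rho$, and the uniformity for Theorem~\ref{upper bound} comes from its subsequent quantitative estimates.
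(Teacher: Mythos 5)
There is a genuine gap, and it sits exactly where the real content of the lemma lies: the geometrically finite case. You dispose of it in one line by setting $\ep'=\frac12\min\{\ell_Z(\alpha),\ep\}$, but that choice only prevents $\alpha$ itself from being a short curve of $Z$; it does nothing to prevent the geodesic representative of $\alpha$ in $Z$ from crossing a very short geodesic $\beta$ of $Z$ and hence running through the $\ep'$-thin part, in which case $\alpha$ is \emph{not} homotopic into $Z'$. Ruling this out is what most of the paper's proof is devoted to: one fixes $\delta(L_0)$ so that a loop of length at most $L_0$ meeting $\MT_\delta(\beta)$ must be homotopic to a power of $\beta$, sets $\ep'=\min(\delta(L_0),\ep)$, and then uses the Epstein--Marden--Markovic bound on lengths in the convex core boundary together with the nearest-point retraction to show that any $\beta$ with $\ell_Z(\beta)<2\ep'$ has $\beta^*$ lying below $\hhat\alpha$; if such a $\beta$ crossed $\alpha$ it would be an element of $\mathcal C(\hhat\alpha,L)$ of length less than $\ep$, violating hypothesis (3). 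Note that the resulting $\ep'$ depends only on $L_0$ and $\ep$, not on $\rho$ or on $\ell_Z(\alpha)$. This uniformity is needed downstream: the constant $A_0$ in the proof of Theorem \ref{upper bound} depends on $\ep'$ and on the length bounds for $\mu'$, which come precisely from $Z'$ being $\ep'$-thick, so an $\ep'$ depending on the quantity $\ell_Z(\alpha)$ one is trying to bound would make the subsequent argument circular. Your closing remark that a $\rho$-dependent $\ep'$ suffices is therefore a misreading of how the lemma is used.

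A second, smaller flaw is the subcase in which $\alpha$ crosses a component $c$ of the parabolic locus all of whose incident complementary components are geometrically finite. There is no reason that a level surface satisfying hypothesis (4) must meet the cusp region of $c$: for a geometrically finite $\rho$ with $c$ pinched only in $\nu^-$, the upward-facing component of the convex core boundary gives a level surface avoiding all thin parts while $\alpha$ still crosses $c$. The intended argument needs no case division on the incident components at all: by the convention stated just before Theorem \ref{upper bound}, a curve in the parabolic locus of $\nu^-$ lies below every curve, hence does not lie above $\hhat\alpha$, so $c$ is a member of $\mathcal C(\hhat\alpha,L)$ of length $0<\ep$, contradicting hypothesis (3). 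Your treatment of the geometrically infinite case is essentially the paper's, which simply produces a sequence of hierarchy curves exiting the downward-pointing degenerate end and contradicts hypothesis (2).
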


\begin{proof}
If a component $p$ of the parabolic locus of $\nu^-(\rho)$ 
crosses $\alpha$ essentially, then the cusp associated to $p$ is downward-pointing
and hence $p$ lies below $\alpha^*$. However, this contradicts the fact that 
$\mathcal C(\alpha,L)$ contains no curves of length less than $\ep$.
If $\alpha$ essentially intersects a subsurface  $W$ which supports an ending
lamination in $\nu^-(\rho)$, then there exists a sequence $\{\beta_n\}$ of hierarchy curves
so that $\{\beta_n^*\}$ exits the downward-pointing simply degenerate end with base surface $W$.
However, this contradicts the fact that $\mathcal C(\hhat\alpha,L)$  contains only finitely many curves.
Therefore, $\nu^-(\rho)$ has a
Riemann-surface component supported on a subsurface $Z$ containing
$\alpha$. 

Choose $\delta=\delta(L_0)>0$ so that, if a homotopically non-trivial curve $\gamma$ in a hyperbolic 3-manifold
is of length at most $L_0$ and intersects 
$\MT_{\delta}(\beta)$ for some primitive curve $\beta$, then $\gamma$ is homotopic to a power of  $\beta$.

Suppose that $\beta$ is a curve such that $\ell_Z(\beta) < 2\ep'$ where $\ep' = \min(\delta(L_0),\ep)$.
Theorem 3.1 in Epstein-Marden-Markovic \cite{EMM} implies that $\beta$ has a representative
$\hhat \beta$ on the bottom boundary of the convex core which has length at most $2\ep'\le\ep$
So, $\hhat\beta$ lies inside $\MT_{\ep'}(\beta)$ and
one may then connect $\beta^*$ to $\hhat\beta$ by an annulus $A_0$ in the convex core which
is entirely contained within  $\MT_{\epsilon'}(\beta)$. We construct a homotopy $A$  from $\beta^*$ to $-\infty$,
by concatenating $A_0$ with an annulus in $r^{-1}(\hhat\beta)$ where $r$ is the nearest point retraction of $N_\rho$ 
onto its convex core. Since $r(\hhat\alpha)$ has length at most $L_0$, it must be disjoint from $\MT_{\ep'}(\beta)$
and hence from $A$. Therefore, $\beta^*$ lies below $r(\hhat\alpha)$. Since $r(A)=A_0$, $A$ is also disjoint
from $\hhat\alpha$, so $\beta^*$ also lies below $\hhat\alpha$. However, this contradicts our assumptions,
so $\alpha$ must be homotopic into the $\ep'$-thick part of $Z$.
\end{proof}

Let $\mu^-(\rho)$ be the generalized marking of $S$ which we associate with $\nu^-(\rho)$,
and let $\mu'$ be the restriction of $\mu^-(\rho)$ to $Z'$. 
Notice that, assuming $\ep'$ is small enough,  
$\partial Z'$ is contained in $\mu^-(\rho)$, so $\base(\mu')$ consists of the essential
curves in $Z'$ which are contained in $\base(\mu^-(\rho))$ and the transversals in $\mu'$ are
simply the transversals in $\mu^-(\rho)$ to the curves in $\base(\mu')$. Since $Z'$ is $\ep'$-thick,
both the base curves and the transversals in $\mu'$ have uniformly bounded length.
Therefore, $l_Z(\alpha)$
can be bounded in terms of the complexity of its intersection with
$\mu'$, and this in turn is controlled by the subsurface projections
of $\alpha$ and $\mu'$, as in Theorem 6.12 of \cite{masur-minsky2}
or Lemma 2.3 of \cite{BBCL}.
In particular in order to bound $l_Z(\alpha)$ it suffices to bound
\begin{equation}\label{W alpha bound}
  \sup_W \ d_W(\alpha,\mu') = \sup_W \  d_W(\alpha,\nu^-(\rho))
\end{equation}
where $W$ varies over all subsurfaces of $Z'$ intersecting $\alpha$. 

Theorem B of \cite{minsky:kgcc} tells us that when the diameter in $\CC(W)$
of the projections of curves in $S$ with a given length bound is
sufficiently large, $\boundary W$ must be short. In particular there
exists $A_0$,
depending only on $\ep'$ and on the length bounds on $\alpha$ and $\mu'$, such
that, if 
$$
d_W(\alpha,\mu') > A_0
$$
then  $\ell_\rho(\boundary W) <\ep'$. Since if $d_W(\alpha,\mu')\le A_0$ we
are done,  we may assume from now on that
$\ell_\rho(\boundary W) <\ep'\le \ep.$

Suppose now that $\alpha$ essentially intersects a component $\beta$ of
$\boundary W$ (in particular this must be the case if $W$ is an
annulus).
By assumption, since
$\ell_\rho(\beta)<\ep$, $\beta^*$ lies above $\hhat \alpha$. 
Since $\hhat\alpha$ lies on a level surface $F$ disjoint from $\beta^*$,
it follows that $\hhat\alpha$ lies below $\beta^*$.
Notice that one may homotope  $\hhat \alpha$ to $\alpha^*$ through
curves of length at most $L_0$.
None of the curves in
the homotopy can intersect $\MT_{\ep'}(\beta)$, since we can choose
$\ep' < \delta(L_0)$ (see the proof of Lemma \ref{no ending lamination}),
so $\alpha^*$ lies below $\beta^*$ as well.

Now Theorem 1.3 of \cite{BBCM} tells us that a geodesic of bounded length
that lies below $\beta^*$ (and overlaps it on $S$) must have
projection in $W$ close to the bottom end invariant. That is, there is a constant $A_1$
(depending on $L_0$ and $S$)
such that, if $\alpha^*$ lies below $\beta^*$ then
$$
d_W(\alpha,\nu^-) \le A_1. 
$$
Thus, we are done in this case.

The only remaining possibility is that $\alpha$ is a non-peripheral
curve contained in $W$. This case follows from the following lemma:

\begin{lemma}{thick surface}{}{}
Let $S$ be a compact surface.  Given $L_0$, $L\ge L_h(S)$, and $K$, there exists $D>0$ such that if
$\alpha$ is a curve on $S$ and 
$\hhat \alpha$ is a representative of
$\alpha$ in $N_\rho$ with length at most $L_0$,  $\alpha$ is  non-peripheral in an essential subsurface $W$ of $S$, and
$d_W(\alpha, \nu^-(\rho))>D$, then $\mathcal C(\hhat\alpha,L)$ contains at least $K$ curves.
\end{lemma}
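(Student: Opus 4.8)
The plan is to use the Bilipschitz Model Manifold Theorem to convert a large projection distance $d_W(\alpha,\nu^-(\rho))$ into a long tight geodesic $h_W$ in the hierarchy $H(\mu^+,\mu^-)$, and then to produce from the many vertices of $h_W$ a correspondingly large collection of bounded-length curves in $N_\rho$ that are forced, by their topological position in the model, to not lie above $\hhat\alpha$. First I would fix the constant $L$ (which is at least $L_h(S)$) and recall that by construction every curve $v\in\CH$ has $\ell_\rho(v)\le L_h(S)\le L$, so all hierarchy curves are automatically candidates for membership in $\mathcal C(\hhat\alpha,L)$; the only two conditions left to verify for such a $v$ are that $v$ intersects $\alpha$ essentially and that $v^*$ does not lie above $\hhat\alpha$. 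Since $\alpha$ is non-peripheral in $W$ and $\boundary W$ may or may not be short, I would choose the relevant vertices of $h_W$ to be the ones filling $W$ together with $\alpha$ — i.e. those far enough along $h_W$ from its endpoints — so that they necessarily intersect $\alpha$ essentially; this uses standard curve-complex geometry (a vertex at distance $\ge 3$ in $\CC(W)$ from $\pi_W(\alpha)$ fills $W$ relative to $\alpha$ and hence crosses it).

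Next I would locate $\hhat\alpha$ in the hierarchy picture. Transport $\hhat\alpha$ through the $L_h(S)$-bilipschitz map $f_\rho:N_\rho\to M(\mu^+,\mu^-)$ (it stays a curve of bounded length, so by the block structure of the model it lies within bounded distance of a bounded number of blocks, hence is associated to a uniformly bounded portion of $h_W$, say within distance $C=C(L_0,S)$ in $\CC(W)$ of some vertex $v_0$ on $h_W$). The hypothesis $d_W(\alpha,\nu^-(\rho))>D$ means the terminal portion of $h_W$ — the part near $\pi_W(\nu^-)$, which in the model corresponds to the part of the $W$-block region pointing toward the bottom end — has length at least roughly $D - (\text{error})$. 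All vertices $w$ on this terminal sub-geodesic that lie strictly before $v_0$'s block (more precisely, whose simplices are non-adjacent to and occur before $f_\rho(\hhat\alpha)$'s position) have the property that $U(w)$ lies below the level surface through $f_\rho(\hhat\alpha)$ in $\hhat M$: this is exactly the ordering statement recalled in Section \ref{hierarchies etc}, that tubes of $h_W$ are ordered by position along $h_W$, combined with Lemma \ref{order surface} to pass from "the core curve of $U(w)$ lies below $f_\rho(\hhat\alpha)$" to "$w^*$ lies below (hence not above) $\hhat\alpha$" back in $N_\rho$. Each such $w$ also intersects $\alpha$ essentially provided it is taken deep enough into $W$, so each such $w$ is a genuine element of $\mathcal C(\hhat\alpha,L)$.

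Counting the vertices on the terminal sub-geodesic of $h_W$ that lie below $\hhat\alpha$'s position and that fill $W$ with $\alpha$: the number of these is at least $d_W(\alpha,\nu^-(\rho)) - C' $ for a constant $C'=C'(L_0,S)$ absorbing the position error of $\hhat\alpha$, the "distance $\ge 3$" slack needed to guarantee essential intersection, and the constants relating $d_W(\alpha,\mu')$ to the length of $h_W$ (Lemma 6.2 of \cite{masur-minsky2}, Lemma 5.9 of \cite{ELC1}). Distinct vertices of a tight geodesic are distinct curves in $S$, and in a surface group distinct hierarchy curves have distinct (nonparabolic) geodesic representatives in $N_\rho$, so these give at least $d_W(\alpha,\nu^-(\rho)) - C'$ distinct elements of $\mathcal C(\hhat\alpha,L)$. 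Setting $D = K + C'$ then yields at least $K$ curves, which is the claim.

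The main obstacle I anticipate is the bookkeeping in the middle step: pinning down the position of $f_\rho(\hhat\alpha)$ along $h_W$ to within a controlled error purely from the length bound $L_0$, and making the ordering argument fully rigorous when $\hhat\alpha$ is merely a bounded-length curve on a level surface rather than a hierarchy curve itself — one must invoke Lemma \ref{order surface} carefully to relate the model-manifold ordering of the $U(w)$ against the level surface $F$ to the ordering of $w^*$ against $\hhat\alpha$ in $N_\rho$, and one must be sure that the curves $w$ one counts really do cross $\alpha$ (so condition that they intersect $\alpha$ essentially in the definition of $\mathcal C(\hhat\alpha,L)$ is met) rather than merely crossing $W$. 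Handling the case $W=S$ (where there is no $\boundary W$ to worry about but one must be slightly careful that $h_W$ is the main geodesic of the hierarchy) and the annular case $W$ an annulus around a short curve (where one instead uses the annular coefficient and twisting, as in the preceding paragraphs of the proof of Theorem \ref{upper bound}) are minor variants that should reduce to the same counting.
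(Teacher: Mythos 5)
Your overall strategy---a large $d_W(\alpha,\nu^-(\rho))$ forces a long geodesic $h_W$, whose vertices give many bounded-length curves forced to sit below the position of $\hhat\alpha$---is the same as the paper's, but there is a genuine gap at the step you yourself flag as ``the main obstacle'': the passage from the ordering of tubes in the model to the ordering of \emph{geodesic representatives} against $\hhat\alpha$ in $N_\rho$. The bilipschitz model map transfers ``$U(\beta)$ lies below $U(\gamma)$'' to a statement about $\beta^M$, the image in $N_\rho$ of the core of the tube, not about the closed geodesic $\beta^*$; and membership in $\mathcal C(\hhat\alpha,L)$ is defined in terms of $\beta^*$. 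The free homotopy from $\beta^M$ to $\beta^*$ can a priori sweep across $\hhat\alpha$ and reverse the topological order, and Lemma \ref{order surface} (which compares a homotopy equivalence of $S$ with a fixed curve) does not bridge this. The paper's proof devotes most of its length to exactly this point: it introduces a $1$-Lipschitz surface $g(X)$ realizing $\alpha$ and $\partial W$, a hierarchy curve $\gamma$ of bounded length on $X$ (via Lemma 3.1 of \cite{BBCM}), ruled homotopies $Y$ (from $\alpha^*$ to $\hhat\alpha$) and $Y'$ (from $\gamma^X$ to $\gamma^M$), shows by a four-step concatenation argument that any $\beta$ whose connecting homotopy $H_\beta$ from $\beta^M$ to $\beta^*$ avoids $g(X)\cup Y\cup Y'$ has $\beta^*$ not above $\hhat\alpha$, and then bounds by a constant $m(L_h)$ the number of homotopy classes of short curves that can meet $g(X)\cup Y\cup Y'$ (a bounded-diameter set plus pieces of Margulis tubes). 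Those boundedly many exceptional vertices are discarded and absorbed into $D$. Without some version of this exclusion-and-count, your claim that each of the $d_W(\alpha,\nu^-(\rho))-C'$ vertices yields an element of $\mathcal C(\hhat\alpha,L)$ is unjustified.

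A secondary gap: the existence and length of $h_W$ are governed by $d_W(\mu^+,\mu^-)$, not by $d_W(\alpha,\nu^-(\rho))$; Lemma 6.2 of \cite{masur-minsky2} and Lemma 5.9 of \cite{ELC1}, which you invoke, are stated in terms of the former. To get a long $h_W$ you must first show $d_W(\nu^+(\rho),\nu^-(\rho))\ge d_W(\alpha,\nu^-(\rho))-a$, which the paper obtains from Theorem 1.2 of \cite{BBCM} (projections of curves of length at most $L_0$ lie within bounded Hausdorff distance of a $\CC(W)$-geodesic from $\pi_W(\nu^+)$ to $\pi_W(\nu^-)$); your step of ``locating $f_\rho(\hhat\alpha)$ along $h_W$'' presupposes the long geodesic it is meant to produce. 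Your treatment of essential intersection with $\alpha$ (discarding the $O(1)$ simplices of $h_W$ whose curves fail to cross $\alpha$) does match the paper and is fine.
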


\begin{proof}
Let $g:X\to N_\rho$ be a 1-Lipschitz surface that realizes $\alpha$ and $\partial W$, i.e. $X$ is a finite
area hyperbolic surface homeomorphic to $S$, $g$ is a 1-Lipschitz map in the homotopy class of $\rho$ and $g$ 
takes the geodesic representatives of $\alpha$ and $\partial W$ in $X$ to their geodesic representatives
in $N_\rho$ by an isometry. (For example, one may choose a pleated surface, see \cite[Chapter I.5.3]{CEG}).
By Lemma 3.1 in \cite{BBCM} there exists a multicurve $\Gamma$ of hierarchy curves, each of which has
length at most $L_2=L_2(S,L_0)$ on $X$
such that there are no hierarchy curves supported on the complement of $\Gamma$.

Theorem 1.2 in \cite{BBCM}  tells us that
the projection to $W$ of the set of curves of length at most $L_0$ in $N_\rho$ lies in
a uniformly bounded Hausdorff distance of a $\CC(W)$-geodesic joining
$\pi_W(\nu^+(\rho))$  to $\pi_W(\nu^-(\rho))$. So, there exists a constant $a=a(S,L_0)$
so that 
$$d_W(\nu^+(\rho),\nu^-(\rho))>d_W(\alpha,\nu^-(\rho))-a$$

Choose $\epsilon_2>0$ small enough that every curve on $S$ whose geodesic representative in $N_\rho$
has length at most $\epsilon_2$ lies in the hierarchy and such that any geodesic of length less than $\ep_2$ on
a hyperbolic surface cannot intersect another geodesic of length at most $L_2$.
If $d_W(\nu^+(\rho),\nu^-(\rho))$ is sufficiently large, then $W$ is a domain in the hierarchy, see \cite[Lemma 5.9]{ELC1}.
So, combining this fact with Theorem B from \cite{minsky:kgcc}, we see that
there exists  $D_1=D_1(S,\ep_2)$ so that if $d_W(\nu^+(\rho),\nu^-(\rho))>D_1$,
then $\ell_\rho(\partial W)<\ep_2$ and $W$ is a domain in the hierarchy.  We may assume that $D>D_1+a$, so that
this is the case.

Since $g$ is 1-Lipschitz and realizes
$\partial W$, no curve in $\Gamma$ can intersect $\partial W$ and $\partial W$ must be contained in $\Gamma$.
Moreover, since $W$ is a domain in the hierarchy,
there exists 
$\gamma\in\Gamma\cap \CC(W)$. Let $\gamma^X$ be a representative of $\gamma$ on $g(X)$ of
length at most $L_2$.

Let $M_\rho=M(\mu^+(\rho),\mu^-(\rho))$ and let $f_\rho:M_\rho\to N_\rho$ be the model map. 
If $\beta$ is a hierarchy curve, let $\beta^M$ be the image in $N_\rho$
of the core curve of $U(\beta)$. Since $\ell_X(\alpha^*)\le L_0$ and \hbox{$\ell_X(\gamma^X)\le L_2$},
\hbox{$d_W(\alpha,\gamma)<c=c(S,L_0,L_2)$}. Therefore, see \cite[Lemma 2.6]{BBCM}, there exists $b=b(S,L_0,L_2)$, so
that there are at least \hbox{$d_W(\alpha,\nu^-(\rho))-b$} curves $\beta$
on $h_W$  so that  $U(\beta)$ lies below 
$U(\gamma)$ in $M_\rho$. (In the simple situation when $\gamma$ lies on $h_W$, we may take $b=c+1$ and the curves
are simply curves in the vertices of $h_W$ which precede but are not adjacent to the vertex containing $\gamma$.)
\realfig{beta-range}{When $\beta$ lies in the indicated range, on the
  part of $h_W$ to the left of $\gamma$, $U(\beta)$ lies below
  $U(\gamma)$ in the model $M_\rho$.}
If $U(\beta)$ lies below 
$U(\gamma)$ in $M_\rho$, then  $\beta^M$ lies below $\gamma^M$ in $N_\rho$.
In order to complete the proof, it suffices to bound from above the number of such curves whose
geodesic representatives lie above $\hhat\alpha$.

Let $Y$ be a ruled homotopy from $\alpha^*=\alpha^X$ to $\hhat\alpha$, and let
$Y'$ be a ruled homotopy from $\gamma^X$ to $\gamma^M$ (see Figure
\ref{beta-alpha}).
\realfig{beta-alpha}{If $H$ avoids $g(X) \union Y\union Y'$ then the
  ordering of $\hhat \alpha$ and $\beta^*$ is consistent with that of
  $\gamma^M$ and $\beta^M$.}
We claim that if $\beta$ is a curve in $h_W$ so that  $\beta$ overlaps $\alpha$, $\beta^M$ lies
below $\gamma^M$, and there is a
homotopy $H$ of $\beta^M$ to $\beta^*$ that is disjoint from 
$g(X)\union Y\union Y'$, then 
$\beta^*$ does not lie above $\hhat\alpha$.
We can prove this in a few steps.

\begin{enumerate}
\item Since $\beta^M$ is disjoint from $Y'$,  $\gamma^X$ lies above
  $\beta^M$. One concatenates $Y'$
  with the homotopy of $\gamma^M$ to $+\infty$, to obtain a homotopy
  of $\gamma^X$ to $+\infty$ disjoint from $\beta^M$.
\item Since $\beta^M$ is   disjoint from $g(X)$, $\alpha^X$ lies above
  $\beta^M$. Using Lemma \ref{order surface}, the fact that
  $\gamma^X$ lies above $\beta^M$ implies that $g$ lies above
  $\beta^M$, and therefore that $\alpha^X$ does as well. 
\item Since $\beta^M$ is disjoint from $Y$, $\hhat\alpha$ lies above
  $\beta^M$. As in (1), one concatenates $Y$
  with the homotopy of $\alpha^X$ to $+\infty$, to obtain a homotopy
  of $\alpha^X$ to $+\infty$ disjoint from $\beta^M$.
 
  \item Since $\hhat\alpha$ lies above $\beta^M$ and $\alpha$ overlaps $\beta$, $\beta^M$ does {\em
    not} lie above $\hhat\alpha$. Since $H$ is disjoint from
    $\hhat\alpha$, $\beta^*$ also does not lie above $\hhat\alpha$. 
\end{enumerate}

We next observe that
there is an upper bound $m(L_h)$ on the number of homotopy
classes of primitive curves of length at most $L_h$ that intersect $g(X)\cup Y\union Y'$.
Let $\ep_3>0$ be chosen so that an $\ep_3$-Margulis
tube cannot meet a curve of length $L_h$ which is not homotopic to a power of its
core curve. So, if a curve of length $L_h$ intersects $g(X)$ it must do so in
the image of the $\ep_3$-thick part of $X$, which consists of a bounded number
of pieces of bounded diameter. Similarly, each of $Y$ and $Y'$ is a
union of a bounded diameter neighborhood of its boundary and possibly an annulus inside
an $\ep_3$-Margulis tube. Since a uniformly bounded number of homotopy classes of primitive curves of length
at most  $L_h$ can intersect a set of uniformly bounded diameter, we
obtain the desired bound $m(L_h)$.

Now, if $\beta$ is a curve such that $\beta^M$ lies below $\gamma^M$, but $\beta^*$ lies above
$\hhat\alpha$, then either $\alpha$ does not overlap $\beta$ or 
there exists a homotopy $H_\beta$ from $\beta^M$ to $\beta^*$  through curves of length
at most $L_h$ which intersects $g(X)\union Y\union Y'$.  Since
at most three simplices of $h_W$ contain
curves which do not overlap $\alpha$, and we have the above bound
$m(L_h)$, 
we conclude that there at most $d=d(S,L_0)$ such curves.
Therefore, $\mathcal C(\hhat\alpha,L)$ contains at least $d_W(\alpha,\nu^-(\rho))-b-d$ curves.
If we also choose $D>K+b+d$ the proof is complete.
\end{proof}

\end{proof}

\section{Uniform cores}
\label{uniform cores section}

In this section we show that all of the marked hyperbolic manifolds in
$AH_0(M,P)$ can be given compact cores whose geometry is controlled by
a finite family of ``models'', in a suitable sense. These
are quite different from the bilipschitz models for entire manifolds
that are constructed in \cite{ELC1,ELC2}, in that they give a model
only for a compact subset of the manifold and not for its ends.
Because a sequence
of elements of $AH_0(M,P)$ can degenerate along annuli in
$\Sigma(M,P)$, our notion of control must allow this kind of
degeneration. Similarly the possibility of changes of marking inside
the characteristic submanifold must be taken into account. We make
this precise with the following definition.

If $(M,P)$ is a pared manifold with pared incompressible boundary,
a {\em model core} is given by a triple $(\AAA,m,\ep)$ where
$\AAA$ is a robust collection of annuli in $M$, 
$m$ is a metric on $M_\AAA$, and $\ep\in(0,\mu_3/2)$.
If $\rho\in AH_0(M,P)$ we say that a model core $(m,\AAA,\ep)$ {\em
  controls} $\rho$ if there exists
an embedding $$ f : (M,P)  \to (N_\rho\ssm\MT_{\ep}(P),\partial \MT_{\ep}(P))$$
such that
\begin{enumerate}
\item $(f(M),f(P))$ is a relative compact core for $N_\rho\setminus\MT_\ep(P)$,
\item there is a homeomorphism $\phi:(M,P)\to (M,P)$ which is the
  identity on the complement of $\Sigma(M,P)$, such that $f\circ \phi$
is in the
  homotopy class determined by $\rho$,
  \item the restriction of $f$ to $M_{\mathcal A}$ is 2-bilipschitz
    with respect to $m$,
  \item $f(M_{\mathcal A}) \subset N_{\rho} \setminus
    \MT_{2\ep}(f(\AAA))$,
  \item $f(\partial M) \subset N_\rho\setminus (N_\rho)_{(0,\ep)}$.
\end{enumerate}
Here $\MT_\ep(f(\AAA))$ denotes the union of Margulis tubes
$\MT_\ep(f(A))$ for annuli $A\in\AAA$.

We call $f$ a {\em model core map} for $(\AAA,m,\ep)$.

\begin{theorem}{uniform cores}{}
If $(M,P)$ is a pared 3-manifold with pared
incompressible boundary then, for any $\ep < \mu_3/2$, 
there exists a finite collection of model cores $(\AAA,m,\ep)$
such that every $[\rho]\in AH_0(M,P)$ is controlled by one of them.
\end{theorem}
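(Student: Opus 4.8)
The proof of Theorem~\ref{uniform cores} proceeds by a compactness argument on sequences, using Corollary~\ref{ourrelcompactness} to break a divergent sequence into convergent pieces along a robust collection of annuli, and then using the theory of geometric limits of isomorphic Kleinian groups (Anderson--Canary--McCullough \cite{AC-cores,ACM}) to extract a uniformly bilipschitz model core for almost all terms of the sequence. Since $AH_0(M,P)$ need not be compact, I cannot do this in one step; instead I argue by contradiction: if no finite collection of model cores suffices, then there is a sequence $\{[\rho_n]\}$ in $AH_0(M,P)$ such that no $\rho_n$ is controlled by any of the (countably many) model cores produced so far, and in particular no subsequence is controlled by a single model core. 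Deriving a contradiction from this is the content of the argument.

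\medskip\noindent\textbf{Main steps.} First, given a sequence $\{\rho_n\}$ in $D(M,P)$, apply Corollary~\ref{ourrelcompactness} to pass to a subsequence, produce a robust collection $\AAA$ of essential annuli in $(M,P)$ and pared homeomorphisms $\phi_n$ supported on $\operatorname{window}(M,P)$ such that $\ell_{\rho_n\circ(\phi_n)_*}(\AAA)\to 0$, the remarked sequence converges up to conjugacy on $M_\AAA$, and every essential annulus in a component of $(M_\AAA,P_\AAA)$ has limit length bounded below. Replace $\rho_n$ by $\rho_n\circ(\phi_n)_*$; by Lemma~\ref{pieces are pared} each component of $(M_\AAA,P_\AAA)$ is a pared manifold with pared incompressible boundary. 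Second, on each component $(M',P')$ of $(M_\AAA,P_\AAA)$ the restricted sequence converges up to conjugacy to some limit $\rho'_\infty\in D(M',P')$; this limit has no new parabolics (the essential annuli in $(M',P')$ stay long), so $\rho'_\infty$ is geometrically finite --- here one uses Thurston's Relative Compactness Theorem \cite{thurstonIII} in the form of Theorem~\ref{relcompactness}, together with the fact that a convex cocompact (indeed minimally parabolic geometrically finite) structure on the pared manifold $(M',P')$ carries a canonical metric. The limit $N_{\rho'_\infty}$ has a relative compact core isometric to a fixed (up to the finitely many topological types and a bounded range of geometries --- see below) piece; pulling back through the approximate isometries $\psi_n$ of Lemma~\ref{basic geometric limit facts} gives, for large $n$, a $2$-bilipschitz embedding of each $M'$ into $N_{\rho_n}$ agreeing up to homotopy with $\rho_n$. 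Third, glue these embeddings across the Margulis tubes $\MT_\ep(A)$, $A\in\AAA$, whose core lengths go to $0$: the union of the images of the $M'$ together with these tubes forms a compact core for $N_{\rho_n}\setminus\MT_\ep(P)$, verifying conditions (1)--(5) of the definition of ``controls'' with $m$ the (limiting) metric on $M_\AAA$ assembled from the $\rho'_\infty$'s. Fourth, check finiteness: the robust collection $\AAA$ ranges over finitely many isotopy classes (there are finitely many isotopy classes of robust systems of annuli in $(M,P)$, since up to isotopy they are determined by submulticurves of the window base together with frontiers of torus pieces), the topological type of $(M_\AAA,P_\AAA)$ is then determined, and for each such type the space of minimally-parabolic geometrically-finite structures modulo the mapping class group --- i.e. the relevant quotient of $GF_0(M',P')$ --- is \emph{not} compact in general, so one does not directly get finitely many metrics $m$; this is where the real work lies.

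\medskip\noindent\textbf{The main obstacle.} The hard part is precisely the finiteness of the collection of metrics $m$ on $M_\AAA$. On an acylindrical piece this follows from Thurston's Compactness Theorem \cite{thurston1} and Anderson--Canary \cite{AC-cores} --- the quotient $AH(R)$ of an acylindrical piece $R$ is compact, so finitely many bilipschitz models suffice. But an interval-bundle component of the window (or a solid/thickened torus piece that gets cut out whole in $\AAA$) contributes a Teichm\"uller-space or Fricke-space worth of geometries, on which the relevant subgroup of the mapping class group acts; the quotient need not be compact. The resolution --- which I expect to be the technical heart of Section~\ref{uniform cores section} --- is that on the \emph{window} part of the core we do \emph{not} demand uniform control of the full metric but only that the window pieces attach correctly and that their \emph{non-window} boundary behaves well; more precisely, an interval bundle $\Sigma_i$ over $F_i$ has a boundary pattern whose core, after accounting for the mapping-class-group action by changes of marking (absorbed into $\phi_n$ in Corollary~\ref{ourrelcompactness}), and after using that the frontier annuli of $\AAA$ are either very short or of controlled length, lives in a \emph{compact} part of the moduli space. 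In other words, the changes of marking supplied by Corollary~\ref{ourrelcompactness} exactly quotient out the non-compact directions, and what remains is a compact family of geometries on $M_\AAA$, giving finitely many model cores after also invoking Anderson--Canary--McCullough \cite{AC-cores,ACM} to control the relationship between the algebraic limit $\rho'_\infty$ and the geometric limit so that the compact core of the geometric limit pulls back correctly. Once this compactness of the relevant moduli is established, a standard diagonal/subsequence argument contradicts the assumption that no finite family of model cores works, completing the proof.
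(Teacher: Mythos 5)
Your overall architecture matches the paper's: reduce to the statement that every sequence has a subsequence controlled by a single model core (Theorem \ref{core for sequence}), cut along the robust annuli of Corollary \ref{ourrelcompactness}, pass to geometric limits, pull back compact pieces via the approximating maps of Lemma \ref{basic geometric limit facts}, and glue along Margulis tubes. But there are two genuine gaps. First, your claim that each limit $\rho'_\infty$ is geometrically finite because the essential annuli of the piece stay long is false: the absence of new parabolics does not rule out degenerate ends. The paper's Lemma \ref{webordegenerate} shows only that each limit is a generalized web group \emph{or} a degenerate group without accidental parabolics, and the degenerate case requires a separate argument via the Covering Theorem \cite{canary-cover,thurston-notes} to produce an embedded carrier. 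Second, and more seriously, you never address how to embed the various pieces \emph{disjointly} in the geometric limit(s). Convergence up to conjugacy on each component of $M_\AAA$ separately does not give this: different components may require different conjugating sequences and may live in entirely different geometric limits (the paper's ``nearby'' equivalence relation and the basepoint bookkeeping of Lemma \ref{basepoints and limits}), and even within one geometric limit the disjointness of relative compact carriers rests on the precisely-embedded-system criterion of Anderson--Canary--McCullough (Proposition \ref{relcc}), which in turn needs the limit-set intersection control of Lemma \ref{onepoint}. This is Proposition \ref{partialcoreD}, the technical core of the section, and it is absent from your outline.

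A further omission: after assembling the pieces and tubes into a candidate core $C_n$, it is not automatic that $C_n$ is a compact core of $N_{\rho_n}$, nor that the natural homotopy equivalence $M\to C_n$ is homotopic to a homeomorphism respecting the decomposition --- the homeomorphism type of the limit pieces can differ from that of the approximates, as in the ``pages of a book'' examples of \cite{AC-pages}. The paper spends Steps 4 and 5 on exactly this, invoking Lemma \ref{acmtoplemma2} and the hypothesis $[\rho_n]\in AH_0(M,P)$ (which supplies an actual embedding $\kappa_n$ in the right homotopy class), followed by a Johannson-theoretic isotopy argument to make the homeomorphism preserve the annulus system. Finally, the obstacle you single out --- non-compactness of the moduli of geometries on the window pieces, and hence a possibly infinite family of metrics $m$ --- is not where the difficulty lies: once the sequential statement is proved, the metric is simply pulled back from the geometric limit of the chosen subsequence, and finiteness of the collection of model cores follows formally from the contradiction argument you already set up. The remarking $\phi_n$ in Corollary \ref{ourrelcompactness} has already absorbed the mapping class group action, so no separate compactness of moduli is needed.
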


\realfig{model-core}{A model core and its controlling map. Here $\AAA$
has 5 components (indicated by thick segments) whose associated solid
tori map to three Margulis tubes.}

Theorem \ref{uniform cores} will follow directly from the following statement:

\begin{theorem}{core for sequence}
If $(M,P)$ is a pared 3-manifold with pared
incompressible boundary and $\ep \in(0,\mu_3/2)$, then any sequence in $AH_0(M,P)$ has a
subsequence which is controlled by a single model core $(\AAA,m,\ep)$.
\end{theorem}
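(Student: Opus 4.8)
The plan is to feed the sequence through Corollary \ref{ourrelcompactness} to obtain, after passing to a subsequence, a robust collection $\AAA$ of essential annuli and pared homeomorphisms $\phi_n$ supported on $\mathrm{window}(M,P)$ so that $\ell_{\rho_n\circ(\phi_n)_*}(\AAA)\to 0$, the sequence converges up to conjugacy on $M_\AAA$, and every essential annulus in a component of $(M_\AAA,P_\AAA)$ has positive limiting length. Replacing $\rho_n$ by $\rho_n\circ(\phi_n)_*$, we may assume the $\phi_n$ are trivial; the homeomorphism $\phi$ in the definition of ``controls'' will be assembled at the end by composing the $\phi_n$'s (all supported in $\Sigma(M,P)$, hence in the complement of which they are the identity) — here one must check that, after passing to a further subsequence, the $\phi_n$ can be taken from a single mapping class so that the isotopy class is constant along the subsequence, which is where a finiteness argument is needed. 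By Lemma \ref{pieces are pared}, each component $(M',P')$ of $(M_\AAA,P_\AAA)$ is itself a pared manifold with pared incompressible boundary.

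Next I would analyze the components of $M_\AAA$ individually. For each component $M'$, the restriction of $\rho_n$ to $\pi_1(M')$ converges up to conjugacy (by construction of $\AAA$) to some $\rho'_\infty\in D(M',P')$; since $\AAA$ was chosen maximally, by item (3) of Corollary \ref{ourrelcompactness} no essential annulus in $(M',P')$ degenerates, so the limit $\rho'_\infty$ is itself geometrically finite with parabolic locus exactly $P'$ — this is where Thurston's Relative Compactness Theorem (Theorem \ref{relcompactness}), together with the no-new-parabolics consequence of maximality, gives that $[\rho'_\infty]\in GF(M',P')$ and hence has a well-behaved convex-core compact core. Using Lemma \ref{basic geometric limit facts}, choose a geometric limit $\hhat N'$ of the $\rho_n(\pi_1(M'))$ with covering $\pi':N_{\rho'_\infty}\to\hhat N'$ and the $K_n$-bilipschitz embeddings $\psi_n:Z_n\to N_{\rho_n}$, $K_n\to 1$. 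Take a compact core $C'$ of $N_{\rho'_\infty}$ whose geometry is fixed; its image under $\pi'$, pushed forward by $\psi_n$ for $n$ large, gives a $2$-bilipschitz embedded copy of $C'$ in $N_{\rho_n}$ in the right homotopy class (using item (4) of Lemma \ref{basic geometric limit facts} to identify the homotopy class). Pull back the metric to define $m$ on that component of $M_\AAA$; since $M'$ is homeomorphic to $C'$ through a fixed homeomorphism, this is a fixed metric independent of $n$.

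Then I would glue: the components of $M\ssm M_\AAA$ are solid tori, thickened tori, or regular neighborhoods of M\"obius bands in $I$-bundle components of $\Sigma(M,P)$, and for each of these the core curve has length $\to 0$ in $N_{\rho_n}$ (item (1) of Corollary \ref{ourrelcompactness}), so a regular neighborhood maps into the $\ep$-thin part $\MT_\ep$ of $N_{\rho_n}$; one arranges the bilipschitz models on the $M_\AAA$-components to match along the annuli $\mathcal N(\AAA)$ — since $\partial\mathcal N(\AAA)\subset\partial M_\AAA$ and these frontier annuli have degenerating core curves, the models agree there up to bounded distortion and the pieces can be isotoped to fit together into an embedding $f:M\to N_{\rho_n}$ satisfying conditions (1)--(5) of the definition of ``controls'', with $\AAA$ and $m$ as constructed and the given $\ep$. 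The main obstacle, I expect, is the bookkeeping needed to make a \emph{single} model core work for a whole subsequence: not merely to produce for each $n$ a bilipschitz core (that follows componentwise from the geometric-limit machinery), but to ensure that the remarking homeomorphisms $\phi_n$ stabilize to a fixed isotopy class, that the combinatorial type of $\AAA$ and its position in $\Sigma(M,P)$ is constant along the subsequence (there are only finitely many isotopy classes of robust collections, so a diagonal argument handles this), and that the gluing of the componentwise models along the frontier annuli is done uniformly so the resulting metric $m$ on $M_\AAA$ and the resulting embedding are genuinely $n$-independent and $2$-bilipschitz respectively. Deriving Theorem \ref{uniform cores} from Theorem \ref{core for sequence} is then the standard contradiction/compactness argument: if no finite collection of model cores sufficed, one could pick a sequence no member of which is controlled by any model core arising from earlier terms, contradicting the subsequential statement.
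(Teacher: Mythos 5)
Your outline follows the paper's general strategy (apply Corollary \ref{ourrelcompactness}, build models for the pieces of $M_\AAA$ in geometric limits, push them into $N_{\rho_n}$ and glue along thin parts), but it skips the two hardest points of the argument. First, you cannot simply take a compact core $C'$ of $N_{\rho'_\infty}$ and push its image forward under the covering $\pi'$ to the geometric limit: the image of a compact core under a covering map need not be embedded, and even when each piece does embed, the images of \emph{different} components of $M_\AAA$ whose basepoints stay at bounded distance may overlap in the geometric limit and hence in $N_{\rho_n}$. This is exactly what the paper's Proposition \ref{partialcoreD} is for: one groups the components of $M_\AAA$ into equivalence classes sharing a geometric limit, proves the limit subgroups form a precisely embedded system (Lemma \ref{onepoint} on intersections of limit sets, plus Lemma \ref{webordegenerate}), and then invokes the Anderson--Canary--McCullough criterion (Proposition \ref{relcc}) together with the Covering Theorem to obtain \emph{disjoint} relative compact carriers. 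Your proposal has no substitute for this step. Relatedly, your claim that maximality of $\AAA$ forces each limit $\rho'_\infty$ to be geometrically finite is false: maximality only rules out new annular parabolics, and the limit on a piece can perfectly well be degenerate (this is why Lemma \ref{webordegenerate} allows "generalized web group \emph{or} degenerate group," and why the degenerate case needs the Covering Theorem).

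Second, after assembling the pieces and tubes into a candidate core $C_n$, it is not automatic that the resulting map $M\to C_n$ is a homeomorphism, or even that $C_n$ is a compact core. The paper must first prove the assembled map is a homotopy equivalence (Step 4, via Lemma \ref{acmtoplemma2}), then use the hypothesis $[\rho_n]\in AH_0(M,P)$ to produce an actual embedding $\kappa_n$ in the right homotopy class, and finally carry out a delicate Johannson-style isotopy argument (Step 5) to make that homeomorphism respect the decomposition by $\AAA$ — nontrivial because of the Anderson--Canary phenomenon that homeomorphism type can change in limits. The obstacles you flag at the end (stabilizing the isotopy class of $\phi_n$, finiteness of combinatorial types of $\AAA$) are not where the difficulty lies; the difficulty is disjoint embedding of the carriers and promoting the homotopy equivalence to a decomposition-preserving homeomorphism.
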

Indeed, if Theorem \ref{uniform cores} were to fail, there would
be a sequence $\{[\rho_n]\}$ in $AH_0(M,P)$ such that no two elements can be
controlled by the same model core. This would contradict Theorem
\ref{core for sequence}.

The proof of Theorem \ref{core for sequence} can be briefly sketched
as follows.  Starting with a sequence $\{[\rho_n]\}$ in $AH_0(M,P)$,
we will identify a robust system of annuli $\AAA$ which breaks $M$
into pieces on which a subsequence converges, embed these pieces into
corresponding geometric limits of the subsequence, and then push those
embedded pieces into the approximating manifolds $N_{\rho_n}$ and join them along
Margulis tubes to obtain the desired compact cores.

Corollary \ref{ourrelcompactness} gives the basic convergence result,
identifying a robust collection of annuli $\AAA$ such that some
subsequence of $\{[\rho_n]\}$ converges on the pieces of the cut-up
manifold $M_\AAA$ after appropriate remarking on the characteristic
submanifold.  Our basic embedding result is Proposition
\ref{partialcoreD} in Section \ref{partial cores}, which shows that, if we consider a collection of
pieces of $M_\AAA$ which live in the same geometric limit, then they
admit disjoint embeddings into that geometric limit.  The machinery
for these embedding theorems is a variation on results of
Anderson-Canary \cite{AC-cores} and Anderson-Canary-McCullough \cite{ACM}, and Section
\ref{cores background} contains some background material and notation for these results.

In Section \ref{uniform core proof} we put these ingredients together to finish the proof. 
Pieces of $M_\AAA$ which live in one geometric limit can be mapped,
via the approximating maps of the geometric limiting process, into the
manifolds $N_{\rho_n}$. Pieces that live in different geometric limits
are easy to embed disjointly since their distance in $N_{\rho_n}$ goes
to $\infty$. Thus we can eventually disjointly embed all the pieces of
$M_\AAA$ in $N_{\rho_n}$. (Something similar was done in the
surface-group case in Brock-Canary-Minsky \cite{ELC2}.)
Note that these embeddings in the geometric limit are  pared
manifolds, where the frontier annuli ${\rm Fr(M_\AAA)}$, as well as
the annuli of the original pared locus $P$, map to the boundaries of
their associated cusps.

The next step will be to attach the
pieces along the Margulis tubes in $N_{\rho_n}$ that approximate the cusps, using solid or thickened tori (whose
diameter can be growing as $n$ grows) and obtain
compact cores $C_n$. The identification of $M$ with $C_n$ is at this
point on the level of homotopy, and it will require an additional step
to obtain a homeomorphism that respects the subdivision into
pieces. The geometry of the geometric limits gives us the metric on
the pieces which defines the core model $m$ for the subsequence.

\subsection{Relative compact carriers}
\label{cores background}
In this section, we recall a criterion from Anderson-Canary-McCullough \cite{ACM} which guarantees
that a collection of subgroups of a Kleinian group is associated to a disjoint collection of submanifolds
of the quotient manifold.

We first  recall some terminology from Kleinian groups.
Let $\Gamma$ be a finitely generated, torsion-free, non-elementary Kleinian group. 
We say that 
$\Gamma$ is {\em quasifuchsian} if its domain of discontinuity $\Omega(\Gamma)$ has 2 components, each of which is invariant under the entire group,
and is {\em degenerate} if $\Omega(\Gamma)$ is connected and simply connected.
A {\em component subgroup} of $\Gamma$ is the stabilizer of a component of its domain of discontinuity. We say that
$\Gamma$ is a {\em generalized web group} if all of its component subgroups are quasifuchsian. (This includes the case
where the domain of discontinuity is empty.)
An {\em accidental parabolic} for $\Gamma$ is a non-peripheral curve $c$ in $\Omega(\Gamma)/\Gamma$
which is associated to a parabolic element of $\Gamma$.

We say that a $\Gamma$-invariant collection $\mathcal H$ of disjoint horoballs in $\mathbb H^3$ is an {\em invariant system of horoballs}
for a torsion-free Kleinian group $\Gamma$ if every element of $\mathcal H$ is invariant under a non-trivial parabolic subgroup of $\Gamma$ and
every non-trivial parabolic subgroup of $\Gamma$ fixes some element of $\mathcal H$. If $\epsilon\in (0,\mu_3)$,
then the set of pre-images of the non-compact components of $N_{(0,\ep)}$, where $N=\mathbb H^3/\Gamma$,
is an invariant system of horoballs for $\Gamma$. If $\hhat\Gamma\subset \Gamma$ and $\mathcal H$ is an invariant
system of horoballs for $\Gamma$, then the collection $\hhat{\mathcal H}$ of elements of $\mathcal H$ based at fixed points of
non-trivial parabolic subgroups of $\hhat\Gamma$ is an invariant system of horoballs for $\hhat\Gamma$, which we call the 
{\em induced sub-collection of invariant horoballs} for $\Gamma$.
Notice that if $\mathcal H$ is the pre-image of the non-compact components of $N_{(0,\ep)}$, then $\hhat{\mathcal H}$
need not be the pre-image of the non-compact components of $\hhat N_{(0,\ep)}$. It is this unpleasant fact which
necessitates the introduction of this cumbersome terminology. However, for most purposes one can simply
imagine that our invariant system of horoballs is associated to the non-compact components of the $\epsilon$-thin part.

If $\mathcal H$ is an invariant system of horoballs for a finitely generated, torsion-free Kleinian group $\Gamma$,
let 
$$N^\mathcal H=\left(\mathbb H^3-\bigcup_{H\in\mathcal H} H\right)/\Gamma.$$
For example, if $\mathcal H$ is the pre-image of the non-compact components of $N_{(0,\ep_0)}$, then $N^0=N^\mathcal H$.
If $(M,P)$ is a relative compact core for $N^\mathcal H$, then $\Gamma$
has an accidental parabolic if and only if there is an essential annulus in $(M,P)$ joining a geometrically
finite component of $\partial_0(M,P)$ to a component of $P$. If  every geometrically finite component of 
$\partial_0(M,P)$ is incompressible and $\Gamma$ has no accidental parabolics, then it is either a generalized web group
or degenerate (see  \cite[Lemma 3.2]{ACM}).

We will make crucial use of a criterion which guarantees that a
collection of subgroups of a Kleinian group is associated to a
collection of disjoint submanifolds.  A finitely generated subgroup
$\Theta$ of a Kleinian group $\Gamma$ is {\em precisely embedded} if
it is the stabilizer, in $\Gamma$, of its limit set and if
$\gamma\in\Gamma-\Theta$, then there is a component of
$\Omega(\Theta)$ whose closure contains
$\gamma(\Lambda(\Theta))$ (here $\Lambda$ denotes the limit set of a Kleinian group). More generally, a collection
$\{\Gamma_1,\ldots,\Gamma_n\}$ of precisely embedded subgroups of
$\Gamma$ is a {\em precisely embedded system} if whenever $\gamma\in
\Gamma$ and $i\ne j$, then there is a component of $\Omega(\Gamma_i)$
whose closure contains $\gamma(\Lambda(\Gamma_j))$.

We now give a strong pared version of what it means for a subgroup to
be associated to a submanifold of the quotient manifold.  If $\hhat\Gamma$
is a subgroup of a torsion-free, finitely generated Kleinian group
$\Gamma$ with invariant system of horoballs $\mathcal H$,
then $(Y,Z)$ is a {\em relative compact carrier} for
$\Gamma$ if  $Y\subset N^{\mathcal H}$,
$Z=Y\cap\partial  N^{\mathcal H}$,
and $(Y,Z)$ lifts to a relative compact core for $\hhat N^{\hhat{\mathcal H}}$, where
$\hhat N=\mathbb H^3/\hhat\Gamma$, 
$N=\mathbb H^3/\Gamma$ and $\hhat{\mathcal H}$ is the induced subcollection
of invariant horoballs for $\Gamma$.

The following result combines Lemma 4.1 and Proposition 4.2 in \cite{ACM}.

\begin{proposition}{relcc}
Let $\Gamma$ be a torsion-free Kleinian group with invariant system of horoballs $\mathcal H$  and 
let $\{\Gamma_1,\ldots,\Gamma_n\}$ be a collection of 
non-conjugate generalized web subgroups of $\Gamma$.
Then $\{\Gamma_1,\ldots,\Gamma_n\}$  is a precisely embedded
system if and only if there exists a disjoint collection $\{R_1,\ldots,R_n\}$ of compact 
submanifolds of $N^{\mathcal H}$
such that, for all $j$, $R_j$ is a relative compact carrier for $\Gamma_j$ and no component of $N^{\mathcal H}-R_j$ is a
compact twisted $I$-bundle whose associated $\partial I$-bundles lies in $\partial R_j$.
\end{proposition}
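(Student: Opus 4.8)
The plan is to deduce Proposition~\ref{relcc} from Lemma~4.1 and Proposition~4.2 of \cite{ACM}, after translating those statements into the language of invariant systems of horoballs and relative compact carriers set up above. The preliminary point to dispatch is that all the objects involved are well-defined: under the standing hypotheses, Bonahon's tameness theorem --- together with the remarks preceding the statement about the induced horoballs $\hhat{\mathcal H}$ --- shows that $N^{\mathcal H}$ and the analogous truncations of the covers associated to the $\Gamma_j$ are tame, with peripheral structure exactly along the given horoballs, so that relative compact cores, and hence relative compact carriers, exist and are well-defined up to isotopy, and each generalized web subgroup $\Gamma_j$ admits a carrier at all. Since \cite{ACM} phrase their results for the complement of the non-compact $\ep$-thin part rather than for an abstract horoball system, one also checks --- as indicated before the statement --- that the only features of the horoball system used in their arguments are tameness and the annulus/torus type of the peripheral surfaces, both insensitive to the discrepancy between $\hhat{\mathcal H}$ and the $\ep$-thin-part preimage for $\hhat\Gamma$; hence their proofs apply verbatim.

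For the direction ($\Leftarrow$), given a disjoint collection of carriers $R_1,\dots,R_n$ with no component of $N^{\mathcal H}\ssm R_j$ a compact twisted $I$-bundle meeting $\partial R_j$ in its $\partial I$-bundle, I would first invoke the single-subgroup case of Lemma~4.1 of \cite{ACM} to conclude that each $\Gamma_j$ is precisely embedded; the non-degeneracy hypothesis on $R_j$ is precisely what guarantees that the lifted carrier records the stabilizer of $\Lambda(\Gamma_j)$ and not a proper overgroup. To obtain the system condition, fix $i\neq j$ and $\gamma\in\Gamma$: lifting the covering data so that $R_i$ lifts to an embedded carrier in $\mathbb H^3/\Gamma_i$, the disjointness of $R_i$ from $R_j$ (and from the relevant $\Gamma$-translates of the lift of $R_j$) forces $\gamma(\Lambda(\Gamma_j))$ into the closure of a single complementary component of $\Lambda(\Gamma_i)$, which, since $\Gamma_i$ is a generalized web group, is a component of $\Omega(\Gamma_i)$ --- the required condition.

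For the direction ($\Rightarrow$), assume $\{\Gamma_1,\dots,\Gamma_n\}$ is a precisely embedded system. Then each $\Gamma_j$ is in particular precisely embedded, so Lemma~4.1 produces for each $j$ a relative compact carrier $R_j$ with the stated non-degeneracy. The remaining step --- and the one I expect to be the main obstacle --- is to realize the $R_j$ \emph{simultaneously and disjointly} inside $N^{\mathcal H}$; this is exactly the content of Proposition~4.2 of \cite{ACM}. The mechanism is that the system property --- that $\gamma(\Lambda(\Gamma_j))$ lies in the closure of a component of $\Omega(\Gamma_i)$ whenever $i\neq j$ --- is a nesting statement for limit sets which, via the dictionary between quasifuchsian component subgroups and their convex-hull boundaries, lets one homotope and then isotope the carriers off one another; once again the twisted $I$-bundle hypothesis is essential, since such a complementary component would allow a carrier to double back and obstruct the disjointness. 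The resulting disjoint $R_1,\dots,R_n$ complete the proof.
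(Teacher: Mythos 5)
Your proposal is correct and follows exactly the route the paper takes: the paper gives no independent argument for Proposition \ref{relcc}, stating only that it ``combines Lemma 4.1 and Proposition 4.2 in \cite{ACM},'' which is precisely the reduction you carry out (Lemma 4.1 for the single-subgroup equivalence, Proposition 4.2 for the simultaneous disjoint realization of the system). Your additional remarks on translating between the $\ep$-thin-part formulation of \cite{ACM} and the abstract invariant horoball systems used here are a reasonable elaboration of what the paper leaves implicit.
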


\subsection{Partial Cores}
\label{partial cores}

In this section we establish our basic embedding result for partially convergent sequences
in $D(M,P)$.
In our setting  we have a robust collection
$\AAA$ of essential annuli and a sequence $\{\rho_n\in D(M,P)\}$ which
is convergent up to inner automorphism on some subset of components
of $M_\AAA$ (this situation will arise, in Section \ref{uniform core
  proof}, using Corollary \ref{ourrelcompactness}). Assuming also that
the sequence has a geometric limit, and with additional assumptions
on parabolics, we will obtain a collection of
disjoint relative compact carriers in the geometric limit for the
convergent components.

The following convention will be used from now on: 
Given a component $R$ of $M_\AAA$ on which 
$\{\rho_n\}$ converges up to inner automorphism, 
there exists a sequence $\{g_n\}\subset\pi_1(M)$ such that
$\{\rho_n^{\rho_n(g_n)}|_{\pi_1(R)}\}$ converges in $D(\pi_1(R))$. 
We take $\rho^R\in D(R)$ to  be the limit of such a sequence. Notice that
$\rho^R$ is well-defined up to conjugacy in the geometric limit group.

\begin{proposition}{partialcoreD} 
  Let $(M,P)$ be a pared 3-manifold with pared incompressible boundary
  and let $\mathcal A$ be a robust collection of essential annuli in
  $(M,P)$.  Suppose that $\{\rho_n\}$ is a sequence in $D(M,P)$ which
  is convergent up to inner automorphism on a collection $\RR$ of
  components of $(M_\mathcal A,P_\mathcal A)$ so that
\begin{enumerate}
\item
$\ell_{\rho_n}(\mathcal A)\to 0$, 
\item
$\{\rho_n(\pi_1(M))\}$ converges geometrically to $\Gamma$, and
\item
if $B$ is an essential annulus in a component of $(M_\mathcal A,P_\mathcal A)$,
then \hbox{$\lim \ell_{\rho_n}(B)>0$}.
\end{enumerate}

If $\ep\in (0,\ep_0]$, then there exists a disjoint collection $\{ (Y_R,Z_R)\}_{(R,Q)\in\RR}$ of relative compact
carriers for $\{\rho^R(\pi_1(R))\}_{(R,Q)\in\RR}$ in $N^\ep$ where $N=\mathbb H^3/\Gamma$ and $N^\ep$ is
obtained from $N$ by removing the non-compact components of $N_{(0,\ep)}$.
\end{proposition}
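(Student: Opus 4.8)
The plan is to produce the relative compact carriers by combining the algebraic convergence on the pieces $R\in\RR$ with the geometric convergence of the ambient groups, then checking the precise embeddedness hypothesis of Proposition \ref{relcc}. First I would observe that for each $(R,Q)\in\RR$ the limit representation $\rho^R$ is discrete and faithful, and its image $\rho^R(\pi_1(R))$ lies in the geometric limit group $\Gamma$; this follows from Lemma \ref{basic geometric limit facts} applied to the convergent subsequence on $\pi_1(R)$, after precomposing with the appropriate inner automorphisms. Crucially, I need to know that $\rho^R(\pi_1(R))$ is a generalized web group: this is where hypothesis (3) enters. Since $(R,Q)$ is a pared manifold with pared incompressible boundary (Lemma \ref{pieces are pared}) and every essential annulus in $(R,Q)$ has a definite positive length in the limit $\rho^R$, there are no accidental parabolics for $\rho^R(\pi_1(R))$ coming from annuli joining a geometrically finite component of $\partial_0(R,Q)$ to a component of $Q$; thus by the criterion recalled from \cite[Lemma 3.2]{ACM}, $\rho^R(\pi_1(R))$ is either a generalized web group or degenerate. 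I would need to rule out — or rather accommodate — the degenerate case; most likely one argues that each component subgroup is still quasifuchsian because the incompressible pared boundary forces the relevant covers to be geometrically finite on their boundary subgroups, or one simply notes Proposition \ref{relcc} extends to this setting. (This is a point to be careful about in the writeup.)

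The main step is to verify that $\{\rho^R(\pi_1(R))\}_{(R,Q)\in\RR}$ is a precisely embedded system in $\Gamma$. I would argue this using the product structure of $M_\AAA$: distinct components $R, R'$ of $M_\AAA$ are separated in $M$ by annuli in $\AAA$, hence the subgroups $\pi_1(R),\pi_1(R')$ are ``separated'' in $\pi_1(M)$ — more precisely, any element of $\pi_1(M)$ conjugating one limit set near another must factor through the annular stabilizers, whose images become parabolic in the limit (by hypothesis (1), $\ell_{\rho_n}(\AAA)\to 0$). So in the geometric limit, the limit sets $\Lambda(\rho^R(\pi_1(R)))$ sit in the closures of distinct components of the domain of discontinuity of the various subgroups, separated by the parabolic fixed points corresponding to $\AAA$. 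Concretely, I would track, for each $R$, the sequence of basepoints $b_n^R$ and show (via Lemma \ref{basepoints and limits} and Lemma \ref{converge and basepoint}) that the geometric approximation maps $\psi_n$ send a neighborhood of $b_\rho^R$ into a region of $N_{\rho_n}$ that is separated from the analogous region for $R'$ by the Margulis tube $\MT_{\ep}(A)$ of an annulus $A\in\AAA$ lying between them; passing to the geometric limit, the corresponding horoball separates the limit sets, which is exactly the precise embeddedness condition.

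Once precise embeddedness is established, Proposition \ref{relcc} hands us the disjoint collection $\{R_j\}$ of relative compact carriers in $N^{\mathcal H}$, where $\mathcal H$ is the invariant system of horoballs for $\Gamma$ coming from the non-compact components of $N_{(0,\ep)}$; since $\mathcal H$ for this choice gives $N^{\mathcal H}=N^\ep$, these are relative compact carriers in $N^\ep$ as required. I would then need to rename and match the carriers $R_j$ to the pieces $(R,Q)$: each $R_j$ is a relative compact carrier for exactly one $\rho^R(\pi_1(R))$ by construction (the $\Gamma_j$ are non-conjugate — which itself needs a sentence, using that the $\pi_1(R)$ are non-conjugate in $\pi_1(M)$ and hence their limit images are non-conjugate in $\Gamma$), and the ``no twisted $I$-bundle'' output of Proposition \ref{relcc} is automatic here or can be arranged by absorbing such a bundle into $R_j$. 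The hard part, I expect, will be the precise embeddedness verification: translating the topological separation by $\AAA$ in $M$ into the dynamical separation of limit sets in the geometric limit $\Gamma$, keeping careful track of basepoints and of which horoball of $\mathcal H$ does the separating, since a priori the induced sub-collections $\hhat{\mathcal H}$ for the subgroups need not be the ``obvious'' thin-part horoballs (exactly the subtlety flagged in Section \ref{cores background}).
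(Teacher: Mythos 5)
There are two genuine gaps here, and the more serious one is exactly the point you flagged as "to be careful about." The degenerate case cannot be ruled out and Proposition \ref{relcc} does not extend to it: that proposition is a statement about precisely embedded systems of \emph{generalized web} subgroups, and its proof relies on the components of the domains of discontinuity being bounded by quasi-circles. Some of the limit groups $\rho^R(\pi_1(R))$ really can be degenerate, and for those the paper uses an entirely different mechanism: the Covering Theorem of \cite{canary-cover,thurston-notes} shows that a neighborhood of the end of the cover $N_{\rho^R}$ embeds in $N$ under the covering projection, so one can choose a product slice $F\times[s,s+1]$ far enough out in that end to serve as a relative compact carrier disjoint from the carriers already constructed, and then iterate over the degenerate pieces. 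Without this (or an equivalent idea) your proof does not produce carriers for the degenerate components at all.

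The second gap is in your verification of precise embeddedness. The condition quantifies over \emph{every} element $\gamma$ of the geometric limit $\Gamma$, which is typically much larger than the union of the algebraic limits $\rho^R(\pi_1(R))$; an argument that tracks basepoints and observes that the pieces are separated by Margulis tubes of $\AAA$ in the approximates only controls the "visible" configuration and says nothing about how an arbitrary $\gamma\in\Gamma$ moves one limit set relative to another. The paper instead proves an algebraic statement (Lemma \ref{onepoint}): for any $\gamma\in\Gamma$, the intersection $\Lambda(\gamma\rho^R(\pi_1(R))\gamma^{-1})\cap\Lambda(\rho^S(\pi_1(S)))$ has at most one point. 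This uses the Anderson--Soma theorem decomposing the intersection of limit sets as $\Lambda(\Phi_1\cap\Phi_2)\cup P(\Phi_1,\Phi_2)$, shows the intersection of the conjugated subgroups is purely parabolic (as in \cite{ACCS} and \cite{ELC2}), and rules out the shared rank-one parabolics generating a rank-two group by a commutation argument in $\pi_1(M)$. Precise embeddedness for the web groups then follows because the one-point intersection forces $\gamma(\Lambda(\Gamma^j))$ into the closure of a single quasi-disc component of $\Omega(\Gamma^i)$. Your identification of hypothesis (3) as the source of "no accidental parabolics," via Lemma \ref{pieces are pared} and \cite[Lemma 3.2]{ACM}, is on target, but note the paper also needs Johannson's Classification Theorem to transfer essential annuli between $(R,Q)$ and the relative compact core of the limit manifold before hypothesis (3) can be applied.
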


We begin by showing that if $(R,Q), (S,T)\in\RR$, 
then any conjugate of the limit set of $\rho^R(\pi_1(R))$
intersects $\rho^S(\pi_1(S))$ in at most one point.  We next show that each
$\rho^R(\pi_1(R))$ is either a generalized web group or a degenerate group
without accidental parabolics. We can then use Proposition \ref{relcc} and the Covering Theorem 
\cite{canary-cover,thurston-notes}
to complete the proof of Proposition \ref{partialcoreD},
as in the proof of \cite[Prop 6.4]{ELC2}.

Our first lemma is a common generalization of Proposition 2.7 from \cite{ACCS} and Proposition 6.7
from \cite{ELC2}.

\begin{lemma}{onepoint}
Let $(M,P)$ be a pared 3-manifold with pared incompressible boundary
and let $\mathcal A$ be a robust collection of essential annuli in $(M,P)$.
Suppose that $\{\rho_n\}$ is a sequence in $D(M,P)$ which is convergent up to inner automorphism on a collection
$\RR$ of components of $(M_\mathcal A,P_\mathcal A)$
so that
\begin{enumerate}
\item
$\ell_{\rho_n}(\mathcal A)\to 0$, and
\item
$\{\rho_n(\pi_1(M))\}$ converges geometrically to $\Gamma$.
\end{enumerate}

If  (i) $R$ and $S$ are distinct elements of $\RR$  and $\gamma \in \Gamma$ or
(ii) $R=S$ and \hbox{$\gamma\notin \rho^R(\pi_1(R))$}, then
the intersection of limit sets
$$\Lambda(\gamma \rho^R(\pi_1(R)) \gamma^{-1}) \intersect
\Lambda(\rho^S(\pi_1(S)))$$ 
contains at most one point.
\end{lemma}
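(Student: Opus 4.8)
The plan is to argue by contradiction, combining the strategies of \cite[Prop.~2.7]{ACCS} and \cite[Prop.~6.7]{ELC2}. Suppose the intersection $\Lambda(\gamma\rho^R(\pi_1(R))\gamma^{-1})\cap\Lambda(\rho^S(\pi_1(S)))$ contained two distinct points $x,y$. Since inner automorphisms fix basepoints, $\rho_n^{\rho_n(g_n)}(\pi_1(M))=\rho_n(\pi_1(M))$ converges geometrically to $\Gamma$, so $\Gamma$ actually contains each of the subgroups $\rho^R(\pi_1(R))$ and $\rho^S(\pi_1(S))$, realized as Chabauty limits of the explicit subgroups $\rho_n(g_n\pi_1(R)g_n^{-1})$ and $\rho_n(h_n\pi_1(S)h_n^{-1})$ of $\rho_n(\pi_1(M))$, where $\{g_n\},\{h_n\}\subset\pi_1(M)$ are the conjugating sequences witnessing convergence up to inner automorphism. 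Absorbing $\gamma$ into one of the two subgroups (permissible since $\rho^R$ is only defined up to conjugacy in $\Gamma$; in case (ii) one instead carries along a second conjugate of $\rho^R(\pi_1(R))$, using that elements of $\Gamma$ are limits of $\rho_n(\delta_n)$ with $\delta_n\in\pi_1(M)$), and tracking basepoints via Lemmas \ref{converge and basepoint}--\ref{basepoints and limits} and the almost-isometric maps of Lemma \ref{basic geometric limit facts}, I would obtain two subgroups $\Gamma^R,\Gamma^S$ of $\Gamma$, nonelementary because the corresponding $\pi_1$ are non-abelian by the construction of $M_{\mathcal A}$ (Lemma \ref{pieces are pared}) and $\rho^R,\rho^S$ are discrete and faithful, whose limit sets share $x$ and $y$.

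The algebraic input is the robustness of $\mathcal A$. Since $\mathcal A$ is a collection of disjoint, pairwise non-parallel essential annuli and $R,S$ are components of the complement $M_{\mathcal A}$, the graph-of-groups (JSJ-type) picture for cutting $M$ along $\mathcal A$ shows that, for any $u\in\pi_1(M)$, the subgroup $u\bigl(g_n\pi_1(R)g_n^{-1}\bigr)u^{-1}\cap\bigl(h_n\pi_1(S)h_n^{-1}\bigr)$ is either trivial or infinite cyclic, generated by a conjugate of the core of an annulus of $\mathcal A$ (and similarly with $S$ replaced by a second copy of $R$ in case (ii), using that the conjugating element does not normalize). Applying $\rho_n$, such a cyclic group has translation length at most $\ell_{\rho_n}(\mathcal A)\to 0$ and so limits to a parabolic in $\Gamma$. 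Pushing this through the limiting process, every point common to $\Lambda(\Gamma^R)$ and $\Lambda(\Gamma^S)$ must be a parabolic fixed point of $\Gamma$ lying in one of the rank-one cusps of $N=\mathbb H^3/\Gamma$ onto which annuli of $\mathcal A$ pinch.

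To conclude, I would pass to $N$ together with its covers $\hat N_R=\mathbb H^3/\Gamma^R$ and $\hat N_S=\mathbb H^3/\Gamma^S$, each carrying a relative compact core that is the geometric limit of (a conjugate of) the core $h_{\rho_n}(R)$, respectively $h_{\rho_n}(S)$, in $N_{\rho_n}$. Robustness of $\mathcal A$ and $\ell_{\rho_n}(\mathcal A)\to 0$ let one arrange these cores so that their images in $N$ are disjoint outside standard horoball neighborhoods of the cusps coming from $\mathcal A$. The bi-infinite geodesic from $x$ to $y$ has both ideal endpoints in $\Lambda(\Gamma^R)$ and in $\Lambda(\Gamma^S)$, so along each of its ends it re-enters neighborhoods of both cores infinitely often; since the cores overlap only inside the cusp horoballs, and a geodesic with two distinct ideal endpoints cannot be confined to the horoballs based at a single parabolic fixed point, this forces $x=y$, a contradiction. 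Alternatively one can run the argument of \cite[Prop.~6.7 and Prop.~6.4]{ELC2}, invoking the Covering Theorem \cite{canary-cover,thurston-notes} for $\hat N_R\to N$ and $\hat N_S\to N$ together with this disjointness of cores.

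The main obstacle is this last step: controlling the (possibly infinitely generated) geometric limit $N$, making precise that the limiting cores of $R$ and $S$ really are disjoint away from the $\mathcal A$-cusps — which requires careful bookkeeping of the two distinct conjugating sequences $\{g_n\}$, $\{h_n\}$ and of basepoints — and ruling out that several non-parallel annuli of $\mathcal A$ contribute distinct shared parabolic points. I expect to handle this one cusp at a time, using that in each rank-one cusp of $N$ the preimage in $\hat N_R$ (resp.\ $\hat N_S$) is a disjoint union of horoballs and that the lifted cores stay within bounded distance of a fixed compact set, so that no single geodesic can witness two of these parabolic points simultaneously.
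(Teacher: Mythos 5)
Your proposal is not the paper's argument, and it has a genuine gap at its core. The paper's proof rests on the limit-set intersection theorem of Anderson and Soma (which itself requires tameness, hence Agol and Calegari--Gabai): for finitely generated $\Phi_1,\Phi_2$ in a torsion-free Kleinian group, $\Lambda(\Phi_1)\cap\Lambda(\Phi_2)=\Lambda(\Phi_1\cap\Phi_2)\cup P(\Phi_1,\Phi_2)$, where $P(\Phi_1,\Phi_2)$ is the set of points whose stabilizers in $\Phi_1$ and $\Phi_2$ are rank-one parabolic and jointly generate a rank-two parabolic group. The lemma then reduces to showing (a) that $\gamma\rho^R(\pi_1(R))\gamma^{-1}\cap\rho^S(\pi_1(S))$ is purely parabolic, so its limit set is at most one point, and (b) that $P=\emptyset$. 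Your Bass--Serre analysis of the annuli in $\AAA$ is essentially a version of (a) in case (i), but the sentence ``pushing this through the limiting process, every point common to $\Lambda(\Gamma^R)$ and $\Lambda(\Gamma^S)$ must be a parabolic fixed point'' is exactly where the argument breaks: the intersection of two limit sets is not controlled by the intersection of the groups without the Anderson--Soma theorem, and you never address step (b) at all. Your closing argument is also circular in spirit: the disjointness of the embedded cores of $R$ and $S$ in the geometric limit away from the $\AAA$-cusps is the content of Proposition \ref{partialcoreD}, which is proved \emph{using} this lemma via Proposition \ref{relcc} and the Covering Theorem. And the geodesic $[x,y]$ lives in the convex hulls of $\Lambda(\Gamma^R)$ and $\Lambda(\Gamma^S)$, whose projections to $N$ are non-compact and may overlap massively when the groups are degenerate, so disjointness of compact cores outside horoballs gives no control on it.

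Case (ii) is a second, independent failure. When $R=S$ and $\gamma\notin\rho^R(\pi_1(R))$, the approximating elements $\delta_n$ with $\rho_n(\delta_n)\to\gamma$ may lie in $h_n\pi_1(R)h_n^{-1}$: $\gamma$ can belong to the geometric limit of $\{\rho_n(h_n\pi_1(R)h_n^{-1})\}$ without belonging to the algebraic limit $\rho^R(\pi_1(R))$ (the Anderson--Canary phenomenon that motivates this whole section). In that situation the two vertex stabilizers in your graph-of-groups computation coincide and the intersection is the whole group, so ``the conjugating element does not normalize'' is simply unavailable. The paper resolves this inside the proof that $P=\emptyset$: a shared rank-one parabolic fixed point forces primitive elements $h_nrh_n^{-1}$ and $g_nsg_n^{-1}$ of $\pi_1(M)$ to commute for large $n$, hence either to agree up to inversion (so no rank-two group can arise in the limit) or to generate a peripheral rank-two abelian subgroup $\Delta$ of $\pi_1(R)$; for such a subgroup the geometric limit of $\rho_n(\Delta)$ coincides with $\rho^R(\Delta)$, which places $\gamma$ in $\rho^R(\pi_1(R))$ and gives the contradiction. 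Your proposal has no substitute for this mechanism, and without it the lemma fails to rule out two distinct shared parabolic fixed points or a shared point in case (ii).
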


\begin{proof}
A result of Anderson \cite{anderson-intersection} and Soma \cite{soma-intersection} implies that
if $\Phi_1$ and $\Phi_2$ are
non-elementary, finitely generated subgroups of a torsion-free Kleinian group $\Gamma$, then
$$ \Lambda(\Phi_1)\cap \Lambda(\Phi_2) =
\Lambda(\Phi_1\cap\Phi_2)\union P(\Phi_1,\Phi_2)$$
where $P(\Phi_1,\Phi_2)$ is the set of
points $x\in\Lambda(\Gamma)$ such that the stabilizers of $x$
in $\Phi_1$ and $\Phi_2$ are rank one parabolic subgroups which
generate a rank two parabolic subgroup of $\Gamma$.
(Anderson and Soma's results require that the subgroups are topologically tame, but it is now
known, by work of Agol \cite{agol} and Calegari-Gabai \cite{calegari-gabai}, 
that all finitely generated, torsion-free Kleinian groups are topologically tame.)

Without loss of generality we can assume that $\{\rho_n|_{\pi_1(R)}\}$ converges to $\rho^R$ and
there exists $g_n\in \pi_1(M)$ so that $\{\rho_n^{\rho_n(g_n)}|_{\pi_1(S)}\}$ converges to $\rho^S$.

One may show that $\gamma\rho^R(\pi_1(R))\gamma^{-1}\cap\rho^S(\pi_1(S))$ is purely parabolic, i.e.
consists entirely of parabolic and trivial elements,
exactly as in the proofs of  \cite[Lemma 2.4]{ACCS} and \cite[Lemma 6.7]{ELC2}.
Therefore,
$\Lambda(\gamma\rho^R(\pi_1(R))\gamma^{-1}\cap\rho^S(\pi_1(S)))$ contains at most one point.
Thus, it only remains to prove
that 
$$P(\gamma\rho^R(\pi_1(R))\gamma^{-1},\rho^S(\pi_1(S))) = \emptyset.$$

\medskip

Suppose that $x\in P(\gamma\rho^R(\pi_1(R))\gamma^{-1},\rho^S(\pi_1(S)))$.
The stabilizer $\stab_{\rho^S(\pi_1(S))}(x)$ is generated by
$\rho^S(s)$, and
$\stab_{\gamma\rho^R(\pi_1(R))\gamma^{-1}}(x)$ is generated by
$\gamma\rho^R(r)\gamma^{-1}$ where $r$ and $s$ are primitive elements
of $\pi_1(R)$ and $\pi_1(S)$, respectively.
Since these two elements must commute, 
$h_n r h_n^{-1}$ commutes with
$g_nsg_n^{-1}$ for sufficiently large $n$, see \cite[Prop. 2.7]{ELC2}.

If $h_n r h_n^{-1}$ and
$g_nsg_n^{-1}$ lie in a cyclic subgroup of $\pi_1(M)$, then, since they are primitive,
they agree, up to taking inverses, so their limits cannot generate a rank two abelian group.
Otherwise, $h_n r h_n^{-1}$ and $g_nsg_n^{-1}$ are primitive elements generating a 
rank two abelian subgroup $\Delta_n$ of $\pi_1(M)$ for all sufficiently large $n$.
This can only occur if $R=S$,
and the rank two abelian subgroup $\Delta_n$ is the maximal rank two abelian subgroup $\Delta$ of $\pi_1(R)$ containing $s$.
So, $\gamma$ lies in the geometric limit of $\{\rho_n(\Delta)\}$, which is simply $\rho^R(\Delta)$
(since any element of the geometric limit of $\{\rho_n(\Delta)\}$ is a parabolic element which shares
a fixed point with $\rho^R(\Delta)$, and hence, since $\rho^R(\Delta)$ has rank two and the geometric limit
is discrete, has a power which lies in $\rho^R(\Delta)$, which is again ruled out by an application of \cite[Prop. 2.7]{ELC2}).
We have achieved a contradiction, so $P(\gamma\rho^R(\pi_1(R))\gamma^{-1},\rho^S(\pi_1(S)))$ is empty, 
which completes the proof.
\end{proof}

The following lemma is the main new ingredient in our proof.

\begin{lemma}{webordegenerate}
If we are in the setting of Proposition \ref{partialcoreD} and $(R,Q)$ is a component of $\mathcal R$, then
$\rho^R(\pi_1(R))$ is either a generalized web group
or a degenerate group without accidental parabolics. 
\end{lemma}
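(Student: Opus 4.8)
The plan is to deduce the lemma from \cite[Lemma 3.2]{ACM}, which asserts that a finitely generated, torsion-free, non-elementary Kleinian group all of whose geometrically finite conformal boundary components are incompressible, and which has no accidental parabolics, is either a generalized web group or degenerate. Write $\Gamma_R=\rho^R(\pi_1(R))$. First I would record the basic structure of $\Gamma_R$. By Lemma \ref{pieces are pared}, $(R,Q)$ is a pared manifold with pared incompressible boundary, so $\pi_1(R)$ is non-elementary and freely indecomposable; since $\rho^R$ is discrete and faithful, $\Gamma_R$ is a finitely generated, torsion-free, non-elementary Kleinian group. Moreover $\rho^R\in D(R,Q)$: hypothesis (1) of Proposition \ref{partialcoreD}, together with the fact that every component of $P_{\mathcal A}=(M_{\mathcal A}\cap P)\cup{\rm Fr}(M_{\mathcal A})$ is either a component of $P$ or isotopic to a component of $\mathcal A$, forces $\rho^R$ to carry $\pi_1$ of each component of $Q$ to a parabolic subgroup. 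By tameness of the algebraic limit $N_{\rho^R}$ and standard 3-manifold topology (see \cite{canary-mccullough}), a relative compact core $(Y_R,Z_R)$ of $N_{\rho^R}^{\widehat{\mathcal H}}$, where $\widehat{\mathcal H}$ is the induced system of horoballs, is homeomorphic as a pared manifold to $(R,Q')$ for some pared structure $Q'\supseteq Q$ on $R$. In particular every geometrically finite component of $\partial_0(Y_R,Z_R)$ is incompressible (because $\partial R$ is incompressible), which gives the first hypothesis of \cite[Lemma 3.2]{ACM}.

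The main point is to show that $\Gamma_R$ has no accidental parabolics. Suppose it did. By the characterization recalled in Section \ref{cores background}, $(Y_R,Z_R)$ then contains an essential annulus joining a geometrically finite component of $\partial_0(Y_R,Z_R)$ to a component of $Z_R$. Transporting this annulus through the homeomorphism $(Y_R,Z_R)\cong(R,Q')$, and using $Q\subseteq Q'$ and the incompressibility of $\partial R$, I would verify that it yields an essential annulus $B$ in the pared manifold $(R,Q)$ --- one that is not admissibly homotopic into $\partial R$ and not parallel to a component of $Q$ --- whose core represents an element $c\in\pi_1(R)$ with $\rho^R(c)$ parabolic. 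Since $\rho^R$ is the limit of $\{\rho_n^{\rho_n(g_n)}|_{\pi_1(R)}\}$ and $c\in\pi_1(R)$, we have $\rho_n^{\rho_n(g_n)}(c)\to\rho^R(c)$; as $\rho^R(c)$ is parabolic, the translation lengths satisfy $\ell_{\rho_n}(B)=\ell_{\rho_n}(c)\to 0$. But $B$ is an essential annulus in a component of $(M_{\mathcal A},P_{\mathcal A})$, so this contradicts hypothesis (3) of Proposition \ref{partialcoreD}. Hence $\Gamma_R$ has no accidental parabolics, and \cite[Lemma 3.2]{ACM} finishes the proof.

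The step I expect to be the main obstacle is the topological bookkeeping in the first paragraph, and especially the verification in the second that the accidental-parabolic annulus transports to a genuinely \emph{essential} annulus of $(R,Q)$ rather than a boundary-parallel one (or one parallel to a component of $Q$). This requires tameness of the algebraic limit, the slightly awkward relationship between the induced horoball system $\widehat{\mathcal H}$ and the thin part of $N_{\rho^R}$ noted in Section \ref{cores background}, the pared incompressibility of $(R,Q)$, and arguments paralleling those of \cite[Section 2]{ACCS}, \cite[Section 6]{ELC2}, and the proof of Lemma \ref{onepoint}.
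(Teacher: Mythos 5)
Your overall strategy --- reduce to \cite[Lemma 3.2]{ACM} and rule out accidental parabolics by producing an essential annulus in $(R,Q)$ whose core becomes parabolic in the limit, contradicting hypothesis (3) of Proposition \ref{partialcoreD} --- is exactly the paper's, and your final step (passing from the parabolicity of $\rho^R(c)$ to $\ell_{\rho_n}(B)\to 0$) together with the use of Lemma \ref{pieces are pared} also matches. But there is a genuine gap at the step you yourself flag as the main obstacle: you assert that a relative compact core $(Y_R,Z_R)$ of $N_{\rho^R}^{\widehat{\mathcal H}}$ is \emph{homeomorphic} as a pared manifold to $(R,Q')$ for some $Q'\supseteq Q$, citing tameness and ``standard 3-manifold topology.'' This is false in general: a discrete faithful representation of $\pi_1(R)$ carrying $Q$ to parabolics only guarantees that the core is pared \emph{homotopy equivalent} to $(R,Q)$, and homotopy equivalent pared 3-manifolds with incompressible boundary need not be homeomorphic --- the failure of algebraic limits to preserve homeomorphism type (Anderson--Canary \cite{AC-pages}) is one of the central difficulties this entire section is built to circumvent. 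Since homotopy equivalences of Haken manifolds do not transport essential annuli in any naive way, your transfer of the accidental-parabolic annulus from $(Y_R,Z_R)$ back to an essential annulus $B$ in $(R,Q)$ is unjustified as written.

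The paper repairs precisely this point with Johannson's theory: the accidental-parabolic annulus $E$ is admissibly isotoped into a component $\Sigma_0$ of the characteristic submanifold of the core; Johannson's Classification Theorem \cite[Thm.~24.2]{johannson} lets one homotope the pared homotopy equivalence $j:(R,Q)\to (M_R,P_R^Q)$ so that it carries $\Sigma(R,Q)$ onto $\Sigma(M_R,P_R^Q)$ and is a homeomorphism off the characteristic submanifolds; then \cite[Lemma 2.11.3]{canary-mccullough} upgrades $j$ to a homeomorphism on the relevant interval bundle component, while for a solid or thickened torus component one uses the frontier annuli of $j^{-1}(\Sigma_0)$ rather than a preimage of $E$ itself. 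In each case one obtains an essential annulus of $(R,Q)$ with $\ell_{\rho^R}=0$, as you intended. If you replace your homeomorphism claim with this argument (and likewise derive the incompressibility of $\partial_0(M_R,P_R)$ from the pared homotopy equivalence via \cite[Prop.~1.2]{bonahon} or \cite[Lemma 5.2.1]{canary-mccullough} rather than from a homeomorphism), your proof becomes the paper's.
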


\begin{proof}
Let $\mathcal H$ be the invariant system of horoballs for $\Gamma$ obtained by considering
the pre-images of the non-compact components of $N_{(0,\ep)}$ where $N=\mathbb H^3/\Gamma$.
If \hbox{$(R,Q)\in\RR$}, let $\mathcal H_R$ be the induced subcollection of $\mathcal H$ associated
to the subgroup $\rho^R(\pi_1(R))$ and let $N_R^\mathcal H=N_{\rho^R}^{\mathcal H_R}$.

We first show that a relative compact core $(M_R,P_R)$ of $N_R^\mathcal H$ has pared incompressible boundary.
Recall that  Lemma \ref{pieces are pared} implies that $(R,Q)$ is a pared manifold with pared incompressible boundary.
Let $P_R^Q$ denote the collection of components of $P_R$ whose fundamental groups are in the conjugacy class
of $\rho^R(\pi_1(Q_0))$ for some component $Q_0$  of $Q$.
Since there is a pared homotopy
equivalence  
$$j:(R,Q)\to (M_R,P_R^Q),$$
every component of $\partial_0(M_R,P^Q_R)$ is incompressible
(see Bonahon \cite[Prop. 1.2]{bonahon} or \cite[Lemma 5.2.1]{canary-mccullough}).
Since $\partial_0(M_R,P_R)$ is obtained from $\partial_0(M_R,P_R^Q)$ by removing incompressible annuli,
$\partial_0(M_R,P_R)$ is also incompressible, so $(M_R,P_R)$ also has pared incompressible boundary.

We next show that  $\rho^R(\pi_1(R))$  has no accidental parabolics.  
If $\rho^R(\pi_1(R)))$ has an accidental parabolic, there exists an essential annulus $E$ in $(M_R,P_R)$ 
so that one component of $\partial E$ is contained in $P_R$ and the other is contained in $\partial_0(M_R,P_R)$.
Notice that $E$ is also an essential annulus in $(M_R,P_R^Q)$.
Thus, there is a component $\Sigma_0$ of the characteristic submanifold $\Sigma(X_R,P^Q_R)$, so
that $E$ is isotopic into $\Sigma_0$. Johannson's Classification Theorem \cite[Thm. 24.2]{johannson} implies
that we may assume that the pared homotopy equivalence $j$ between $(R,Q)$ and $(M_R,P_R^Q)$ has the
property that
$j(\Sigma(R,Q))=\Sigma(M_R,P_R^Q)$ and 
$j$ is a homeomorphism from the (closure of the) complement of $\Sigma(R,Q)$ to the
(closure of the) complement of $\Sigma(M_R,P_R^Q)$.
If $\Sigma_0$ is a solid or thickened torus component of $\Sigma(M_R,P_R^Q)$, then $\Sigma_1=j^{-1}(\Sigma_0)$ is a solid
or thickened torus component of $\Sigma(R,Q)$. It follows that any annulus $B$ in the frontier of $\Sigma_1$ would be
an essential annulus in $(R,Q)$ with the property that $\ell_{\rho^R}(B)=0$,
which is disallowed by our assumptions.
If $\Sigma_0$ is an interval bundle component of $\Sigma(M_R,P_R^Q)$, 
then $\Sigma_1=j^{-1}(\Sigma_0)$ is an interval bundle component of $\Sigma(R,Q)$.
However, by \cite[Lemma 2.11.3]{canary-mccullough}, the restriction of $j$ to $(\Sigma_1,{\rm Fr}(\Sigma_1))$
is pared homotopic to a pared homeomorphism $h:(\Sigma_1,{\rm Fr}(\Sigma_1))\to (\Sigma_0,{\rm Fr}(\Sigma_0))$.
Then, $B=h^{-1}(E)$ would  again be an essential annulus in $(R,Q)$
with the property that $\ell_{\rho^R}(B)=0$,
which is again disallowed by our assumptions. Therefore, $\rho^R(\pi_1(R))$ has no accidental parabolics.

Since the relative compact core of $N_R^\mathcal H$ has pared incompressible boundary and
$\rho^R(\pi_1(R))$ has no accidental parabolics, Lemma 3.2 in \cite{ACM} implies that
$\rho^R(\pi_1(R))$ is either a generalized web group or degenerate group without accidental parabolics.
\end{proof}

\medskip\noindent
{\em Proof of Proposition \ref{partialcoreD}:}
Let $\{\Gamma^1,\ldots,\Gamma^s\}=\{\rho^R(\pi_1(R))\}_{(R,Q)\in\RR}$.
We re-order so that $\{\Gamma^1,\ldots,\Gamma^r\}$ are generalized web groups and
 $\{\Gamma^{r+1},\ldots,\Gamma^s\}$ are degenerate groups without accidental parabolics.
Let $(R_i,Q_i)$ be the component of $\RR$, so that $\Gamma_i=\rho^{R_i}(\pi_1(R_i))$.

For $i,j\in\{1,\ldots,r\}$, 
Lemma \ref{onepoint} implies that if $\gamma\in\Gamma$  and $i\ne j$
or if $i=j$ 
and $\gamma \in \Gamma-\Gamma^i$, then the limit sets
of $\gamma\Gamma^i\gamma^{-1}$ and $\Gamma^j$ intersect in
at most one point. Since each component of $\Omega(\Gamma^i)$ is bounded by a quasi-circle, we 
immediately conclude that $\gamma(\Lambda(\Gamma^j))$ is contained in the closure of a component
of $\Omega(\Gamma^i)$. Therefore,
$\{\Gamma^1,\ldots,\Gamma^r\}$ is a precisely embedded collection of
generalized web subgroups of $\Gamma$. 
Proposition \ref{relcc} then implies that there is an associated collection
$\{(Y_1,Z_1),\ldots, (Y^r,Z_r)\}$ of disjoint relative compact carriers for $\{\Gamma^1,\ldots,\Gamma^r\}$
in $N^{\mathcal H}$.  

Since $\Gamma^{r+1}$ is a degenerate group without accidental parabolics,
the Covering Theorem \cite{canary-cover,thurston-notes} may be used, exactly as in the proof of 
\cite[Prop. 6.10]{ELC2}, to show that there exists a neighborhood $U$ of
an end of $N_{R_{r+1}}^\mathcal H\cong F_{r+1}\times\mathbb R$, for some compact surface $F_{r+1}$,
which is identified with
$F_{r+1}\times (k_{r+1},\infty)$ and $\pi_{R_{r+1}}(U)$ embeds in $N$, under the obvious
covering map $p_{r+1}:N^\mathcal H_{R_{r+1}}\to N$. One may then choose 
a relative compact carrier $(Y_{r+1},Z_{r+1})$  for $\Gamma_{r+1}$ 
of the form
$$(Y_{r+1},Z_{r+1})=(p_{r+1}(F_{r+1}\times [s,s+1]),p_{r+1}(\partial F_r\times [s,s+1]))$$ 
for some $s>k_{r+1}$ so that $Y_{r+1}$ is disjoint from
$Y_1\cup\cdots \cup Y_r$. One similarly  uses the Covering
Theorem to iteratively choose a relative compact carrier $(Y_{r+j},Z_{r+j})$ for $\Gamma_{r+j}$ which is  
disjoint from $Y_1\cup\cdots\cup Y_{r+j-1}$.
One finally arrives at a disjoint collection $\{ (Y_R,Z_R)\}_{(R,Q)\in\RR}$ of
relative compact carriers for $\{\Gamma^i\}_{i=1}^s$. 
\qed

\subsection{Proof of the uniform cores theorem}
\label{uniform core proof}
We now complete the proof of Theorem \ref{core for sequence} (and hence
Theorem \ref{uniform cores}). The proof breaks into several steps
which we briefly summarize.

The first step is to use the convergence theorems we have been
developing to identify a robust system of annuli $\AAA$ so that (a
subsequence of) our
given sequence $\{\rho_n\}$ converges on the pieces of $M_\AAA$. We then
find an embedded submanifold $Y$ in the union of geometric
limits of $\{N_{\rho_n}\}$, and a homotopy equivalence $h:M_\AAA \to Y$.
In Step 1 we do this for each geometric limit separately. In Step 2 we
combine the geometric limits and produce an
embedding $\psi_n:Y \to N_{\rho_n}$ for each large enough
$n$. Attaching the pieces of $Y_n  = \psi_n(Y)$ along Margulis tubes,
we obtain the submanifold $C_n\subset N_{\rho_n}$ which will be our
compact core. In Step 3, we combine $h$ and $\psi_n$ to get a map $s_n
: M\to C_n$ in the correct homotopy class.

Note that, at this point,
$s_n$ is not known to be a homeomorphism even on the components of
$M_\AAA$. Indeed, even in the case of a convergent sequence of
representations the homeomorphism type of the limit may differ from
that of the approximates, as discovered by Anderson-Canary \cite{AC-pages}, and this
phenomenon accounts for many of the difficulties in this proof.

In Step 4 we show that $s_n$ is a homotopy equivalence and conclude
that $C_n$ is in fact a (relative) compact core of $N_{\rho_n}$. In
Step 5 we show that $s_n$ is indeed homotopic to a homeomorphism $\hhat
s_n$. Note that here the assumption that $[\rho_n]\in AH_0(M,P)$ is
crucial: it means that an embedding from $M$ to $N_{\rho_n}$ in the
right homotopy class actually exists.  It requires some extra work to
ensure that $\hhat s_n$ respects the decomposition induced by the
annuli $\AAA$. Finally in Step 6 we use the geometry of the geometric
limits to choose a metric $m$ on $M_\AAA$ which makes our final maps $2$-bilipschitz.

\subsubsection*{Step 1: Embedding the partial cores}
Consider a sequence $\{\rho_n\}$ in $D(M,P)$ of representatives of the
sequence $\{[\rho_n]\}$. 
We may apply Corollary \ref{ourrelcompactness} to remark, subdivide by annuli and
obtain a subsequence that converges on the pieces. That is, 
let  $\{\phi_n:(M,P)\to (M,P)\}$ be the sequence of
homeomorphisms which are the
identity on the complement of $\Sigma(M,P)$, and $\mathcal A$ the robust collection of annuli
for which the conclusions of Corollary  \ref{ourrelcompactness} hold.
To lighten the notation we can assume, without loss of generality,
that each $\phi_n$ is the identity  and that $\{\rho_n\}$ is the subsequence. Hence we have
\begin{enumerate}
\item
$\lim\ell_{\rho_n}(\AAA)=0$, 
\item 
$\{\rho_n\}$ converges on $M_\mathcal A$ up to conjugacy, and
\item 
if $B$ is an essential annulus in  $(M_\AAA,P_\AAA)$, then 
$$\lim\ell_{\rho_n}(B)>0.$$
\end{enumerate}

For each component $(R,Q)$ of $M_\mathcal A$ we have a sequence $\{\rho_n^{g_n}\}$ of conjugates
which converge on $\pi_1(R)$ to $\rho^R$, and a corresponding
sequence  $\{b^R_n \in N_{\rho_n}\}$ of basepoints,  which are the projections of a fixed basepoint $\zero\in\mathbb H^3$
as in the setup of Lemma \ref{basepoints and limits}. 
We may pass to a subsequence so that if $(R,Q)$ and $(S,T)$ are components of
$M_\mathcal A$, then $d(R,S)=\lim d(b_n^R,b_n^S)$ 
exists and lives in $[0,\infty]$. We say two components $(R,Q)$ and $(S,T)$ are
{\em nearby} if $d(R,S)$ is finite. 
This defines an equivalence relation on the set of components of
$M_\mathcal A$. For an equivalence class $\RR$ of components, Lemma
\ref{basepoints and limits} implies that there is a single sequence of
conjugates $\{\rho_n^{g_n}\}$ which converges up to inner automorphisms on
$\RR$, and (again passing to a subsequence) shares a geometric limit
$\Gamma^\RR$ with quotient $N_\RR$.

We can then apply Proposition \ref{partialcoreD} to
obtain, for each equivalence class $\RR$,  a collection $\{(Y_R,Z_R)\}_{(R,Q)\in\RR}$ of disjoint relative
compact carriers for $\{\rho^R(\pi_1(R))\}_{(R,Q)\in\RR}$ in
$N^{2\ep}_\RR$.
Let $Y_\RR=\bigcup_{(R,Q)\in\RR} Y_R$ and $Z_\RR=\bigcup_{(R,Q)\in \RR} Z_R$.

For each $(R,Q)\in\RR$, let $Z_R^Q$ be the collection of components of $Z_R$ which lie in
the homotopy class of $\rho^R(\pi_1(Q))$. There exists a map of pairs
$$h_R:(R,Q)\to (Y_R,Z_R^Q)$$ 
which is a homotopy equivalence from $R$ to $Y_R$ and restricts to an orientation-preserving embedding on $Q$.

The following elementary lemma from \cite{canary-mccullough} implies that
$h_R$ is a homotopy equivalence of pairs.

\begin{lemma}{paredupgrade}{\rm (Canary-McCullough \cite[Prop. 5.2.3]{canary-mccullough})}
If $(M_1,P_1)$ and $(M_2,P_2)$ are  pared manifolds and
$h:(M_1,P_1)\to (M_2,P_2)$ is a map of pairs, so that $h$ is a homotopy equivalence from
$M_1$ to $M_2$, then $h$ is a pared homotopy equivalence.
\end{lemma}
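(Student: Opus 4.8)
The plan is to reduce the statement to the single claim that the restriction $h|_{P_1}\colon P_1\to P_2$ is a homotopy equivalence, and then to invoke a standard gluing lemma for cofibered pairs: since each inclusion $P_i\hookrightarrow M_i$ is a cofibration (it is a collared submanifold), a map of pairs that restricts to homotopy equivalences on both the total spaces and the subspaces is automatically a homotopy equivalence of pairs, i.e. it admits a homotopy inverse which is a map of pairs, with the two homotopies to the identities realized through maps of pairs — which is exactly what a pared homotopy equivalence requires. Because $P_1$ and $P_2$ are disjoint unions of annuli and tori, which are connected aspherical surfaces, $h|_{P_1}$ is a homotopy equivalence precisely when $h$ carries the components of $P_1$ bijectively onto those of $P_2$ and restricts on each component to a $\pi_1$-isomorphism. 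So the whole statement comes down to establishing this component-wise picture.

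By connectedness, each component $Q$ of $P_1$ maps into a single component $\iota(Q)$ of $P_2$. The algebraic inputs are that in a pared manifold the peripheral subgroups $\pi_1(Q)$ are maximal abelian in $\pi_1(M)$ (Canary--McCullough, Lemma 5.1.1) and that distinct components of $P$ have non-conjugate fundamental groups; the latter follows from pared axiom~(2), since a primitive element common (up to conjugacy) to the fundamental groups of two distinct components of $P$ produces an essential annulus in $(M,P)$ joining them, which by axiom~(2) would be admissibly homotopic into $P$ — impossible, as $P$ is a disjoint union of annuli and tori. Now $h_*$ is a $\pi_1$-isomorphism, so $h_*\pi_1(Q)$ is maximal abelian in $\pi_1(M_2)$ and, lying in a conjugate of the abelian group $\pi_1(\iota(Q))$, must equal it; thus $\iota$ is $h_*$-equivariant on conjugacy classes. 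It is injective: if $\iota(Q_1)=\iota(Q_2)$, then maximality forces $\pi_1(Q_1)$ and $\pi_1(Q_2)$ to be conjugate, hence $Q_1=Q_2$. Surjectivity onto the toroidal components follows because $h_*$ induces a bijection on conjugacy classes of maximal noncyclic abelian subgroups, and by pared axiom~(1) these are exactly the fundamental groups of the toroidal components of the $P_i$; combined with injectivity and equivariance this makes $\iota$ a bijection on toroidal components, and each $h|_Q$ is then a $\pi_1$-isomorphism of tori, hence a homotopy equivalence.

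The remaining — and main — obstacle is surjectivity of $\iota$ on the \emph{annular} components, equivalently the statement that $h_*$ sends the set of conjugacy classes of annular peripheral subgroups of $\pi_1(M_1)$ onto the corresponding set for $\pi_1(M_2)$. This cannot be extracted from the ambient fundamental groups alone, since annular parabolicity is genuine extra data, and it must use that $h$ is a map of pairs together with pared axiom~(2). The approach is to choose a homotopy inverse $g_0$ of $h$ and promote it to a map of pairs $g\colon(M_2,P_2)\to(M_1,P_1)$: on toroidal components $g_0$ is already peripheral by pared axiom~(1) for $(M_1,P_1)$, while on an annular component $Q'$ of $P_2$ one homotopes $g_0|_{Q'}$ — using that $h\circ g_0|_{Q'}$ is homotopic in $M_2$ to the inclusion $Q'\hookrightarrow P_2$ and that $h$ is a homotopy equivalence — until it becomes a $\pi_1$-injective \emph{admissible} annulus in $(M_1,P_1)$; pared axiom~(2) then forces it into a component $Q$ of $P_1$, and $h(Q)\subseteq P_2$ together with $h_*$-equivariance yields $\iota(Q)=Q'$. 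Arranging this admissibility for $g_0|_{Q'}$ is the delicate technical step.

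Once $\iota$ is known to be a bijection, each $h|_Q\colon Q\to\iota(Q)$ is a $\pi_1$-isomorphism of connected aspherical surfaces, hence a homotopy equivalence, so $h|_{P_1}\colon P_1\to P_2$ is a homotopy equivalence; the gluing lemma for cofibered pairs then upgrades $h$ to a homotopy equivalence of pairs, i.e. a pared homotopy equivalence, completing the proof. The only genuinely substantive point is the annular surjectivity in the previous paragraph; everything else is formal consequences of maximality of peripheral subgroups and of $h$ being a $\pi_1$-isomorphism.
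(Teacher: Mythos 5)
First, note that the paper gives no proof of this lemma; it is quoted verbatim from Canary--McCullough, so there is no in-paper argument to compare against and your proposal has to stand on its own. Your reduction (the cofibration gluing lemma plus a component-by-component analysis of $h|_{P_1}$), the use of maximality of peripheral subgroups, the non-conjugacy of distinct components of $P$, the injectivity of $\iota$, and the surjectivity onto toroidal components are all correct, and this is surely the shape of the real proof.

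The gap is exactly where you flag it, and it is not merely ``delicate'' --- as set up, it cannot be closed. First, the mechanism you propose misapplies pared axiom (2): that axiom applies only to annuli whose boundary circles already lie in $P_1$, whereas an ``admissible'' annulus is allowed to have boundary in $\partial_0(M_1,P_1)$, and for such annuli axiom (2) says nothing (at best they are homotopic into the characteristic submanifold, not into $P_1$). Getting $\partial(g_0|_{Q'})$ into $P_1$ is precisely the assertion to be proved, so the argument is circular. Second, and more fundamentally, the assertion is false under the literal hypotheses as stated: take $M_1=M_2=S\times[0,1]$ for $S$ a closed hyperbolic surface, $P_1=\emptyset$, and $P_2=A$ an annular neighborhood of $c\times\{0\}$ in $S\times\{0\}$ for an essential simple closed curve $c$. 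Both pairs are pared (the second is the relative compact core of a geometrically finite structure on $S\times\R$ with a single rank-one cusp), the identity is a map of pairs since $\emptyset\subset A$, and it is a homotopy equivalence of the underlying manifolds; yet it admits no homotopy inverse that is a map of pairs, since no map carries $A$ into $\emptyset$. So surjectivity of $\iota$ onto the annular components is genuinely additional data, not a consequence of ``map of pairs'' in the weak sense. The intended hypothesis --- satisfied in every application in this paper, where the map restricts to a homeomorphism or at least a component-surjection $P_1\to P_2$, and built into Canary--McCullough's formulation via Johannson-style admissible maps with $h^{-1}(P_2)=P_1$ --- is that every component of $P_2$ is hit by a component of $P_1$ up to conjugacy. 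Once you add that hypothesis, your ``delicate step'' evaporates and the rest of your argument goes through.
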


We may combine the maps on each component to obtain a map
$$h_\RR=\bigcup h_R:\bigcup_{(R,Q)\in \RR} (R,Q)\to (Y_\RR,Z_\RR).$$

\subsubsection*{Step 2: Building the core $C_n$}
A crucial step in the proof is the construction of a compact core
$C_n$ in the approximate $N_{\rho_n}$ for 
all large enough $n$.  Using elementary facts about geometric limits
we will pull back the carriers $Y_\RR$ to submanifolds of
$N_{\rho_n}$, and then attach them along the thin parts associated to
the annuli $\AAA$ to obtain $C_n$ together with maps from $M$ to $C_n$ which
we will then show are homotopy equivalences, and eventually promote to homeomorphisms.

We first make a small adjustment to $(Y_\RR,Z_\RR)$,
caused by a technical need to assure that $C_n$ may be pared so that
it is a relative compact core for $N_{\rho_n}\ssm\MT_{\ep,n}(P)$,
whereas the annuli of $Z_\RR$ have been chosen to lie in
$\boundary\MT_{2\ep}(Z_\RR)$. Thus, for each annulus $P_0$ of $P$
which lies in a component $(R,Q)$ of $M_\AAA$, let $Z_0 = h_R(P_0)$ be the
associated component of $Z_R$. We ``push'' $Z_0$ into
$\boundary \MT_\ep(Z_0)$ by 
adding to $Y_R$ a radial collar
of $Z_0$ in $\MT_{2\ep}(Z_0)\ssm \MT_\ep(Z_0)$. To
simplify notation we continue to call the resulting pared manifolds
$(Y_R,Z_R)$, and the homotopy equivalences $h_R$.

For all large enough $n$, Lemma \ref{basic geometric limit facts} provides a 2-bilipschitz map
$$\psi_n^\RR: Y_\RR\to N_{\rho_n}$$
such that for each $(R,Q)\in\RR$,
\begin{enumerate}
\item 
$\psi_n|_{ Y_R}$ is in the homotopy class determined by
$\rho_n\circ (h_R)_*^{-1}$,
\item 
If $Z_0=h_R(P_0)$ and $P_0$ is a component of $Q\intersect P$, then
$$\psi_n(Y_R)\subset N_{\rho_n}\ssm\MT_{\ep,n}(P_0)\qquad {\rm and}\qquad \psi_n(Z_0)\subset \partial \MT_{\ep,n}(P_0).$$
\item 
If $Z_0=h_R(Q_0)$  and $Q_0$ is a component of $Q\ssm P$, then
$$\psi_n(Y_R)\subset N_{\rho_n}\ssm\MT_{2\ep,n}(Q_0)\qquad {\rm and}\qquad 
\psi_n(Z_0)\subset \partial \MT_{2\ep,n}(Q_0).$$
\end{enumerate}
(Here, $\MT_{\ep,n}(P_0)$ denotes the component of $(N_{\rho_n})_{(0,\ep)}$ in the homotopy class of $P_0$.)

 If $\RR$ and $\RR'$ are distinct equivalence classes of components of
$M_\mathcal A$, the fact that $d(R,R')=\infty$ for all $(R,Q)\in\RR$ and $(R',Q')\in\RR'$
implies that $\psi^\RR_n(Y_\RR)$ and
$\psi^{\RR'}_n(Y_{\RR'})$ are disjoint for all large enough values of
$n$.  To see this, note first that Lemma \ref{basepoints and limits}
tells us that the basepoints $b_n^R$ for $(R,Q)\in\RR$ remain a bounded
distance from the basepoints $b_n^\RR$ for the common sequence of
conjugates that converges for all $R\in\RR$.
Now the
maps $\psi_n^\RR$ take the limiting basepoint $b^\RR$ to $b_n^\RR$ 
(see  Lemma \ref{basic geometric limit facts}) 
so we may conclude that $\psi_n^\RR(Y_\RR)$ remain at bounded distance
from $b_n^R$ for each $(R,Q)\in\RR$, and similarly for $\RR'$. Hence
the distance between $\psi_n^\RR(Y_\RR)$ and $\psi_n^{\RR'}(Y_{\RR'})$
goes to $\infty$, so they are eventually disjoint.
Now let 
$$Y=\bigcup Y_\RR, \qquad h=\bigcup h_\RR, \qquad
\psi_n=\bigcup \psi_n^\RR,\qquad \textrm{and}\qquad
Y_n=\psi_n( Y),$$
noting that $\psi_n:Y\to  Y_n$ is an embedding for
large $n$. 

We now construct a submanifold $C_n$ as the union of $Y_n $ with solid 
or thickened tori in $\MT_{2\ep,n}(\AAA)-\MT_{\ep,n}(\AAA)$, each of which is attached to one or
more annuli in $\psi_n(h({\rm Fr}(M_\mathcal A)))$.
Let $\MT_{2\ep,n}(A_i)$ be a Margulis tube for a component
(possibly several) $A_i$ of $\AAA$. 
If $\ell_{\rho_n}(A_i)>0$, we append  $\bbar\MT_{2\ep,n}(A_i)$  to $Y_n$.
If  not, we choose an annulus (if $\MT_{2\ep,n}(A_i)$ is a rank one cusp)  or torus 
(if $\MT_{2\ep,n}(A_i)$ is a rank two cusp) $B_i$ in $\partial\MT_{2\ep,n}(A_i)$
which contains $ Y_n\cap \boundary\MT_{\ep,n}(A_i)$
in its interior, and append to $Y_n$ the set of  points in
\hbox{$\MT_{2\ep,n}(A_i)\ssm\MT_{\ep,n}(A_i)$} which project radially to $B_i$.

To give $C_n$ a pared structure, note first that each annulus
$P_0\subset P$ in a component $(R,Q)$ of $M_\AAA$ is already taken by
$h_R$ to an annulus in $\boundary\MT_{\ep,n}(P_0)$.
If $P_0$ is a component of $P$ lying
in a component $U$ of $M\setminus M_\AAA$, then $U$ corresponds to a
cusp $\MT_{2\ep,n}(U)$ and the intersection of
$C_n$ with $\boundary\MT_{\ep,n}(U)$ is an annulus or torus.
Putting all these components together we obtain a 
pared locus $P_n =  C_n \intersect \boundary\MT_{\ep,n}(P)$.

\subsubsection*{Step 3: Mapping $M$ to $C_n$}
Next we define a map $s_n : (M,P) \to (C_n,P_n)$, such that
\begin{itemize}
\item $s_n$ is in the homotopy class of $\rho_n$, viewed as a map from
  $M$ to $N_{\rho_n}$,
\item $s_n|_{M_\mathcal A} = \psi_n\circ h$,
\item $s_n$ maps $P$ to $P_n$ homeomorphically.

\end{itemize}

Since $[\rho_n]\in AH_0(M,P)$, there exists
an orientation-preserving embedding
$$\kappa_n:(M,P)\to (N_{\rho_n}^\ep,\partial N_{\rho_n}^\ep)$$
in the homotopy class of $\rho_n$. After an isotopy along the boundary,
we may assume that $\kappa_n(P)$ is the pared locus $P_n$ of $C_n$.

Since $\psi_n\circ h$ is homotopic to $\kappa_n|_{M_\mathcal A}$, and
each element of $\mathcal A \union P$ is
collared in $M$, there exists an extension $s'_n$ of $\psi_n\circ h$ to all
of $M$, which is homotopic to $\kappa_n$ as a map of pairs and equals
$\kappa_n$ on $P$.
If $U$ is a component of
$M\setminus M_\mathcal A$ then it is a  solid or thickened torus, and 
${\rm Fr}(U)$ is mapped by $s_n'$
into the boundary of the associated Margulis tube $\MT_{2\ep,n}(U)$.
Since $\pi_1(U)$ is a maximal abelian subgroup of $\pi_1(M)$ and $\pi_1(U)$ is its own
centralizer in $\pi_1(M)$,
the restriction $s'_n|_U$ is homotopic, relative to ${\rm Fr}(U)\union(\boundary U\intersect P)$, into 
$C_n \intersect \MT_{2\ep,n}(U)$.
After performing such a homotopy for
each $U$ we obtain the desired map $s_n$.

\subsubsection*{Step 4: Showing $C_n$ is a core}
We now claim that $(C_n,P_n)$ is a relative compact core for $N_n-\MT_{\ep,n}(P)$. 
To prove this we first note the
following lemma proved in \cite{ACM}.

\begin{lemma}{acmtoplemma2}{}
{\rm (\cite[Lemma 5.2]{ACM})} 
For $i=1,2$, let $M_i$ be a compact, orientable, irreducible
3-manifold with incompressible boundary 
and let $V_i$ be a 3-dimensional submanifold whose frontier ${\rm Fr}(V_i)$
is non-empty and incompressible. 
If $g:M_1\to M_2$ is a continuous map such that
\begin{enumerate}
\item
$g^{-1}(V_2)=V_1$,
\item
$g$ restricts to a homeomorphism from $V_1$ to $V_2$,
and
\item
$g$ restricts to a homotopy equivalence from $M_1\ssm V_1$ to
 $M_2\ssm V_2$,
\end{enumerate}
then $g$ is a homotopy equivalence.
\end{lemma}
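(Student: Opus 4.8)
The plan is to prove this in two stages: first show that $g$ induces an isomorphism on fundamental groups by a Seifert--van Kampen argument adapted to the decomposition $M_i=V_i\cup\overline{M_i\ssm V_i}$, and then upgrade this to a homotopy equivalence using the fact that the $M_i$ are aspherical.

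First I would reduce to the connected case, since the argument below shows $g$ induces a bijection on components and then applies componentwise. Set $W_i=\overline{M_i\ssm V_i}$, so that $M_i=V_i\cup W_i$ with $V_i\cap W_i=\operatorname{Fr}(V_i)=:F_i$, a nonempty two-sided incompressible surface in $M_i$. From $g^{-1}(V_2)=V_1$ one gets $g(W_1)\subseteq W_2$, and since $g|_{V_1}$ is a homeomorphism onto $V_2$ it restricts to a homeomorphism $F_1\to F_2$; thus $g$ is a map of triads $(M_1;V_1,W_1)\to(M_2;V_2,W_2)$ which is a homeomorphism on the $V$-pieces and on $F$, and (after using $\operatorname{int}(W_i)\simeq W_i$) a homotopy equivalence on the $W$-pieces. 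Making these restriction statements precise from the bare hypotheses is the one point that needs genuine care; in the application to $g=s_n$ they are transparent from the construction.

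Next I would use Seifert--van Kampen to express $\pi_1(M_i)$ as a colimit (the fundamental group of a graph of groups) whose vertex groups are the $\pi_1$'s of the components of $V_i$ and of $W_i$ and whose edge groups are the $\pi_1$'s of the components of $F_i$; incompressibility of $F_i$ makes all the edge maps injective, so this is an honest graph-of-groups splitting, though only the colimit description is used below. The map $g$ induces an isomorphism of the underlying graphs together with an isomorphism on each vertex and each edge group (a homeomorphism on the $V$- and $F$-pieces, a homotopy equivalence on the $W$-pieces). Since the colimit of groups is functorial, a morphism of finite diagrams that is an isomorphism on every object induces an isomorphism on colimits; hence $g_*\colon\pi_1(M_1)\to\pi_1(M_2)$ is an isomorphism.

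Finally I would invoke asphericity. Each $M_i$ is compact, orientable and irreducible, and because it contains the nonempty incompressible surface $F_i$ it is neither $S^3$ nor a $3$-ball, so $\pi_1(M_i)$ is infinite; by the Sphere Theorem $\pi_2(M_i)=0$, whence the universal cover $\widetilde M_i$ is a simply connected, noncompact $3$-manifold with vanishing $\pi_2$, so it is acyclic and therefore contractible. Thus $M_1$ and $M_2$ are aspherical, and a map of aspherical CW complexes inducing an isomorphism on $\pi_1$ is a homotopy equivalence, which is the assertion. The main obstacle is the bookkeeping in the second paragraph (checking that $g$ respects the decomposition and behaves as claimed on $F_1$ and on $W_1$); the $\pi_1$-colimit step and the asphericity input are standard.
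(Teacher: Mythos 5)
The paper does not actually prove this lemma: it is imported verbatim from Anderson--Canary--McCullough \cite[Lemma 5.2]{ACM}, so the comparison is with the standard argument, which is indeed the one you outline — a van Kampen computation showing that $g_*$ is an isomorphism on $\pi_1$, followed by asphericity of the $M_i$ and Whitehead's theorem. Your asphericity step is essentially fine, except that the justification that $\pi_1(M_i)$ is infinite (``neither $S^3$ nor a ball'') is not a valid inference (lens spaces); the correct reason is that each component of the incompressible surface $F_i$, being neither a sphere nor a disk, has infinite fundamental group which injects into $\pi_1(M_i)$.

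Two steps need repair. First, the van Kampen step would fail as literally written: when $F_i$ is disconnected — as it always is in the application, where ${\rm Fr}(V_i)$ is a union of annuli and tori — the dual graph of the decomposition $M_i=V_i\cup_{F_i}W_i$ is bipartite but need not be a tree, and $\pi_1(M_i)$ is then \emph{not} the colimit of the diagram of vertex and edge groups: cycles in the graph contribute HNN stable letters (free generators coming from $\pi_1$ of the graph) which the group colimit does not see. You must either run van Kampen for fundamental groupoids on a set of basepoints meeting every component of every piece, where ``objectwise isomorphism implies isomorphism of pushouts'' is legitimate, or invoke the fact that an isomorphism of graphs of groups induces an isomorphism of fundamental groups and check that $g$, with compatible basepoints and connecting paths, realizes such an isomorphism. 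Second, the point you flag is a genuine gap and it is worth isolating exactly what is missing: from hypotheses (1)--(3) one gets $g(F_1)\subseteq F_2$, injectivity of $g|_{F_1}$, and (by invariance of domain) that $g(F_1)$ is clopen in $F_2$, but \emph{surjectivity} onto $F_2$ does not follow formally — nothing prevents $(g|_{V_1})^{-1}$ of a component of $F_2$ from lying in $V_1\cap\partial M_1$ rather than in ${\rm Fr}(V_1)$, since $g$ is not assumed to preserve boundaries. Without $g(F_1)=F_2$ the map of van Kampen diagrams is not an objectwise equivalence and the argument stalls. In the application to $s_n$ this surjectivity is immediate from the construction, so the gap is fillable where the lemma is used, but it is not a consequence of the hypotheses as stated.
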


We can apply this lemma with $M_1=M$ and $M_2=C_n$, 
where $V_1$ is a collar neighborhood $\NN(\AAA)$. Since $s_n$ by
construction is an embedding on $\AAA$ and takes $M\ssm\AAA$ to the
complement of $s_n(\AAA)$,  we can assume after a small homotopy that
$s_n$ is an embedding on $V_1$, and setting $V_2 = s_n(V_1)$, that
$s_n^{-1}(V_2) = V_1$ as well. The components of $M\ssm \AAA$ are
isotopic to components of $M_\AAA$ or to the solid or thickened tori
of $M\ssm M_\AAA$, and $s_n$ is a homotopy equivalence from each of
these to the corresponding component of $C_n \ssm s_n(\AAA)$, again by
construction. Moreover we note that $s_n$ is bijective on the
components of $M\ssm \AAA$ since no two components have conjugate
fundamental groups, and that it is surjective to the components of
$C_n\ssm s_n(\AAA)$ by definition of $C_n$. We conclude that $s_n$ is a
homotopy equivalence of $M_1\ssm V_1$ to $M_2\ssm V_2$. Lemma
\ref{acmtoplemma2} therefore implies that $s_n:M\to C_n$ is a homotopy
equivalence.

Since $s_n$ is in the homotopy class of $\rho_n$, as a map into $N_{\rho_n}$, the inclusion of $\pi_1(C_n)$
into $\pi_1(N_{\rho_n})$ is an isomorphism, so $C_n$ is a compact core for $N_{\rho_n}$.
Then, by construction, $(C_n,P_n)$ is a relative compact core for $N_{\rho_n}^\ep$.
Lemma \ref{paredupgrade} implies that
$$s_n:(M,P)\to (C_n,P_n)$$
is a homotopy equivalence of pairs.
It follows from this that $(C_n,P_n)$ is a relative compact core for
$N_{\rho_n} \ssm \MT_{\ep,n}(P)$.

\subsubsection*{Step 5: Homotopy to the final embedding}
We next show that \hbox{$s_n:(M,P)\to (C_n,P_n)$} is pared homotopic to a homeomorphism.
Recall that relative compact cores are unique up to admissible isotopy and
their complements have  product structures. (See section \ref{deformation}.)
Therefore, there exists a pared homeomorphism
\hbox{$g_n: (C_n,P_n)\to(\kappa_n(M),P_n)$} in the isotopy class of the inclusion map.
Then $\hhat s_n=g_n^{-1}\circ\kappa_n$ is a pared homeomorphism in the homotopy class of $\rho_n$
and hence homotopic to $s_n$.
Since 
$N_{\rho_n}^\ep$ deformation retracts onto $C_n$,
the homotopy  between $s_n$ and $\hhat s_n$ can be presumed to remain in $C_n$.

We wish to show that 
$\hhat s_n$ can be admissibly isotoped so that it preserves the
decomposition of $\AAA$. That is, it takes 
any component $R$ of $M_\mathcal A$ to $\psi_n( Y_R)$ and 
any component $U$ of $M-M_\mathcal A$ to $\MT_{2\ep,n}(U)\cap C_n$. 
To do this, we first show that $\hhat s_n$ can be admissibly isotoped so that 
$\hhat s_n({\rm Fr}(M_\mathcal A))=s_n({\rm Fr}(M_\mathcal A))$.

Let
${\mathcal B }$ denote the collection of components of  ${\rm Fr}(M_\mathcal A)$.
If $B\in \mathcal B$, let $\hhat B=\hhat s_n^{-1}(s_n(B))$ 
and 
$$\hhat{\mathcal B}=\{\hhat B\}_{B\in\mathcal B}.$$
Notice that if $B\in\mathcal B$, then $s_n(B)$ is an essential annulus in $(C_n,P_n)$, since otherwise it would bound
a solid torus $V$ in $C_n$ such that $\partial V-B\subset \partial C_n$.
So,  since $\hhat s_n$ is a pared homeomorphism, $\hhat B$ is an essential annulus in $(M,P)$.
The enclosing property for characteristic submanifolds, see Johannson \cite[Prop. 10.7]{johannson},
implies that $\hhat{\mathcal B}$ is admissibly isotopic into $\Sigma(M,P)$, so we may assume that $\hhat{\mathcal B}$
is contained in the interior of $\Sigma(M,P)$.

If a component $B$ of $\hhat\BB$ is homotopic into a solid or
thickened torus component $X$ of $\Sigma(M,P)$ but is not contained in
$X$, it must be contained in an adjacent interval bundle component,
and cobound a solid torus with a component of ${\rm Fr}(X)$. Since
$\hhat\BB$ is embedded, all these solid tori are disjoint or nested.
Thus, after adjusting $\hhat s_n$ by an isotopy supported on a regular
neighborhood of the union of
such solid tori, we may assume that each component of $\hhat\BB$ that
is homotopic to a solid or thickened torus component of $\Sigma(M,P)$
is already contained in it.

\par\medskip\noindent{\em Solid/thickened torus components:}
Consider a component $V$ of  $M\ssm M_\AAA$
which is a regular neighborhood of a solid or thickened torus component
$X$ of  $\Sigma(M,P)$.  Let ${\mathcal B}_V\subset\BB$ be the components of ${\rm Fr}(V)$ and let
$\hhat{\mathcal B}_V$ denote the corresponding subset of
$\hhat{\mathcal B}$. By the previous paragraph we know that the
components of $\BB_V$  lie in the  interior of $X$.

The pair $(X,{\rm Fr}(X))$ admits a Seifert-fibration over
$(E,d)$, where $E$ is either a disk with $d$ a collection of arcs in
$\boundary E$, or an annulus with $d$ a collection of arcs in one
component of $\boundary E$. 
There is at most one singular point in $E$ if
it is a disk, and none if it is an annulus.
Since $\hhat{\mathcal B}_V$ is a collection of essential annuli in $X$, it may be isotoped, by an isotopy supported on $X$,
to the $S^1$-bundle over a collection $e$ of arcs in $E$  with the end
points of each arc in distinct components of the complement of $d$
(see Johannson \cite[Prop. 5.6]{johannson}).
Since $s_n(\BB_V)$ bounds a solid or thickened torus in $C_n$
whose fundamental group is a maximal abelian subgroup, the same is
true for $\hhat\BB_V$. It follows that the arcs $e$ are the boundary
in $E$ of a connected region $W$ which is either an essential subannulus when $E$
is an annulus, or a disk when $E$ is a disk. Moreover if $E$ has a
singular point then it must lie in $W$, since otherwise the preimage
of $W$ would be a solid torus whose core is homotopic to a proper power of the core of $X$,
and hence generates a non-maximal subgroup of $\pi_1(M)$.
Since the cardinalities of $e$ and $d$ are equal,
it follows that each component of $e$ must be parallel to a component
of $d$ across a region that does not contain a singular point. 
It follows that $\hhat{\mathcal B}_V$ is
isotopic to the frontier of $X$, and hence to ${\mathcal B}_V$, by an isotopy supported on $V$.

\par\medskip\noindent{\em Interval bundle components:}
If $\Sigma$ is an interval bundle component of $\Sigma(M,P)$ with base
surface $F$, let $\hhat\Sigma$ be obtained from $\Sigma$ by appending the closure of ${\mathcal N}(A)$ whenever $A$ is a component
of ${\rm Fr}(\Sigma)$ contained in $\mathcal A$. Then $\hhat\Sigma$ is an interval bundle with base
surface $\hhat F$ (which is obtained from $F$ by appending collar neighborhoods of certain components of
$\partial F$).
Let $\BB_\Sigma$ denote the components of $\BB$ which are
contained in $\hhat\Sigma$ and not isotopic into a solid or thickened torus
component of $\Sigma(M,P)$. By construction, the corresponding subset
$\hhat\BB_\Sigma$ of $\hhat \BB$ is contained in the interior of
$\Sigma$.   In an interval bundle, every system of disjoint essential annuli is admissibly isotopic
to the subbundle over a multicurve in the base (again see Johannson \cite[Prop. 5.6]{johannson}), and two such multicurves are
isotopic if and only if they are homotopic.
Since  $\BB_\Sigma$ and $\hhat\BB_\Sigma$ are homotopic, they are isotopic to  sub-bundles over homotopic
collections of curves, so they are
isotopic. Therefore, we may admissibly isotope $\hhat s_n$,
by an isotopy supported on a regular neighborhood of $\hhat \Sigma$, so that 
$\hhat {\mathcal B}_\Sigma= {\mathcal B}_\Sigma$.

\medskip

Since the supports of all the resulting isotopies may be chosen to be disjoint, they can be performed simultaneously.
Therefore, we have isotoped $\hhat s_n$ 
so that $\hhat s_n({\mathcal B})=s_n({\mathcal B})$.

Since $\hhat s_n$ is a homeomorphism, $\hhat s_n({\rm Fr}(M_{\mathcal A}))=s_n({\rm Fr}(M_{\mathcal A}))$
and $s_n$ takes components of
$M\setminus{\rm Fr}(M_{\mathcal A})$ to components of $C_n\setminus s_n({\rm Fr}(M_{\mathcal A}))$,
we see that $\hhat s_n$ also takes components of
$M\setminus{\rm Fr}(M_{\mathcal A})$ homeomorphically to components of $C_n\setminus s_n({\rm Fr}(M_{\mathcal A}))$.
Notice that every component of $M_{\mathcal A}$ has non-abelian fundamental group, while
every component of $M-M_{\mathcal A}$ has abelian fundamental group.
Since $\hhat s_n$ is homotopic to $s_n$,  and no two components of $M_{\mathcal A}$ are homotopic,
we see that  $\hhat s_n(R)=s_n(R)=\psi_n( Y_R)$ if 
$R$ is a component of $M_\mathcal A$.
Similarly, if $U$ is a component of $M\ssm M_\mathcal A$, then
$\hhat s_n(U)=s_n(U)=C_n\cap \MT_{2\ep,n}(U)$. In short, our decomposition is preserved by $\hhat s_n$.

\subsubsection*{Step 6: The metric}
Let $n_0$ be large enough that our constructions work and let 
$$g:(M_\mathcal A,P_\mathcal A)\to (Y, \partial\MT_{\ep}(P_\mathcal A\cap P)\cup \partial\MT_{2\ep}(P_\mathcal A\setminus P))$$
be the diffeomorphism given by  the restriction of $\psi_{n_0}^{-1}\circ\hhat s_{n_0}$ to $M_\mathcal A$. Let $m$ be the 
metric given by pulling back, by $g$, the metric on $Y$ to a metric on $M_\mathcal A$.
For all large enough $n$, define
$$f_n:(M_\mathcal A,P_\mathcal A)\to (Y_n, \partial\MT_{\ep,n}(P_\mathcal A\cap P)\cup \partial\MT_{2\ep,n}(P_\mathcal A\setminus P))$$
to be given by $\psi_n\circ g$. Notice that $f_n$ is then $2$-bilipschitz with respect to $m$ on $M_\mathcal A$.
Since $f_n$ is isotopic to $\hhat s_n$ on ${\rm Fr}(\mathcal A)$ (and pared homotopic on $(M_\mathcal A,P_\mathcal A)$)
and $\hhat s_n$ extends to a pared homeomorphism
between $(M,P)$ and $(C_n,P_n)$, $f_n$ also extends to a pared homeomorphism from $(M,P)$ to $(C_n,P_n)$
which is homotopic to $\hhat s_n$ and hence in the homotopy class of $\rho_n$. The remaining properties
hold by construction and are easily checked. Thus $f_n$ is a model
core map for $(\AAA,m,\ep)$. This concludes the proof of Theorem
\ref{core for sequence}.
\qed

\subsection{The algebraically convergent case}

One can improve on the statement of Theorem \ref{core for sequence} in the case when the sequence
$\{\rho_n\}$ is algebraically convergent. Specifically, 
after passing to a subsequence so that $\{N_{\rho_n}\}$ converges geometrically to a hyperbolic
3-manifold $N$, one may assume that there is a single model core for all large enough $n$,
the model core isometrically embeds in $N$,
the  partial core lifts to $N_\rho$ (where $\rho=\lim\rho_n$), and
the model core map is globally 2-bilipschitz for all large enough $n$.
This improvement will not be used in this paper, but we expect this strengthened form to have applications in future work.

\begin{theorem}{core for convergent sequence}
Suppose that  $(M,P)$ is a pared manifold with pared incompressible boundary, $\ep\in(0,\mu_3/2)$,
and $\{\rho_n\}$ is a sequence in $D(M,P)$, representing points in
$AH_0(M,P)$, and converging to \hbox{$\rho\in D(M,P)$}. 
Moreover suppose $\{\rho_n(\pi_1(M))\}$ converges geometrically to $\Gamma$
and $\pi:N_\rho\to N=\mathbb H^3/\Gamma$ is the covering map.

Then, there  is a metric $m$ on $M$ such that, for all large enough $n$, 
$(\emptyset,m,\epsilon)$ is a model core which controls $\rho_n$
with model core map 
$$f_n:(M,P)\to (N_{\rho_n}\setminus \MT_{\ep,n}(P),\partial\MT_{\ep,n}(P)),$$
i.e. $f_n$
is a 2-bilipschitz embedding  such that $f_n(\partial M)\subset N_{\rho_n}\setminus (N_{\rho_n})_{(0,\ep)}$
and there exists a homeomorphism $\phi_n$ supported on $\Sigma(M,P)$ so that
$f_n\circ\phi_n$ is in the homotopy class of $\rho_n$.

Moreover, there exists a robust collection $\mathcal A$ of essential annuli for $(M,P)$, such that
\begin{enumerate}
\item
$\ell_\rho(\mathcal A)=0$ and if $B$ is  an essential annulus in $(M_\mathcal A,P_\mathcal A)$, then $\ell_\rho(B)>0$.
\item
If $A\in\mathcal A$, then $\pi_1(A)$ is a maximal cyclic subgroup of $\pi_1(M)$.
\item
There exists an embedding $g:M\to N$, isometric with respect to $m$, such that if $R$ is a component
of $M_\AAA$, then
$g|_{R}$ lies  in the homotopy class of $\pi_*\circ \rho|_{\pi_1(R))}$.
\item
If $\hhat M_\AAA$ is obtained from $M_\AAA$ by appending all
components of $M\setminus M_\AAA$ which
contain a component of $P$, then $\phi_n$ is supported on $M\setminus \hhat M_\AAA$ and
the restriction of  $g$ to $\hhat M_\AAA$ lifts to an embedding
in $N_\rho$.
\end{enumerate}
\end{theorem}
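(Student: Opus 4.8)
The plan is to rerun the six-step argument of Section~\ref{uniform core proof} that proves Theorem~\ref{core for sequence}, using algebraic convergence both to remove the need to pass to further subsequences and to strengthen the output. The first step is to fix the annulus system $\mathcal A$. Because $\{\rho_n\}\to\rho$ in $D(M,P)$, for every fixed curve $\gamma$ we have $\ell_{\rho_n}(\gamma)\to\ell_\rho(\gamma)$, so there is no ``spiralling'' in the window base and the remarking homeomorphisms produced by Corollary~\ref{ourrelcompactness} may be taken to be the identity. Concretely I would take $\mathcal A$ to be a maximal robust collection of essential annuli in $(M,P)$ whose core curves are parabolic under $\rho$: include the frontiers of all thickened torus components of $\Sigma(M,P)$ and of every solid torus component $V$ with $\ell_\rho(v)=0$, and adjoin the $I$-bundle over a maximal two-sided multicurve of $\rho$-parabolic curves in the window base. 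This makes $\ell_\rho(\mathcal A)=0$ and, by maximality, forces every essential annulus in $(M_\mathcal A,P_\mathcal A)$ to have positive $\rho$-length, giving item~(1). Item~(2) is the one delicate point here: the frontier annuli of a Seifert-fibered solid torus piece can have cores that are proper powers in $\pi_1(M)$, so arranging that $\pi_1(A)$ is maximal cyclic for each $A\in\mathcal A$ requires refining $\mathcal A$ within its isotopy class, using that the algebraic limit pins down exactly which curves become parabolic.

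Next I would build the embedding $g\colon M\to N$. Since $\{\rho_n\}$ converges algebraically, the basepoints $b_n^R$ all converge, so every component of $M_\mathcal A$ lies in a single equivalence class in the sense of Step~1 of Section~\ref{uniform core proof}, with common geometric limit $\Gamma=\pi_1(N)$, and one may take $\rho^R=\pi_*\circ\rho|_{\pi_1(R)}$. Proposition~\ref{partialcoreD} then yields disjoint relative compact carriers $\{(Y_R,Z_R)\}$ in $N^{2\ep}$ for the groups $\rho^R(\pi_1(R))$, and, as in Step~2, attaching solid or thickened tori inside the $\ep$-thin part of $N$ along the frontier annuli produces an embedded submanifold $Y\subset N$ together with a map of pairs $h\colon M_\mathcal A\to Y$ which is a pared homotopy equivalence on each piece (Lemmas~\ref{pieces are pared} and~\ref{paredupgrade}). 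Extending $h$ over the components of $M\ssm M_\mathcal A$ as in Step~3 --- sending each such solid or thickened torus into its cusp --- gives an embedding $g\colon M\to N$ with $g|_R$ in the homotopy class of $\pi_*\circ\rho|_{\pi_1(R)}$, and we let $m$ be the pullback by $g$ of the hyperbolic metric of $N$, so $g$ is isometric by construction; this gives item~(3). For item~(4), observe $\pi_1(Y_R)\cong\rho(\pi_1(R))\subset\rho(\pi_1(M))=\pi_*(\pi_1(N_\rho))$, and likewise for the cusp tori containing components of $P$, so $g|_{\hhat M_\mathcal A}$ satisfies the $\pi_1$-criterion for lifting through $\pi\colon N_\rho\to N$; since a lift of an embedding through a covering is again an embedding, $g|_{\hhat M_\mathcal A}$ lifts to an embedded submanifold of $N_\rho$. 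One point needing care here is that distinct degenerate tori sharing a cusp of $N$ must still embed disjointly; this is arranged by shrinking them radially inside the cusp, as in the surface-group case.

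Finally I would push $Y$ forward into the approximates. For all large $n$, Lemma~\ref{basic geometric limit facts} provides a $K_n$-bilipschitz embedding of a compact exhausting submanifold of $N$ into $N_{\rho_n}$ with $K_n\to1$ carrying the limit basepoint to $b_{\rho_n}$; composing with $g$ gives $f_n\colon M\to N_{\rho_n}$, which for large $n$ is $2$-bilipschitz with respect to $m$ on all of $M$ (this is the global bilipschitz improvement) and sends $\partial M$ off the $\ep$-thin part. As in Steps~4 and~5, one checks via Lemma~\ref{acmtoplemma2} that the union $C_n$ of $f_n(Y)$ with the attaching tubes is a relative compact core for $N_{\rho_n}\ssm\MT_{\ep,n}(P)$ and that $f_n$ promotes to a pared homeomorphism respecting the decomposition induced by $\mathcal A$; the resulting $(\emptyset,m,\ep)$ then controls every sufficiently large $\rho_n$. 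The only residual remarking $\phi_n$ now comes from the choices involved in extending over the solid and thickened tori of $M\ssm M_\mathcal A$ that do not meet $P$, together with Dehn twists along $\mathcal A$, all of which are supported on $M\ssm\hhat M_\mathcal A$, as required by item~(4). I expect the main obstacle to be exactly this last step: isotoping the homeomorphism furnished by $[\rho_n]\in AH_0(M,P)$ so that it preserves the $\mathcal A$-decomposition while confining the discrepancy $\phi_n$ to the degenerate tori and the collar of $\mathcal A$. This is precisely where algebraic convergence is used, since no interval bundle piece of $M_\mathcal A$ degenerates (item~(1)), so the pleated-surface estimates show that the identification on each such piece is already determined up to isotopy, and $\phi_n$ cannot be forced to act there.
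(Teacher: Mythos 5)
Your overall architecture matches the paper's: take $\mathcal A$ to be a maximal robust collection with $\ell_\rho(\mathcal A)=0$, apply Proposition \ref{partialcoreD} in the single geometric limit, build an embedded copy of $M$ there, pull back the metric, and push forward into the approximates. But the two steps you yourself flag as delicate are left genuinely unresolved, and in both cases the direction you indicate would not work. First, property (2): you propose to arrange that each $\pi_1(A)$ is maximal cyclic by ``refining $\mathcal A$ within its isotopy class.'' This cannot succeed: whether the core of $A$ is a proper power in $\pi_1(M)$ depends only on its free homotopy class, which isotopy does not change, and robustness together with maximality forces you to include the entire frontier of any solid torus component of $\Sigma(M,P)$ whose core is $\rho$-parabolic, including frontier annuli that wind several times around that core. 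The correct argument (Lemma \ref{parabolic annuli are primitive} in the paper) is that such annuli simply cannot occur in $\mathcal A$: if $\pi_1(A)$ were not maximal cyclic, its core would have a root $\alpha$ with $h_\rho(\alpha)$ not homotopic into $\partial M_\rho$; since $\rho(\pi_1(A))$ is parabolic and parabolicity passes to roots in a torsion-free discrete group, $\rho(\alpha)$ would be a parabolic element not carried by the pared locus of the relative compact core of $N_\rho^\ep$, a contradiction. So (2) is a consequence of $\ell_\rho(\mathcal A)=0$ rather than something to be arranged, and it is needed downstream: it is what guarantees that $C\cap\MT_{2\ep}(A)$ is a compact core of the corresponding tube and thus permits the uniform construction of $C$.

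Second, property (4). You reduce the lifting of $g|_{\hhat M_\AAA}$ to the $\pi_1$-criterion applied piece by piece, plus the observation that a lift of an embedding is an embedding. That gives a lift over each component of $M_\AAA$ and over each torus piece separately, but the content of (4) is that there is a single coherent embedded lift of each connected component of $\hhat M_\AAA$, and these components are unions of pieces of $M_\AAA$ glued to solid or thickened tori along frontier annuli. For the section $\eta$ of $\pi$ over $Y$ to extend over a torus piece $V$ containing a component $P_0$ of $P$, you need the cusp $\MT_{2\ep}(P_0)$ of $N$ to lift homeomorphically to $N_\rho$, i.e.\ you must show that the geometric limit of $\{\rho_n(\pi_1(P_0))\}$ coincides with the algebraic limit $\rho(\pi_1(P_0))$. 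This is precisely the point where algebraic and geometric limits can differ in general (the Anderson--Canary phenomenon the paper warns about), and the paper proves it using the maximality of $\pi_1(P_0)$ as an abelian subgroup together with the fact that, after conjugating so that all $\rho_n(\pi_1(P_0))$ lie in the parabolic subgroup stabilizing $\infty$, algebraic and geometric limits agree inside a planar translation group. Your remark about shrinking tori radially addresses disjointness of the lifts, not their existence or coherence.
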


Notice that $[\rho]$ need not lie in $AH_0(M)$. In particular, it may not be the case that $(\emptyset,m,\ep)$ is a model
core for $\rho$.

The proof of Theorem \ref{core for convergent sequence} is simpler than that of Theorem \ref{core for sequence}
and is essentially contained in section 8 of Anderson-Canary-McCullough \cite{ACM}, 
but we will explain how to modify our proof of Theorem \ref{core for sequence} to obtain
Theorem \ref{core for convergent sequence}.

\medskip\noindent
{\em Sketch of proof of Theorem \ref{core for convergent sequence}:}
Choose $\mathcal A$ to be a maximal robust collection of essential annuli so that $\ell_\rho(\mathcal A)=0$,
so $\mathcal A$ satisfies (1).

Property (2) will follow quickly from the following topological lemma.

\begin{lemma}{parabolic annuli are primitive}{}
Suppose that $(M,P)$ is a pared 3-manifold with pared incompressible boundary
and $h:(M,P)\to (M_0,P_0)$ is a pared homotopy equivalence.
If $A$ is an essential annulus in $(M,P)$ and $\pi_1(A)$ is not a maximal cyclic subgroup of $M$,
then there exists a root  $\alpha$ of  the generator of $\pi_1(A)$ so that $h(\alpha)$ is not homotopic into the boundary of $M_0$.
\end{lemma}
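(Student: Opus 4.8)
The plan is to first push the essential annulus $A$ across $h$ to obtain an embedded annulus in $(M_0,P_0)$ with controlled core, thereby reducing the lemma to a statement about a single pared manifold with no homotopy equivalence in sight, and then to prove that statement by passing to an infinite cyclic cover. Throughout, let $a$ be the generator of $\pi_1(A)$; since $A$ is $\pi_1$-injective, $a$ has infinite order. Let $\alpha$ generate the maximal cyclic subgroup of $\pi_1(M)$ containing $a$ — this is unique, since centralizers in Kleinian groups are cyclic or $\Z^2$ — so that $a=\alpha^k$ with $k\ge 2$, the hypothesis being exactly that $\pi_1(A)$ is not a maximal cyclic subgroup. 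As $h_*$ is an isomorphism it carries maximal cyclic subgroups to maximal cyclic subgroups, so $h(\alpha)$ generates the maximal cyclic subgroup of $\pi_1(M_0)$ containing $h(a)=h(\alpha)^k$; this $\alpha$ will be the required root.

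For the transport step I would first use Theorem \ref{charprop}(4) to isotope $A$ admissibly into a component $\Sigma_i$ of $\Sigma(M,P)$, where it becomes a vertical annulus (and is still essential, so its base curve is essential, $2$-sided and not $\partial$-parallel). As in the proof of Lemma \ref{webordegenerate}, Johannson's Classification Theorem together with \cite[Lemma 2.11.3]{canary-mccullough} lets me homotope $h$ so that it carries $\Sigma(M,P)$ onto $\Sigma(M_0,P_0)$ and restricts, up to homotopy, to a fibered homeomorphism from $\Sigma_i$ onto a component of the same type. Then $h(A)$ is freely homotopic to a standard embedded essential admissible annulus $A_0$ in that component — a vertical annulus over an embedded essential $2$-sided curve of the base surface when the component is an interval bundle, or a frontier annulus when it is a solid or thickened torus — whose core is freely homotopic to $h(a)=h(\alpha)^k$.

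It remains to prove: if $(M_1,P_1)$ is a pared manifold with pared incompressible boundary and $A_1$ is an embedded annulus in $(M_1,P_1)$ whose core equals $\beta^k$ with $k\ge2$ and $\beta$ primitive in $\pi_1(M_1)$, then $\beta$ is not freely homotopic into $\partial M_1$. Suppose it were, into a component $F$; after conjugation $\beta\in\pi_1(F)$, and $\beta$ is primitive in $\pi_1(F)$ since $F$ is incompressible. I would pass to the cover $\widehat M\to M_1$ corresponding to $\langle\beta\rangle\cong\Z$: it has a compact core $\widehat C$, which (being a compact irreducible $3$-manifold with $\pi_1\cong\Z$) is a solid torus, with $\partial\widehat C$ a torus. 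A boundary curve $c$ of $A_1$ lies on a component $F'$ of $\partial M_1$, is primitive in $\pi_1(F')$, and satisfies $[c]=\beta^k$. Tracking the lifts, the component of the preimage of $F$ through the basepoint is an annulus with core freely homotopic to $\beta$, whereas the component of the preimage of $F'$ containing a lift of $c$ is an annulus with core freely homotopic to $\beta^k$; moreover these two components are distinct, for otherwise $\beta$ and $\beta^k$ would be cores of the same annulus, forcing $k=1$. Enlarging $\widehat C$ to contain compact sub-annuli of both, we obtain two disjoint annuli on the torus $\partial\widehat C$ whose cores are sent, under $H_1(\partial\widehat C)\to H_1(\widehat C)\cong\Z$, to $\pm 1$ and $\pm k$ respectively; but disjoint annuli on a torus have isotopic cores, so $k=1$, a contradiction. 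Applying this with $(M_0,P_0)$, $A_0$ and $\beta=h(\alpha)$ then proves the lemma.

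The main point requiring care is the transport step: one must check that the annulus $A_0$ produced from $h(A)$ is genuinely embedded, admissible, and has a non-primitive core. All of this, however, runs parallel to the use of the characteristic submanifold already made in the proof of Lemma \ref{webordegenerate}, so I expect it to go through without new difficulties; the only delicate point in the final step is to exclude the coincidence of the two preimage components, which is exactly what the inequality $k\ge 2$ provides.
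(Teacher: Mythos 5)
Your proof is correct, and while its first half follows the paper's route, its second half is genuinely different and in fact more self-contained. Both arguments use Theorem \ref{charprop}(4) and Johannson's Classification Theorem to transport $A$ to an embedded essential annulus $A_0$ in $(M_0,P_0)$ whose core represents $h(\alpha)^k$ with $k\ge 2$; the paper organizes this via Johannson's Lemma 32.1 (so $A$ is either the boundary of a regular neighborhood of an essential M\"obius band in an interval bundle, or a frontier annulus of a solid torus component around which it winds $k\ge 2$ times) and then simply reads the conclusion off the structure of the image component. You instead isolate the clean statement that a primitive $\beta$ whose power $\beta^k$, $k\ge2$, is represented by an essential simple closed curve on the incompressible boundary cannot itself be homotopic into the boundary, and prove it in the infinite cyclic cover by exhibiting two disjoint essential annuli on the torus boundary of a solid torus core whose cores would have to map to $\pm1$ and $\pm k$ in $H_1\cong\Z$. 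This supplies a justification that the paper's final sentence leaves implicit, and it makes the case analysis on where $A$ sits nearly irrelevant. Two cautions. First, your claim that $h$ restricts, up to homotopy, to a fibered homeomorphism on each component of $\Sigma(M,P)$ is correct for interval bundle components by \cite[Lemma 2.11.3]{canary-mccullough} but false in general for solid torus components: homotopy equivalences can rearrange how frontier annuli wind around such components (this is exactly the phenomenon of \cite{AC-pages}). What you actually use --- that $h$ carries ${\rm Fr}(\Sigma(M,P))$ to ${\rm Fr}(\Sigma(M_0,P_0))$ preserving cores up to free homotopy, so that $h(A)$ is admissibly homotopic to a frontier annulus of the corresponding solid torus --- does hold, since Johannson's theorem makes $h$ a homeomorphism on the complement of the characteristic submanifold. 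Second, the ``enlarging'' of the compact core $\hhat C$ so that its boundary contains the two compact sub-annuli should be justified by the relative core theorem of \cite{mcculloughRCC}; with that citation in place the rest of the covering argument is sound.
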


\begin{proof}
Up to isotopy, the annulus $A$ is either (a) the regular neighborhood of an essential M\"obius band in an interval bundle component
$\Sigma_0$ of $\Sigma(M,P)$ or (b) a component of the frontier of a solid torus component $V$ of $\Sigma(M,P)$ and $\pi_1(A)$
is a proper subgroup of $\pi_1(V)$,
see Johannson \cite[Lemma  32.1]{johannson}. 
Since $(M,P)$ has pared incompressible boundary, $(M_0,P_0)$ also has pared incompressible boundary,
see \cite[Prop. 1.2]{bonahon} or  \cite[Lemma 5.2.1]{canary-mccullough}.
Johannson's Classification Theorem \cite[Thm. 24.2]{johannson} implies that $h$ may be admissibly homotoped so that 
$h^{-1}(\Sigma(M_0,P_0))=\Sigma(M,P)$ and $h$ is a homeomorphism on $\overline{M-\Sigma(M,P)}$.
In case (a), one may apply \cite[Lemma 2.11.3]{canary-mccullough} to admissibly homotope $h$ so that it 
is a homeomorphism of $\Sigma_0$ to an interval bundle component $\Sigma_0$ of $\Sigma(M_0,P_0)$, 
while in case (b),  $h(A)$ is a frontier annulus of a solid torus component $V_0$ of $\Sigma(M_0,P_0)$ so that
$\pi_1(h(A))$ is a proper subgroup of $\pi_1(V_0)$. In either case, the core curve of $A$ has
a root $\alpha$ in $\pi_1(M)$ so that $h_*(\alpha)$ is not represented by a simple closed curve on $\partial M_0$.
\end{proof} 

Now suppose $A$ is an essential annulus in $\AAA$, so $\rho(\pi_1(A))$ is parabolic, since $\ell_\rho(A)=0$.
Let $(M_\rho,Q_\rho)$ be a relative compact core for $N_\rho^\ep$ and let $P_\rho$ be the collection of components
of $Q_\rho$ lying in $\partial\MT(P)$.
Then there exists a pared homotopy equivalence $h_\rho:(M,P)\to (M_\rho,P_\rho)$ in the homotopy
class of $\rho$. If $\pi_1(A)$ is not a maximal cyclic subgroup, then, by Lemma \ref{parabolic annuli are primitive},
there exists a root $\alpha$ of
$\pi_1(A)$ so that $h_*(\alpha)$ is not homotopic into $\partial M_\rho$.
Since $\rho(\pi_1(A))$ is parabolic, $\rho(\alpha)$ is also parabolic.
However, this contradicts the fact that $(M_\rho,Q_\rho)$ is a relative compact core for $N_\rho^\ep$.
Therefore, (2) holds.

\medskip

Let $\mathcal R$ be the collection of components of $(M_\AAA,P_\AAA)$. Proposition \ref{partialcoreD}
provides a disjoint collection $\{ (Y_R,Z_R)\}_{(R,Q)\in\RR}$ of relative compact
carriers for $\{\rho^R(\pi_1(R))\}$ in $N^{2\ep}$.  Let $h_R:(R,Q)\to (Y_R,Z_R)$
be the associated pared homotopy equivalence.

We may adjust $\{ (Y_R,Z_R)\}$ and the maps $\{h_R\}$,
exactly as in the proof of Theorem \ref{core for sequence}, so that if $P_0$ is a component of $P$ contained
in a component $(R,Q)$ of $\mathcal R$, then $h_R(P_0)\subset \partial N^\ep$.
Let $Y=\bigcup Y_R$, $Z=\bigcup Z_R$ and
\hbox{$h=\bigcup h_R:Y\to N^\ep$}.
Let $\eta:Y\to N_\rho$ be the section of $\pi$ over $Y$.

Let $\MT_{2\ep}(\AAA)$ denote the components of $N_{(0,2\ep)}$ associated
to the components of $\AAA$. For each such component $\MT$,
let $B(\MT)$ be an incompressible annulus (if $\MT$ has rank 1) or torus (if not)
containing the intersection of $\boundary \MT$ with $Z$. Let $B(\AAA)$
be the union of these annuli and tori.
We then append  to $Y$ the set of all
points in \hbox{$\MT_{2\ep}(\AAA)\setminus\MT_\ep(\AAA)$} which project radially to $B(\AAA)$. The resulting submanifold
of $N^\ep$ will be called $C$. 
(Notice that property (2) implies that \hbox{$C\cap\MT_{2\ep}(A)$} is always a compact core for $\MT_{2\ep}(A)$. This
fact is what allows us to use a simplified and uniform construction of $C$ in the algebraically convergent case.)
We construct a pared locus $P_C$ for $C$ by first including $h(P\cap M_\AAA)$.
If $P_0$ is a component of $P$ contained  in a solid or thickened torus
component $X$ of $\Sigma(M,P)$ and $A$ is a component of ${\rm Fr}(X)$, then we 
add $\boundary \MT_\ep(A) \cap C$ to $P_C$. In particular, $P_C=\partial N^\ep\cap C$.

For all large enough $n$, there exist 2-bilipschitz maps $\psi_n:C\to N_{\rho_n}$
so that $\psi_n$ takes \hbox{$C\cap \partial N^{\ep}$} to
\hbox{$\boundary N_{\rho_n}^\ep$},
$\psi_n$ takes \hbox{$C\cap  (N\setminus N^{2\ep})$} to \hbox{$(N_{\rho_n}\setminus N_{\rho_n}^{2\ep})$}
and $\psi_n|_{Y_R}$ lies in the homotopy class of $\rho_n\circ\rho^{-1}\circ (\eta|_{Y_R})_*$ for all components $Y_R$ of $Y$.
Let $(C_n,P_n)=(\psi_n(C),\psi_n(P_C))$. One may then, just as in the proof of Theorem \ref{core for sequence},
construct, for all large enough $n$,  
a homeomorphism \hbox{$\hhat s_n:(M,P)\to (C_n,P_n)$} in the homotopy class of $\rho_n$ such that
$\hhat s_n(R)=\psi_n(Y_R)$ for all $(R,Q)\in \RR$.

Let $g:(M,P)\to (C,P_C)$ be given by $(\psi_{n_0}|_C)^{-1}\circ\hhat
s_{n_0}$ (for a fixed large enough $n_0$)
and pull back the metric on $C$ to obtain a metric $m$ on $M$. Then $f_n=\psi_n\circ g$ is a \hbox{2-bilipschitz}
homeomorphism, for all large enough $n$. (Notice that the presence of an embedded copy of $M$ in the geometric
limit allows for a more concrete construction of $f_n$ in the algebraically convergent case.)
Let
$$\phi_n=f_n^{-1}\circ\hhat s_n=\hhat s_{n_0}^{-1}\circ \psi_{n_0}\circ\psi_n^{-1}\circ \hhat s_n$$
and notice that $f_n\circ\phi_n=\hhat s_n$ is in the homotopy class of $\rho_n$ as desired.
By the choice of $\psi_n$ in the previous paragraph, 
the restriction of $\psi_{n_0}\circ\psi_n^{-1}$ to each component of
$\hhat s_n(M_\AAA)$ is in the homotopy class of $\rho_{n_0}\circ \rho_n^{-1}$. In particular, the restriction of 
\hbox{$ \psi_{n_0}\circ\psi_n^{-1}$} to $\hhat s_n(M_\AAA)$ is homotopic to $\hhat s_{n_0}\circ \hhat s_n^{-1}$, so
$\phi_n$ is homotopic, hence admissibly homotopic (see Lemma \ref{paredupgrade}), to the identity on $(M_\AAA,P_\AAA)$. 
Therefore, we may assume that $\phi_n$ is the identity on $M_\AAA$ and thus on $M\setminus\Sigma(M,P)\subset M_\AAA$.
It follows that $f_n$ is a model core map.
Property (3) is immediate from the construction.

Notice that $\eta\circ g$ is a lift of the restriction of $g $ to $M_\AAA$.
Suppose that $V$ is a component of $M\setminus M_\AAA$ which
contains a component $P_0$ of $P$. It follows from \cite[Prop. 2.7]{ELC2} and the fact that $\pi_1(P_0)$ is a
maximal abelian subgroup of $\pi_1(M)$, that $\pi_1(\MT(P_0))$ is the geometric limit of $\{\rho_n(\pi_1(P_0))\}$.
However, one may assume, after conjugating by a compact family of
elements of ${\rm PSL}(2,\mathbb C)$,
that $\rho_n(\pi_1(P_0))$ all lie in the parabolic subgroup
stabilizing $\infty$. Within this subgroup, which is a planar
translation group, algebraic limits equal geometric limits, and so 
$\pi_1(\MT(P_0))=\rho(\pi_1(P_0))$.
Therefore 
$\MT_{2\ep}(P_0)$ lifts homeomorphically to a component of $(N_\rho)_{(0,2\ep)}$,
which implies that $\eta\circ g$ can be extended over $V$. 
Lemma \ref{basic geometric limit facts}(4) then implies that the restriction of $\psi_{n_0}\circ\psi_n^{-1}$
to each component of $\hhat s_n(\hhat M_\AAA)$ is in the homotopy class of $\rho_{n_0}\circ \rho_n^{-1}$.
We may thus assume, by the argument in the previous paragraph, that $\phi_n$ restricts to the
identity on $\hhat M_\AAA$. So property (4) holds as well.
\qed

\section{Proof of the main theorem}
\label{proof}

We are now ready to prove the main theorem, which we state here in the
pared setting: 

\begin{theorem}{main gen pared}
Let $(M,P)$ be a compact, orientable, pared 3-manifold
with incompressible pared boundary.
For each curve $\alpha$ in $\boundary_{nw} (M,P)$, there exists $K=K(\alpha)$ such
that
$$\ell_{\sigma_{(M,P)}(\rho)}(\alpha)\le K$$
for all $\rho\in AH_0(M,P)$.
\end{theorem}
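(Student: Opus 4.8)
The plan is to derive Theorem~\ref{main gen pared} from the Uniform Core Theorem (Theorem~\ref{uniform cores}) together with the surface-group length bound (Theorem~\ref{upper bound}), by passing to the cover of $N_\rho$ associated to the boundary component carrying $\alpha$. Fix $\alpha$ and let $S$ be the component of $\partial_0(M,P)$ containing it; we may assume $\alpha$ is essential and non-peripheral in $S$, since otherwise it is homotopic into $P$ and $\ell_{\sigma_{(M,P)}(\rho)}(\alpha)=0$ for every $\rho$. By the definition of the skinning map, $\ell_{\sigma_{(M,P)}(\rho)}(\alpha)=\ell_{\nu^-(\rho^S)}(\alpha)$, where $\rho^S=r_{(M,P)}^S(\rho)\in AH(S)$ and $N_\rho^S$ denotes the cover of $N_\rho$ corresponding to $\pi_1(S)$. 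So it suffices to bound $\ell_{\nu^-(\rho^S)}(\alpha)$ uniformly over $[\rho]\in AH_0(M,P)$, and for this we will apply Theorem~\ref{upper bound} to $\rho^S$. By Theorem~\ref{uniform cores} there is a finite collection of model cores controlling all of $AH_0(M,P)$; we fix $[\rho]$, a representative $\rho\in D(M,P)$, a controlling model core $(\AAA,m,\ep)$, and a model core map $f\colon(M,P)\to(N_\rho\ssm\MT_{\ep}(P),\partial\MT_{\ep}(P))$ with image $C=f(M)$.

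First I would assemble the input for Theorem~\ref{upper bound}. Since $\alpha$ lies outside the window, its free homotopy class is preserved by the remarking homeomorphism $\phi$ of the control condition, so $f(\alpha)$ represents $\rho^S(\alpha)$. Set $F=f(S)\subset\partial C$; as $S$ is incompressible the relevant lift $\hhat F$ of $F$ to $N_\rho^S$ is a level surface, and $f(\alpha)$ lifts to a representative $\hhat\alpha$ of $\alpha$ lying on $\hhat F$, so that $\ell(\hhat\alpha)=\ell_{N_\rho}(f(\alpha))$. If $\alpha\subset M_\AAA$ this is at most $2\ell_m(\alpha)$ by the $2$-bilipschitz condition; if $\alpha$ lies on the boundary of a solid torus component of $M\ssm M_\AAA$ it is a bounded power of the core of that torus, hence of length at most a fixed multiple of $\ep$; and if $\alpha$ lies on a thickened torus component it is parabolic in $\rho^S$ and the conclusion is immediate. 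So in all relevant cases $\ell(\hhat\alpha)\le L_0$ for an $L_0$ depending only on the finite list of model cores and on $\alpha$. Moreover $\hhat F$ avoids the $\ep$-thin part of $N_\rho^S$: by the control condition $F=f(S)$ avoids $(N_\rho)_{(0,\ep)}$, and injectivity radius cannot increase under the covering $N_\rho^S\to N_\rho$. This yields hypotheses~(1) and~(4) of Theorem~\ref{upper bound}.

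The heart of the matter is hypotheses~(2) and~(3): a uniform bound on the number of curves in $\mathcal C(\hhat\alpha,L)$ and on how short they can be, with $L=L_h(S)$. Let $\hhat C=\pi^{-1}(C)\subset N_\rho^S$, which is connected and is the cover of $C\cong M$ associated to $\pi_1(S)$, and which contains $\hhat F$ in its boundary. The step I expect to be the main obstacle is the following topological analysis, to be carried out using the Jaco--Shalen--Johannson structure of $(M,P)$ (Theorem~\ref{charprop}): \emph{every complementary region of $\hhat C$ in $N_\rho^S$ lying below $\hhat\alpha$ is homotopic, in $N_\rho^S$, into a lift of $\mathrm{window}(M,P)$.} Granting this, a curve $\gamma\in\mathcal C(\hhat\alpha,L)$ cannot have its geodesic representative $\gamma^*$ contained in such a region, since that would force $\gamma$ to be homotopic into the window surface $\mathrm{ws}(M,P)\cap S$, hence disjoint from $\partial_{nw}(M,P)$ and in particular from $\alpha$ --- contradicting that $\gamma$ crosses $\alpha$ essentially; so $\gamma^*$ meets $\hhat C$. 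A further case analysis (again using that $\gamma$ is not homotopic into the window, and that the components of $M\ssm M_\AAA$ are solid or thickened tori whose cores do not cross $\alpha$) shows that in fact $\gamma^*$ must meet the geometrically controlled part of $\hhat C$ --- the lift of $f(M_\AAA)$, on which $f$ is $2$-bilipschitz to one of finitely many fixed compact metrics and which avoids $\MT_{2\ep}(f(\AAA))$.

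From this, hypotheses~(2) and~(3) follow. For~(3): the fixed metrics $m$ have injectivity radius bounded below by a uniform constant, so a curve in $\mathcal C(\hhat\alpha,L)$ shorter than a suitable uniform $\ep'\le\ep$ whose geodesic representative met the controlled part of $\hhat C$ would have to run into one of the tubes $\MT_{2\ep}(f(\AAA))$ and hence be a power of a core curve of $\AAA$; but such a curve is homotopic into $\Sigma(M,P)$ and does not cross $\alpha$, so it does not lie in $\mathcal C(\hhat\alpha,L)$ --- and $\hhat F$ still avoids the $\ep'$-thin part. For~(2): a closed geodesic of length at most $L$ meeting the controlled part of $\hhat C$ and standing in the ``not above $\hhat\alpha$'' relation to $\hhat\alpha$ stays within a bounded neighborhood of the compact, uniformly-shaped region near $\hhat F$ in which $\hhat\alpha$ sits, and distinct curves of $\mathcal C(\hhat\alpha,L)$ have distinct geodesic representatives in $N_\rho^S$, so there are at most $R=R(\{m\},S,\alpha)$ of them. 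Feeding $L_0$, $R$, $\ep'$, and $L=L_h(S)$ into Theorem~\ref{upper bound} produces $\ell_{\nu^-(\rho^S)}(\alpha)\le L_1$ with $L_1$ depending only on $(M,P)$, the finite collection of model cores, and $\alpha$; we take $K=L_1$. Beyond the complementary-region claim, the delicate points will be passing from this clean combinatorial picture near $\hhat F$ to the true geodesic representatives in $N_\rho^S$ --- in particular handling curves running along the frontier between the window and non-window parts and curves that dip into Margulis tubes --- together with the bookkeeping forced by the pared setting, where $P$ contributes cusps to $N_\rho^S$ that must be threaded through the level-surface and thin-part arguments.
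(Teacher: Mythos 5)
Your overall route is the same as the paper's: apply Theorem \ref{uniform cores} to get a controlled core $C=f(M)$, lift to the cover $N_\rho^S$, show that bounded-length geodesics crossing $\alpha$ and not lying above $\hhat F$ must meet $\hhat C$ (because complementary regions below $\hhat F$ only carry window curves), push them into the controlled part $f(M_\AAA)$, and feed the resulting bounds into Theorem \ref{upper bound}. Your ``main obstacle'' claim is exactly the paper's key step, proved there by surgering a homotopy from $\beta^*$ to $\hhat F$ into an immersed essential annulus from $f(\beta)$ to $\partial C$ and invoking Theorem \ref{charprop}(4).

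There is, however, a genuine gap in your verification of hypothesis (2) of Theorem \ref{upper bound}. You count the curves of $\mathcal C(\hhat\alpha,L)$ upstairs in $N_\rho^S$, asserting that a geodesic of length at most $L$ meeting the controlled part of $\hhat C$ and ``not above $\hhat\alpha$'' must stay in a bounded neighborhood of a compact region near $\hhat F$. This is unjustified: ``not above'' is a purely homotopical condition and does not confine the geodesic metrically, and the set you are counting against --- the full preimage of $f(M_\AAA)$ in the infinite cover $N_\rho^S$ --- is non-compact, so ``bounded diameter plus bounded length'' gives no finiteness there. The missing idea is to project to $N_\rho$: one first shows that distinct elements of $\mathcal C(\hhat\alpha,L_h)$ project to distinct closed geodesics in $N_\rho$, because two curves of $S$ that are freely homotopic in $M$ cobound an immersed essential annulus, hence are homotopic into the window by Theorem \ref{charprop}(4), hence cannot cross $\alpha$. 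Only then can one count closed geodesics of length at most $L_h$ meeting the genuinely compact, uniformly bounded-diameter set $f(M_\AAA)\subset N_\rho$. (A smaller quibble of the same flavor: your mechanism for hypothesis (3) --- that a very short curve ``would have to run into one of the tubes $\MT_{2\ep}(f(\AAA))$'' --- is not the right reason; the correct one, which you also state, is simply that $f(M_\AAA)$ has injectivity radius bounded below by the finiteness of the list of metrics $m$, so a sufficiently short geodesic cannot meet it at all.)
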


\medskip\noindent
{\bf Proof of Theorem \ref{main gen pared}:}
Let $\alpha$ be a curve in $\boundary_{nw}(M,P)$, and let
$S$ be the component of $\boundary_0(M,P)$ containing $\alpha$.
Fix $r < \mu_3/2$ such that any curve of length $L_h$ intersecting
$\MT_r(\beta)$ is homotopic to a power of $\beta$. 
Applying Theorem \ref{uniform cores} with
$\ep = r$ we see that, given $\rho\in AH_0(M,P)$,  there is a
model core $(\AAA,m,r)$, chosen from a finite list, with an associated
model core map $f$.

Noting that $\alpha$ can be made disjoint from $\AAA$, 
choose a minimal length representative of $\alpha$ in 
\hbox{$\boundary_{0}(M,P)\intersect M_\AAA$} and let $\alpha_f$ be its $f$-image in
$N_\rho$. Its length is bounded by some $L_0$ depending on $\alpha$ and
the finitely many possibilities for $m$.
$\alpha_f$ is contained in
$F = f(S)$ which, by the properties of a model map, is contained
in the $r$-thick part of $N_\rho$. 

Let $\pi:N_S \to N_\rho$ be the cover associated to $\pi_1(S)$ and let
$\hhat F$ be the homeomorphic lift of $F$  to this cover.
Let $\hhat\alpha$ be the lift of $\alpha_f$ to $\hhat F$.
By definition, $\sigma_{(M,P)}^S(\rho)$ is the bottom ending invariant of
$N_S$. 

Let $\mathcal H$ be the invariant system of horoballs for $\rho(\pi_1(M))$ which is the pre-image
of the non-compact components of $N_{(0,r)}$ and let $\hhat{\mathcal H}$ denote the induced subcollection
of invariant horoballs for $\rho(\pi_1(S))$. 
If $C=f(M)$ and $D=f(P)$, then $(C,D)$ is a relative compact
core for $N_\rho^{\mathcal H}$. Moreover, $\hhat F$ is a level surface for $N_S^{\hhat{\mathcal H}}$.

Let $\hhat C$ be the component of $\pi^{-1}(C)$ which contains $\hhat F$ in its boundary.
Then $\hhat C$ lies below $\hhat F$. Suppose that $\beta$ is a curve in $S$ which intersects
$\alpha$ essentially with length $\ell_\rho(\beta)\le L_h$. By our choice of $r$, $\beta^*$ lies in
$N_S^{\hhat{\mathcal H}}$.
We claim  that either $\beta^*$ lies above $\hhat F$ or $\beta^*$ intersects $\hhat C$.

Suppose $\beta^*$ does not lie
above $\hhat F$.
If $\beta^*$ does not intersect $\hhat C$ it must be contained in a component $U$ of 
$N_S\setminus\hhat C$ which lies below $\hhat F$. The component $U$ shares a
boundary component $E$ with $\hhat C$. 
A homotopy of $\beta^*$ to $\hhat F$, intersected with $\hhat C$
and surgered, gives rise to an
immersed essential annulus in $\hhat C$ joining the lift of $f(\beta)$ to a curve in $E$,
and hence to an immersed essential annulus in $C$  joining $f(\beta)$ to a curve in $\partial C$.
Therefore, by Theorem \ref{charprop}(4), $\beta$  is homotopic into the window of $M$, 
so cannot have essential intersection  with $\alpha$. 
This contradiction implies that $\beta^*$ must intersect $\hhat C$.

\realfig{final-step}{In $ N_S$, $\beta_1$ intersects $\alpha$ so
  $\beta_1^*$ must meet $\hhat C$ if it doesn't lie above $\hhat F$. But
  $\beta_2$ is in the window so $\beta_2^*$ is not constrained.}

Any curve that does not lie above $\hhat\alpha$ also does not lie above
$\hhat F$. 
We conclude that the geodesic representative of every curve in $\mathcal C(\hhat\alpha,L_h)$ intersects
$\hhat C$.

Notice that if two (homotopically) distinct curves in $S$ are in the
same  homotopy class in $M$, 
then  there is an immersed essential annulus  in $M$ joining them, so both curves are
homotopic into the window (again by Theorem \ref{charprop}(4)).
It follows that neither curve could intersect $\alpha$. Therefore, any two distinct curves in
$\mathcal C(\hhat\alpha,L_h)$ project to (homotopically) distinct curves in $N_\rho$.

So, the geodesic representatives of curves in $\mathcal C(\hhat\alpha,L_h)$ project
to distinct curves which intersect $C$. 
Our choice of $r$ guarantees that  the geodesic representative of any curve in 
$\mathcal C(\hhat\alpha,L_h)$ cannot intersect $\MT_{r}(f(\AAA))$ (if it did
it would be homotopic into $\Sigma(M,P)$,  again by Theorem \ref{charprop}(4), so would not intersect $\alpha$
essentially), 
so it must intersect $f(M_\AAA)$.
Since  each component of $f(M_\AAA)$
has uniformly bounded diameter,
there exists a uniform bound on the number of geodesics
of length at most $L_h$ which can intersect $f(M_\AAA)$. This bounds
the number of elements of $\mathcal C(\hhat\alpha,L_h)$.
Moreover, there is a lower
bound on the length of the geodesic representative of any curve in $\mathcal C(\hhat\alpha,L_h)$,
since if the closed geodesic is very short, its Margulis tube would have very large radius and would thus
be forced to contain an entire component of $f(M_\AAA)$,
which is impossible.
Finally, note that since $F$ is in the $r$-thick part, so is $\hhat F$. This establishes all the hypotheses of 
Theorem \ref{upper bound}, which therefore gives a uniform upper bound
on $\ell_{\sigma_{M,S}(\rho)}(\alpha)$ and completes the proof.
\qed

\bigskip

The generalization of Corollary \ref{maincor gen} to the pared setting
is the following:

\begin{corollary}{maincor gen pared}
Let $(M,P)$ be a compact, orientable, pared 3-manifold
with incompressible pared boundary.
If $W$ is a component of  $\boundary_{nw} (M,P)$, then
the image of $\sigma_{(M,P)}^W$ is bounded in $\mathcal{F}(W)$.
\end{corollary}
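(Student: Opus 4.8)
The plan is to derive the corollary from the uniform single-curve bounds of Theorem \ref{main gen pared} by translating bounded length functions into bounded Fenchel--Nielsen coordinates. First dispose of the case in which $W$ is an annulus: then $\mathcal{F}(W)=[0,\infty)$, parametrized by the translation length of the core curve $c$ of $W$, and since $c$ is a curve in $\partial_{nw}(M,P)$ the estimate $\ell_{\sigma_{(M,P)}(\rho)}(c)\le K(c)$ of Theorem \ref{main gen pared} is exactly the desired bound (a value $+\infty$ would force $c$ into a geometrically infinite component, which it is not, so $c$ lies in a geometrically finite component or in the parabolic locus and $\ell_{\sigma_{(M,P)}^W(\rho)}(c)=\ell_{\sigma_{(M,P)}(\rho)}(c)$). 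So assume from now on that $W$ has negative Euler characteristic, say genus $g$ with $n$ boundary components, and fix once and for all a pants decomposition $\gamma_1,\dots,\gamma_k$ of $W$, a transversal $\gamma_i'$ for each $\gamma_i$ (a simple closed curve on $W$ meeting $\gamma_i$ essentially and disjoint from $\gamma_j$ for $j\ne i$), and the boundary curves $\delta_1,\dots,\delta_n$ of $W$; these give Fenchel--Nielsen coordinates $(\ell(\gamma_i),\tau(\gamma_i),\ell(\delta_j))$ identifying $\mathcal{F}(W)$ with $\mathbb{R}^{6g-6+2n}\times[0,\infty)^n$.

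The step that needs care is verifying that $\sigma_{(M,P)}^W(\rho)$ is genuinely a point of $\mathcal{F}(W)$, i.e.\ that the restriction to $W$ of the end invariant of $\sigma_{(M,P)}(\rho)$ involves no geometrically infinite piece overlapping $W$ and no non-peripheral parabolic curve of $W$. If a non-peripheral simple closed curve $\gamma$ on $W$ overlapped a geometrically infinite component then $\ell_{\sigma_{(M,P)}(\rho)}(\gamma)=+\infty$, contradicting Theorem \ref{main gen pared}; and if such a $\gamma$ were homotopic into the parabolic locus, then choosing a transversal $\gamma'$ on $W$ (again a curve in $\partial_{nw}(M,P)$, meeting $\gamma$ essentially), the parabolic locus is a multicurve so $\gamma'$ is not homotopic into it, whence $\gamma'$ crosses the cusp of $\gamma$ and $\ell_{\sigma_{(M,P)}(\rho)}(\gamma')=+\infty$, again contradicting Theorem \ref{main gen pared}. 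It follows that $\sigma_{(M,P)}^W(\rho)\in\mathcal{F}(W)$, and unwinding the definition of $\sigma_{(M,P)}^W$ shows that $\ell_{\sigma_{(M,P)}^W(\rho)}(\gamma)=\ell_{\sigma_{(M,P)}(\rho)}(\gamma)$ for every simple closed curve $\gamma$ on $W$. I expect this to be the only genuinely delicate point: Theorem \ref{main gen pared} alone does not forbid an interior curve of $W$ from being parabolic --- that only forces its length to be $0\le K$ --- and it is precisely the argument with a transversal that excludes this.

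With that in hand the conclusion is routine. Theorem \ref{main gen pared} supplies a constant $K$, depending only on $W$ and the finitely many chosen curves, with $\ell_{\sigma_{(M,P)}^W(\rho)}$ bounded by $K$ on each of $\gamma_1,\dots,\gamma_k,\gamma_1',\dots,\gamma_k',\delta_1,\dots,\delta_n$, uniformly over $\rho\in AH_0(M,P)$. The bounds on the $\delta_j$ confine the boundary-length coordinates to the compact set $[0,K]\subset[0,\infty)$. Applying the collar lemma in the hyperbolic surface $\sigma_{(M,P)}^W(\rho)$ gives $\ell(\gamma_i')\ge 2\,i(\gamma_i,\gamma_i')\,w(\ell(\gamma_i))$ with $w(t)\to\infty$ as $t\to 0$; since $\ell(\gamma_i')\le K$ this yields a uniform lower bound $\epsilon_i>0$ for $\ell(\gamma_i)$, while $\ell(\gamma_i)\le K$ gives the matching upper bound. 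Finally, the standard comparison showing that the length of a transversal grows linearly in the absolute value of the twist parameter --- once the length of the twisted curve is pinned in $[\epsilon_i,K]$ --- bounds each $|\tau(\gamma_i)|$. Thus all Fenchel--Nielsen coordinates of $\sigma_{(M,P)}^W(\rho)$ lie in a fixed compact subset of $\mathcal{F}(W)$ independent of $\rho$, which is precisely the assertion that the image of $\sigma_{(M,P)}^W$ is bounded in $\mathcal{F}(W)$.
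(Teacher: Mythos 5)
Your proposal is correct and follows essentially the same route as the paper: the annulus case is read off directly from Theorem \ref{main gen pared}, and in the non-annular case one first checks (using the $+\infty$/transversal conventions for the length function) that $W$ lies in a geometrically finite component so that $\sigma_{(M,P)}^W(\rho)\in\FF(W)$, and then observes that uniform length bounds on a finite filling system of curves confine the structure to a compact subset of $\FF(W)$. The paper simply cites this last compactness fact, whereas you spell it out with Fenchel--Nielsen coordinates and the collar lemma; that is a matter of detail, not of method.
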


\begin{proof}
If $W$ is an annulus then $\FF(W) = [0,\infty)$ and $\sigma_{(M,P)}^W$
  is the map that records the length of the core of $W$ in the
  skinning image. Hence the bound follows immediately from Theorem
  \ref{main gen pared}.

For any non-annular $W$ we note that, since
$\ell_{\sigma_{(M,P)}(\rho)}(\alpha) < \infty$ for each $\alpha$ in $W$,
$W$ must be contained in a geometrically finite component of 
$\sigma_{(M,P)}(\rho)$ (in fact this statement is explicitly
demonstrated in the first step of the proof of Theorem \ref{upper
  bound}). Thus, lifting to the cover associated to $W$ we are able to
define $\sigma_{(M,P)}^W(\rho)\in\FF(W)$.
Now for any $X\in\FF(W)$ an upper
bound on the lengths in $X$ of a filling
system of non-peripheral curves in $W$ restricts $X$ to a compact
subset of $\FF(W)$. Thus again Theorem   \ref{main gen pared} implies
our statement.
\end{proof}

\end{document}